
\documentclass[a4paper,11pt]{amsart}
\usepackage{amsmath,amsthm,amssymb,amsfonts,enumerate,color}
\usepackage[pdftex]{graphicx}

\oddsidemargin = 9pt \evensidemargin = 9pt \textwidth = 440pt


\newcommand{\R}{\mathbb{R}}

\newcommand{\supp}{\operatorname{supp}}

\newcommand{\tr}{\operatorname{tr}}
\newcommand{\x}{\mathbf{x}}
\newcommand{\y}{\mathbf{y}}
\newcommand{\z}{\mathbf{z}}
\newcommand{\w}{\mathbf{w}}
\newcommand{\X}{\mathbf{X}}

\newtheorem{thm}{Theorem}[section]

\newtheorem{lem}[thm]{Lemma}

\theoremstyle{definition}
\newtheorem{defn}[thm]{Definition}
\newtheorem{rem}[thm]{Remark}

\numberwithin{equation}{section}

\allowdisplaybreaks

\author[P. R. Stinga]{Pablo Ra\'ul Stinga}
    \address{Department of Mathematics\\
    Iowa State University\\
    396 Carver Hall, Ames, IA 50011, USA}
    \email{stinga@iastate.edu}

\author[J. L. Torrea]{Jos\'e L. Torrea}
    \address{Departamento de Matem\'aticas\\
    Universidad Aut\'onoma de Madrid\\
    28049 Madrid, Spain}
\email{joseluis.torrea@uam.es}

\keywords{Calder\'on--Zygmund parabolic singular integrals, weak type and $BMO$ estimates. Representation formulas. Parabolic equations, mixed-norm Sobolev and Schauder estimates.
}

\subjclass[2010]{Primary: 42B20, 42B35, 35B65. Secondary: 42B37, 35K10, 35B45}

\thanks{Research supported by Simons Foundation grant 580911 (PRS) and by
grant MTM2015-66157-C2-1-P (MINECO/FEDER) from Government of Spain.}

\begin{document}

\title[Mixed-norm estimates]{H\"older, Sobolev, weak-type and $BMO$ 
estimates \\ in mixed-norm with weights for parabolic equations}

\begin{abstract}
We prove weighted mixed-norm $L^q_t(W^{2,p}_x)$ and $L^q_t(C^{2,\alpha}_x)$
estimates for $1<p,q<\infty$ and $0<\alpha<1$,
weighted mixed weak-type estimates for $q=1$, $L^\infty_{t}(L^p_x)-BMO_t(W^{2,p}_x)$,
and $L^\infty_{t}(C^\alpha_x)-BMO_t(C^{2,\alpha}_x)$, and a.e.~pointwise formulas for derivatives,
for solutions $u=u(t,x)$ to parabolic equations of the form
$$\partial_tu-a^{ij}(t)\partial_{ij}u+u=f\quad t\in\R,~x\in\R^n$$
and for the Cauchy problem
$$\begin{cases}
\partial_tv-a^{ij}(t)\partial_{ij}v+v=f&\hbox{for}~t >0,~x\in\R^n \\
v(0,x)=g&\hbox{for}~x\in\R^n.
\end{cases}$$
The coefficients $a(t)=(a^{ij}(t))$ are just bounded, measurable, symmetric
and uniformly elliptic. Furthermore, we show strong, weak type and $BMO$-Sobolev estimates with
parabolic Muckenhoupt weights. It is quite remarkable that most of our results are new even for the classical heat equation
$$\partial_tu-\Delta u+u=f.$$
\end{abstract}

\maketitle

\section{Introduction}

In this paper we prove weighted mixed-norm $L^q_t(W^{2,p}_x)$ and $L^q_t(C^{2,\alpha}_x)$
estimates for $1<p,q<\infty$ and $0<\alpha<1$,
weighted mixed weak-type estimates for $q=1$, $1<p<\infty$ and $0<\alpha<1$, and weighted $L^\infty_{t}(L^p_x)-BMO_t(W^{2,p}_x)$
and $L^\infty_{t}(C^\alpha_x)-BMO_t(C^{2,\alpha}_x)$ estimates for solutions $u=u(t,x)$ to the parabolic equation
\begin{equation}\label{EQ1}
\partial_tu-a^{ij}(t)\partial_{ij}u+u=f\quad t\in\R,~x\in\R^n
\end{equation}
and solutions $v=v(t,x)$ of the Cauchy problem
\begin{equation}\label{EQ2}
\begin{cases}
\partial_tv-a^{ij}(t)\partial_{ij}v+v=f&\hbox{for}~t >0,~x\in\R^n \\
v(0,x)=g&\hbox{for}~x\in\R^n.
\end{cases}
\end{equation}
The matrix of coefficients $a(t)=(a^{ij}(t))$ is assumed to be just bounded, measurable, symmetric $a^{ij}(t)=a^{ji}(t)$
and uniformly elliptic, that is, there exists $\Lambda>0$ such that
$\Lambda|\xi|^2\leq a^{ij}(t)\xi_i\xi_j\leq\Lambda^{-1}|\xi|^2$,
for all $\xi\in\R^n$, for a.e.~$t\in\R$. We also show strong, weak type and $BMO$-Sobolev estimates with
parabolic weights. Furthermore, we present explicit pointwise formulas
for the derivatives of the solutions.

Our first result is concerned with classical solvability and pointwise formulas
for derivatives of solutions $u$ to \eqref{EQ1}. We define the matrices
$$A_{t,\tau}=(A^{ij}(t,\tau)) =\int_{t-\tau}^t a^{ij}(r)\,dr\quad\hbox{for}~t\in\R,~\tau>0.$$
Then $A_{t,\tau}$ verifies
$\Lambda\tau|\xi|^2\leq A^{ij}(t,\tau)\xi_i\xi_j\leq\Lambda^{-1}\tau|\xi|^2$, for all $t\in\R$, $\tau>0$.
Let
$$B_{t,\tau}=(B^{ij}(t,\tau))=(A_{t,\tau})^{-1}$$
the inverse of $A_{t,\tau}$. Consider the following kernel
\begin{equation}\label{GW1}
p(t,\tau,x):= \chi_{\tau>0}e^{-\tau}\frac{\exp\big(-\tfrac{1}{4}\langle B_{t,\tau}x,x\rangle\big)}{(4\pi)^{n/2} (\det B_{t,\tau})^{-1/2}}
\quad\hbox{for}~t,\tau\in\R,~x\in\R^n.
\end{equation}

\begin{thm}[Classical solvability]\label{fundamental 1}
Let $f=f(t,x)\in L^p(\R^{n+1})$, $1\leq p\leq\infty$, and define
\begin{equation}\label{u}
u(t,x)=\int_{\mathbb{R}^{n+1}}p(t,\tau,y)f(t-\tau,x-y)\,dy\,d\tau.
\end{equation}
Then
$$\|u\|_{L^p(\R^{n+1})}\leq\|f\|_{L^p(\R^{n+1})}.$$
If $f\in C_c^2(\R^{n+1})$ then $u$ is the unique bounded classical solution to \eqref{EQ1}
and the following pointwise limit formulas for its derivatives hold:
\begin{equation}\label{equation2 x}
\partial_{ij}u(t,x)=\lim_{\varepsilon\rightarrow0^+}\int_{\Omega_\varepsilon}
\partial_{y_iy_j}p(t,\tau,y)f(t-\tau,x-y)\,dy\,d\tau-f(t,x)I_{ij}(a)(t)
\end{equation}
and
\begin{equation}\label{equation2 t}
\partial_t u(t,x) =\lim_{\varepsilon\rightarrow0^+}\int_{\Omega_\varepsilon}
(\partial_t+ \partial_\tau) p(t,\tau,y)f(t-\tau,x-y)\,dy\,d\tau+f(t,x)J(a)(t).
\end{equation}
Here $\Omega_\varepsilon= \{(\tau,y)\in\R^{n+1}:\max(|\tau|^{1/2},|y|)>\varepsilon\}$
and, for $i,j=1,\ldots,n$ and $t\in\R$,
\begin{equation}\label{matrizA}
I_{ij}(a)(t)=\int_{\{x: |2 a(t)^{1/2}x|\ge 1\}}  
\frac{e^{-|x|^2}}{\pi^{n/2}}\frac{(a(t)^{-1/2}x)_j (a(t)^{1/2} x)_i}{ |a(t)^{1/2} x|^2}\,dx
\end{equation}
and
\begin{equation}\label{matrizb}
J(a)(t) = \int_{\{x: |2a(t)^{1/2}x|\le 1\}}\frac{e^{-|x|^2}}{\pi^{n/2}}\,dx.
\end{equation}
\end{thm}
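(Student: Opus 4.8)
The plan is to first check that $p(t,\tau,y)$ is, for each fixed $t$, essentially the heat kernel associated to the frozen-coefficient operator $\partial_\tau - a^{ij}(\cdot)\partial_{ij}$ but with the time-integrated diffusion matrix $A_{t,\tau}$ in place of $\tau\,\Id$; concretely, $q(\tau,y):=\int_{\R^n}\chi_{\tau>0}\,(4\pi)^{-n/2}(\det A_{t,\tau})^{-1/2}\exp(-\tfrac14\langle B_{t,\tau}y,y\rangle)$ satisfies $\int_{\R^n}q(\tau,y)\,dy=\chi_{\tau>0}$ for every $\tau$, so $\int_{\R^{n+1}}p(t,\tau,y)\,dy\,d\tau=\int_0^\infty e^{-\tau}\,d\tau=1$. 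This Gaussian normalization, together with Minkowski's integral inequality (Young's inequality for the convolution in the $(t-\tau,x-y)$ variables), immediately gives $\|u\|_{L^p(\R^{n+1})}\le\|f\|_{L^p(\R^{n+1})}$ for all $1\le p\le\infty$; no cancellation is needed here.

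Next, for $f\in C_c^2(\R^{n+1})$ I would establish that $u$ solves \eqref{EQ1}. The key computation is that $p$ satisfies the backward-type identity
$$(\partial_t+\partial_\tau)\,p(t,\tau,y) - a^{ij}(t-\tau)\,\partial_{y_iy_j}\big(e^{\tau}p(t,\tau,y)\big)e^{-\tau} + p(t,\tau,y) = 0 \quad\text{for }\tau>0,$$
which one verifies by differentiating the explicit Gaussian and using $\partial_\tau A_{t,\tau}=a(t-\tau)$ (hence $\partial_\tau B_{t,\tau}=-B_{t,\tau}a(t-\tau)B_{t,\tau}$) and the analogous $t$-derivative $\partial_t A_{t,\tau}=a(t)-a(t-\tau)$. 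Writing $u(t,x)=\int_0^\infty e^{-\tau}\!\int_{\R^n}\big[(4\pi)^{-n/2}(\det A_{t,\tau})^{-1/2}e^{-\frac14\langle B_{t,\tau}y,y\rangle}\big]f(t-\tau,x-y)\,dy\,d\tau$ and differentiating under the integral sign—legitimate away from $\tau=0$ because the $y$-derivatives of the Gaussian decay like $\tau^{-k/2}$ times integrable Gaussians—then combining with the $\tau\to0^+$ behaviour $q(\tau,\cdot)\to\delta_0$, the boundary term from $\partial_\tau$ produces exactly $f(t,x)$ and cancels the inhomogeneity, yielding $\partial_tu-a^{ij}(t)\partial_{ij}u+u=f$. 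Boundedness of $u$ and its derivatives follows from $f\in C_c^2$ and the Gaussian bounds; uniqueness among bounded classical solutions is standard via the maximum principle applied to $\partial_tw-a^{ij}(t)\partial_{ij}w+w=0$ (the zeroth-order term $+w$ makes $e^{-|t|}$-type barriers work, or one appeals to uniqueness for the heat semigroup after a time-change straightening $A_{t,\tau}$).

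The main obstacle—and the heart of the theorem—is the pointwise limit formulas \eqref{equation2 x} and \eqref{equation2 t}. Here one cannot differentiate $\int p\cdot f$ twice under the integral sign because $\partial_{y_iy_j}p(t,\tau,y)$ is not absolutely integrable near $(\tau,y)=(0,0)$; it is a parabolic Calderón–Zygmund kernel with a nonvanishing mean value on parabolic spheres. The strategy is: write $f(t-\tau,x-y)=\big(f(t-\tau,x-y)-f(t,x)\big)+f(t,x)$ inside $\int_{\Omega_\varepsilon}\partial_{y_iy_j}p\,f$; the difference term is handled by the $C^2$ regularity of $f$ (so $|f(t-\tau,x-y)-f(t,x)|\lesssim |y|+|\tau|\lesssim\max(|\tau|^{1/2},|y|)^{?}$, enough against the kernel bound $|\partial_{y_iy_j}p|\lesssim e^{-\tau}\tau^{-(n+2)/2}e^{-c|y|^2/\tau}$) and converges absolutely as $\varepsilon\to0$; the constant term requires computing $\lim_{\varepsilon\to0}f(t,x)\int_{\Omega_\varepsilon}\partial_{y_iy_j}p(t,\tau,y)\,dy\,d\tau$. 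For this I would integrate in $y$ first using the divergence structure $\partial_{y_iy_j}p=\partial_{y_i}(\partial_{y_j}p)$ and the explicit Gaussian: on the region $|y|>\varepsilon$ (with $\tau$ fixed small), the integral $\int_{|y|>\varepsilon}\partial_{y_iy_j}p\,dy$ reduces by the divergence theorem to a surface integral on $\{|y|=\varepsilon\}$ of $\partial_{y_j}p$ times $\nu_i$, which after rescaling $y=\tau^{1/2}a(t)^{1/2}z$-type change of variables (to diagonalize $B_{t,\tau}\approx \tau^{-1}a(t)^{-1}$ as $\tau\to0$) and sending $\varepsilon\to0$ at fixed $\tau$, then $\tau\to0$, produces precisely the matrix $I_{ij}(a)(t)$ in \eqref{matrizA}; the outer $\int_0^\infty e^{-\tau}d\tau=1$ absorbs the exponential. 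The term $\partial_t u$ is entirely analogous, using $(\partial_t+\partial_\tau)p$ in place of $\partial_{y_iy_j}p$: the same splitting applies, the difference term converges absolutely, and the constant piece $f(t,x)\lim_{\varepsilon\to0}\int_{\Omega_\varepsilon}(\partial_t+\partial_\tau)p\,dy\,d\tau$ evaluates—using $\int_{\R^n}(\partial_t+\partial_\tau)q\,dy=0$ away from $\tau=0$ and carefully tracking the $\{|\tau|^{1/2}>\varepsilon\}$ cutoff, i.e. the contribution near $\tau=0$ of the $\delta$-behaviour—to $+f(t,x)J(a)(t)$ with $J(a)$ as in \eqref{matrizb}. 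The delicate points are the interchange of the $\varepsilon\to0$ limit with the $\tau$-integration (justified by dominated convergence using the absolute bound on the difference term plus the explicit formula for the constant term) and the precise bookkeeping of which part of the excised region $\Omega_\varepsilon^c$ contributes in the limit—that is exactly what distinguishes the "corrected" kernel formulas from naive differentiation and what produces the geometrically explicit correction matrices $I_{ij}(a)(t)$ and $J(a)(t)$.
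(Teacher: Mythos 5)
Your $L^p$ bound and your overall plan for the correction matrices (isolate the contribution of the excised region near $\tau=0$, reduce it by the divergence theorem to a surface integral on $\{|y|=\varepsilon\}$, $0<\tau<\varepsilon^2$, rescale and pass to the limit) are in essence the paper's argument. But two points in your write-up are genuine problems. First, your displayed ``backward-type identity'' is false as stated: since $e^{\tau}$ does not depend on $y$, it reads $(\partial_t+\partial_\tau)p-a^{ij}(t-\tau)\partial_{y_iy_j}p+p=0$, whereas from $\partial_tA_{t,\tau}=a(t)-a(t-\tau)$ and $\partial_\tau A_{t,\tau}=a(t-\tau)$ one only gets $\partial_\tau p=a^{ij}(t-\tau)\partial_{ij}p-p$, while the combined identity is
\begin{equation*}
(\partial_t+\partial_\tau)p(t,\tau,y)=a^{ij}(t)\,\partial_{y_iy_j}p(t,\tau,y)-p(t,\tau,y),\qquad \tau>0,
\end{equation*}
which is Lemma \ref{nucleo}$(ii)$ of the paper. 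With your version the leftover term $\big(a^{ij}(t)-a^{ij}(t-\tau)\big)\partial_{ij}p$ does not cancel and, worse, the Duhamel verification cannot close: a coefficient $a^{ij}(t-\tau)$ sitting inside the $\tau$-integral cannot be pulled out to produce $a^{ij}(t)\partial_{ij}u$, so as written your proof that $u$ solves \eqref{EQ1} fails. The fix is exactly the identity above, together with the closing observation $J(a)(t)+a^{ij}(t)I_{ij}(a)(t)=1$ used in the paper.

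Second, your treatment of \eqref{equation2 x}--\eqref{equation2 t} evaluates only their right-hand sides: the splitting $f(t-\tau,x-y)=\big(f(t-\tau,x-y)-f(t,x)\big)+f(t,x)$ shows the truncated limits exist and identifies $\lim_{\varepsilon\to0}\int_{\Omega_\varepsilon}\partial_{y_iy_j}p\,dy\,d\tau$, but you never produce a formula for $\partial_{ij}u(t,x)$ or $\partial_tu(t,x)$ themselves to compare with, so the identities are not yet proved. The missing link is precisely the integration-by-parts bookkeeping the paper carries out: start from $\partial_{ij}u=\iint p\,\partial_{ij}f$ (differentiating under the integral is legitimate here because the derivatives fall on $f$, contrary to your remark) and transfer both derivatives onto $p$, either over all of $\R^n$ at fixed $\tau>0$ using $\int_{\R^n}\partial_{ij}p\,dy=0$, or over $\Omega_\varepsilon$ tracking the boundary pieces as the paper does; only then does the surviving surface term match your ``constant piece'' and yield \eqref{equation2 x}. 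For \eqref{equation2 t} the bookkeeping is more delicate than you indicate, because $\int_{\R^n}(\partial_t+\partial_\tau)p\,dy=-e^{-\tau}\neq0$ (only your undamped Gaussian $q$ has zero mean), so the near-$\tau=0$ analysis must also absorb the Duhamel boundary term $f(t,x)$; this is exactly what produces $+f(t,x)J(a)(t)$, and you leave it at the level of intention. Finally, the scaling limit behind your ``$B_{t,\tau}\approx\tau^{-1}a(t)^{-1}$'' is $\varepsilon^{-2}A_{t,\varepsilon^2\tau}\to\tau\,a(t)$, which holds only at Lebesgue points of the merely measurable $a$, i.e. for a.e.\ $t$, as the paper is careful to state.
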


It is important to stress that, up to our knowledge, this is the first time the terms
\eqref{matrizA} and \eqref{matrizb} are explicitly computed in the formulas for the derivatives \eqref{equation2 x}
and \eqref{equation2 t} for solutions $u$ to a time-dependent coefficients equation like \eqref{EQ1}.
As our next results will show, such representations also hold a.e. and in the corresponding norms in the 
case when $f$ belongs to more general functional spaces.

The natural geometric setting for parabolic equations is the parabolic metric space,
see Remark \ref{rem:parabolic metric space}.
The class of parabolic Muckenhoupt weights $PA_p(\R^{n+1})$
is the suited one for weighted Sobolev estimates.
Observe that $PA_p(\R^{n+1})$ is different from the usual Muckenhoupt $A_p(\R^{n+1})$ class. In our next result we
show weighted parabolic Sobolev estimates for \eqref{EQ1}.

\begin{thm}[Weighted parabolic Sobolev estimates]\label{thm:parabolic weights}
Let $f\in L^p(\mathbb{R}^{n+1},w)$, for some $1\le p<\infty$ and $w\in PA_p(\R^{n+1})$.
Then $u$ defined as in \eqref{u} is in $L^p(\R^{n+1},w)$, with
\begin{equation}\label{eq:uweights}
\|u\|_{L^p(\R^{n+1},w)}\leq C_{n,p,\Lambda,w}\|f\|_{L^p(\R^{n+1},w)}.
\end{equation}
Moreover, the following estimates hold.
\begin{enumerate}[$(1)$]
\item If $1<p<\infty$ then $\partial_{ij}u,\partial_tu\in L^p(\R^{n+1},w)$ and
$$\|\partial_{ij}u\|_{L^p(\R^{n+1},w)}+\|\partial_t u\|_{L^p(\R^{n+1},w)}\le
C_{n,p,\Lambda,w}\|f\|_{L^p(\R^{n+1},w)}.$$
\item If $p=1$ then $\partial_{ij}u,\partial_tu\in\,$weak-$L^1(\R^{n+1},w)$ and, for any $\lambda>0$,
$$w\big(\{(t,x)\in\R^{n+1}:|\partial_{ij}u|+|\partial_tu|>\lambda\}\big)
\le\frac{C_{n,\Lambda,w}}{\lambda}\|f\|_{L^1(\R^{n+1},w)}.$$
\end{enumerate}
In both cases the representation formulas \eqref{equation2 x} and \eqref{equation2 t}
hold true as limits in $L^p(\R^{n+1},w)$ when $1<p<\infty$, in the measure $w(t,x)dtdx$ when $p=1$,
and for a.e. $(t,x)\in\R^{n+1}$.
\end{thm}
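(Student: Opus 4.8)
The plan is to extract from Theorem~\ref{fundamental 1} the linear operators producing $u$ and its derivatives, to recognize the derivative operators as Calder\'on--Zygmund operators on the parabolic space of homogeneous type $(\R^{n+1},\rho,dt\,dx)$, with $\rho(t,x)=\max(\abs{t}^{1/2},\abs{x})$ and homogeneous dimension $n+2$, and then to invoke the weighted Calder\'on--Zygmund theory for the class $PA_p$, which is tailored precisely so that the parabolic maximal and singular operators are bounded on $L^p(w)$. A density argument finally transfers the pointwise identities of Theorem~\ref{fundamental 1} to $L^p(\R^{n+1},w)$, and a bound on the maximal truncation operator upgrades norm convergence to a.e.\ convergence. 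For the estimate \eqref{eq:uweights}, the ellipticity $\Lambda\tau\abs{\xi}^2\le\langle A_{t,\tau}\xi,\xi\rangle\le\Lambda^{-1}\tau\abs{\xi}^2$ forces the eigenvalues of $B_{t,\tau}$ into $[\Lambda\tau^{-1},\Lambda^{-1}\tau^{-1}]$, whence the Gaussian bound $0\le p(t,\tau,y)\le C_{n,\Lambda}\,\chi_{\tau>0}\,e^{-\tau}\tau^{-n/2}e^{-\Lambda\abs{y}^2/(4\tau)}$; for $1<p<\infty$ this dominates $\abs{u}$ pointwise by a forward-in-time parabolic Hardy--Littlewood average of $\abs{f}$, bounded on $L^p(\R^{n+1},w)$ for $w\in PA_p$, while for $p=1$, since $p\ge0$ and $\int_{\R^{n+1}}p(t,\tau,y)\,dy\,d\tau=1$, we get $\int\abs{u}\,w=\int\abs{f}(s,z)\big(\int_{t>s}\int_{\R^n}p(t,t-s,x-z)w(t,x)\,dx\,dt\big)ds\,dz$ and the inner forward average of $w$ is $\le C_w\,w(s,z)$ a.e.\ by the defining (one-sided, forward) property of $PA_1$.

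The heart of the matter is the second space derivatives. By Theorem~\ref{fundamental 1}, for $f\in C_c^2(\R^{n+1})$ we have $\partial_{ij}u=R_{ij}f$, where $R_{ij}f(t,x):=\lim_{\varepsilon\to0^+}\int_{\Omega_\varepsilon}\partial_{y_iy_j}p(t,\tau,y)f(t-\tau,x-y)\,dy\,d\tau-I_{ij}(a)(t)f(t,x)$; away from $\supp f$ this operator is represented by the kernel $K((t,x),(s,z))=\partial_{y_iy_j}p(t,t-s,x-z)$, supported in $\{t>s\}$. I would verify, uniformly in the frozen time $t$, that this is a parabolic Calder\'on--Zygmund kernel: the size bound $\abs{\partial_{y_iy_j}p(t,\tau,y)}\lesssim(\abs{\tau}^{1/2}+\abs{y})^{-(n+2)}$, which is immediate from the eigenvalue bounds and Gaussian decay, and the parabolic H\"ormander smoothness in both entries. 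The last point is delicate --- and I expect it to be the main obstacle of the whole proof --- in the $t$-slot, because $t$ enters $p$ only through $A_{t,\tau}=\int_{t-\tau}^t a(r)\,dr$, which is merely Lipschitz in $t$ with $\norm{\partial_t A_{t,\tau}}\le 2\Lambda^{-1}$, so that $\partial_t B_{t,\tau}=-B_{t,\tau}(\partial_t A_{t,\tau})B_{t,\tau}$ degenerates like $\tau^{-2}$ as $\tau\to0^+$; one must check that on the parabolic annulus $\rho(\tau,y)\sim\rho$ this degeneration is exactly offset by the scaling, so that moving $(t,x)$ by a parabolic distance $h\ll\rho$ changes the kernel by $\lesssim (h/\rho)^{\gamma}\rho^{-(n+2)}$ for some $\gamma>0$. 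One also checks the cancellation: the correction term $I_{ij}(a)(t)$ of \eqref{matrizA} is, up to sign, the limit of the boundary contribution $\int_0^{\varepsilon^2}\int_{\abs{y}=\varepsilon}\partial_{y_i}p(t,\tau,y)\tfrac{y_j}{\varepsilon}\,dS(y)\,d\tau$ arising from integrating by parts (for $\tau\le\varepsilon^2$) to pass from $p\star\partial_{y_iy_j}f$ to $(\partial_{y_iy_j}p)\star f$, and after rescaling $y=2A_{t,\tau}^{1/2}z$ and using $A_{t,\tau}/\tau\to a(t)$ as $\tau\to0^+$ it reduces to \eqref{matrizA}. Finally, $L^2(\R^{n+1})$-boundedness of $R_{ij}$ follows from a partial Fourier transform in $x$, which turns it into a family (indexed by $\xi$) of convolution operators in $t$ with kernels dominated by $\abs{\xi}^2 e^{-\Lambda\abs{\xi}^2(t-s)}\chi_{s<t}$, hence uniformly bounded on $L^2(dt)$ by $\Lambda^{-1}$ (Schur's test), followed by Plancherel in $x$. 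With kernel conditions and one $L^{p_0}$ bound in hand, the weighted Calder\'on--Zygmund theorem on $(\R^{n+1},\rho,dt\,dx)$ gives $\norm{R_{ij}f}_{L^p(\R^{n+1},w)}\le C_{n,p,\Lambda,w}\norm{f}_{L^p(\R^{n+1},w)}$ for $1<p<\infty$ and $w\in PA_p$, the weak-type bound of part~$(2)$ for $w\in PA_1$, and, via Cotlar's inequality, the same bounds for the maximal truncation operator $R_{ij}^{*}$.

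The time derivative needs no new singular integral. From \eqref{EQ1}, $\partial_t u=a^{ij}(t)\partial_{ij}u-u+f$, so the $L^p(\R^{n+1},w)$ (resp.\ weak-$L^1(\R^{n+1},w)$) bound for $\partial_t u$ follows from those for $u$ and $\partial_{ij}u$ and $a^{ij}\in L^\infty$. Likewise, since $p(t,\tau,\cdot)$ is the fundamental solution of $\partial_\tau-a^{ij}(t-\tau)\partial_{ij}+1$, one has $(\partial_t+\partial_\tau)p=a^{ij}(t)\partial_{y_iy_j}p-p$ on $\{\tau>0\}$, and a direct computation (using $a^{ij}(a^{-1/2}x)_j(a^{1/2}x)_i=\abs{a^{1/2}x}^2$) gives $a^{ij}(t)I_{ij}(a)(t)+J(a)(t)=\int_{\R^n}\pi^{-n/2}e^{-\abs{x}^2}\,dx=1$; feeding these into \eqref{equation2 x} (together with $\int_{\Omega_\varepsilon}p(t,\tau,y)f(t-\tau,x-y)\,dy\,d\tau\to u(t,x)$) reproduces \eqref{equation2 t}, so the representation for $\partial_t u$ is a corollary of the one for $\partial_{ij}u$ and of the equation.

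It remains to go from $f\in C_c^2$ to arbitrary $f\in L^p(\R^{n+1},w)$. Since $w\in PA_p$ implies $w\in L^1_{\mathrm{loc}}$ with $w^{-1/(p-1)}\in L^1_{\mathrm{loc}}$ (resp.\ $\operatorname{ess\,inf}_K w>0$ on every compact $K$ when $p=1$), $C_c^2(\R^{n+1})$ is dense in $L^p(\R^{n+1},w)$ and $L^p(\R^{n+1},w)\hookrightarrow L^1_{\mathrm{loc}}(dt\,dx)$. Choosing $f_k\in C_c^2$ with $f_k\to f$ in $L^p(\R^{n+1},w)$, we get $u_k\to u$ in $L^p(\R^{n+1},w)$ by \eqref{eq:uweights}, hence in $\mathcal{D}'(\R^{n+1})$; for $1<p<\infty$, $R_{ij}f_k\to R_{ij}f$ in $L^p(\R^{n+1},w)$ by the second paragraph, hence in $\mathcal{D}'$, and since $R_{ij}f_k=\partial_{ij}u_k$ this identifies $R_{ij}f$ with the distributional --- hence, being in $L^p$, the classical --- derivative $\partial_{ij}u$, and similarly $\partial_t u$ through the equation. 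Convergence of the truncated integrals to $R_{ij}f$ both in $L^p(\R^{n+1},w)$ and for a.e.\ $(t,x)$ follows from the $L^p(\R^{n+1},w)$-bound on $R_{ij}^{*}$ combined with the a.e.\ convergence on the dense class $C_c^2$ provided by Theorem~\ref{fundamental 1}; this yields \eqref{equation2 x} a.e.\ and in $L^p(\R^{n+1},w)$ for every such $f$, and then \eqref{equation2 t} by the third paragraph. For $p=1$, the weak-$L^1(\R^{n+1},w)$ bounds on $R_{ij}$ and $R_{ij}^{*}$ make $\partial_{ij}u_k$ and the truncations Cauchy, hence convergent, in the measure $w(t,x)\,dt\,dx$, and the same identifications and a.e.\ formulas go through. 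Besides the $t$-regularity of the kernel flagged above and the identification of the boundary term with $I_{ij}(a)(t)$, the remaining ingredients --- Calder\'on--Zygmund theory with $PA_p$ weights on a space of homogeneous type, the equation itself, and standard density arguments --- are routine.
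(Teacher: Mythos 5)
Your proposal is correct and, for the core of the theorem, follows the same route as the paper: the bound \eqref{eq:uweights} via domination of $p$ by a radially decreasing, integrable profile in the parabolic metric and the parabolic maximal function $\mathcal{M}$ (with the duality argument $\int|u|w\le C\int|f|\,\mathcal{M}w\le C\int|f|\,w$ when $p=1$), and the second-derivative estimates by verifying that $R_{ij}$ is a scalar parabolic Calder\'on--Zygmund operator --- size and H\"older/H\"ormander smoothness of $K_{ij}$ coming from the Gaussian bounds of Lemma \ref{nucleo}$(iii)$ (your remark that the $\tau^{-2}$ degeneration of $\partial_tB_{t,\tau}$ is offset by the parabolic scaling is exactly what those bounds encode), the $L^\infty$ bound for $I_{ij}(a)$, and $L^2$ boundedness via the partial Fourier transform in $x$ (your Schur test in $t$ is the paper's Cauchy--Schwarz computation) --- followed by the weighted Calder\'on--Zygmund theorem and the maximal truncations for the strong, weak-$(1,1)$ and a.e.\ statements. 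One small caveat: $PA_1(\R^{n+1})$ is the Muckenhoupt class with respect to symmetric parabolic balls, not a one-sided ``forward'' class as your parenthesis suggests; your $p=1$ argument is nevertheless fine because the forward average of $w$ against $p(t,t-s,x-z)$ is dominated by $\mathcal{M}w(s,z)\le C_w\,w(s,z)$.

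The one place you genuinely diverge is $\partial_tu$. The paper treats $f\mapsto\partial_tu$ as a second Calder\'on--Zygmund operator with kernel $(\partial_t+\partial_\tau)p$ (``exactly as above,'' details left to the reader), whereas you derive everything for $\partial_tu$ from the equation $\partial_tu=a^{ij}(t)\partial_{ij}u-u+f$ together with the identities $(\partial_t+\partial_\tau)p=a^{ij}(t)\partial_{y_iy_j}p-p$ (Lemma \ref{nucleo}$(ii)$) and $a^{ij}(t)I_{ij}(a)(t)+J(a)(t)=1$, so that \eqref{equation2 t} and its modes of convergence are transferred from \eqref{equation2 x} and from $\iint_{\Omega_\varepsilon}p\,f\to u$. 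This buys a shorter argument (no second kernel verification), at the price of having to justify the equation for general $f\in L^p(\R^{n+1},w)$, which your density argument supplies for $1<p<\infty$ and which is at the same level of rigor as the paper's own treatment when $p=1$. Both routes are sound.
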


The case $p=\infty$ was left out of Theorem \ref{thm:parabolic weights}.
We address this next, where we obtain a weighted parabolic Sobolev estimate
for the sharp maximal function.
For the notation see Section \ref{Section:eth}, in particular,
\eqref{eq:sharp maximal} for the definition of $M^\#_F$, where $F$ is a Banach space.

\begin{thm}[Weighted parabolic $BMO$ estimate]\label{thm:parabolic-BMO}
Let $u$ be as in \eqref{u}, where $f\in L_c^\infty(\R^{n+1})$. Let
$w=w(t,x)>0$ such that $w^{-1}\in PA_1(\R^{n+1})$ and suppose that $wf\in L^\infty(\R^{n+1})$.
Then the following $BMO$ estimate with weights for $\partial_{ij}u$ and $\partial_tu$ holds:
$$\|w\cdot M_{\R}^\#(\partial_{ij}u)\|_{L^\infty(\R^{n+1})}+
\|w\cdot M^\#_{\R}(\partial_tu)\|_{L^\infty(\R^{n+1})}\leq C_{n,\Lambda,w}\|wf\|_{L^\infty(\R^{n+1})}.$$
\end{thm}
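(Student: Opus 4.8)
The plan is to use the pointwise representation formulas \eqref{equation2 x} and \eqref{equation2 t}, which express $\partial_{ij}u$ and $\partial_tu$ as parabolic Calderón--Zygmund singular integrals of $f$ plus a multiplication operator with uniformly bounded symbol, and then to combine a Fefferman--Stein-type sharp-maximal estimate for such singular integrals with the self-improving (reverse Hölder) property of the parabolic Muckenhoupt class $PA_1$.

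\emph{Step 1 (structure).} Since $f\in L^\infty_c(\R^{n+1})\subset L^p(\R^{n+1})$ for all $1<p<\infty$ and $1\in PA_p$, Theorem \ref{thm:parabolic weights} applied with the weight $\equiv 1$ gives $\partial_{ij}u,\partial_tu\in L^p(\R^{n+1})$ together with the validity of \eqref{equation2 x}, \eqref{equation2 t} for a.e.\ $(t,x)$. Writing $T_{ij}$ and $S$ for the principal-value operators with parabolic kernels $\partial_{y_iy_j}p(t,\tau,y)$ and $(\partial_t+\partial_\tau)p(t,\tau,y)$ from \eqref{GW1}, and $m_{ij}(t)=I_{ij}(a)(t)$, $m_0(t)=J(a)(t)$, this reads
\[
\partial_{ij}u=T_{ij}f-m_{ij}(t)\,f,\qquad \partial_tu=Sf+m_0(t)\,f\qquad\text{a.e.}
\]
From \eqref{matrizA}--\eqref{matrizb} one has $|m_{ij}(t)|\le\Lambda^{-1}$ and $0\le m_0(t)\le 1$ for a.e.\ $t$, so $m_{ij}$ and $m_0$ are harmless bounded multipliers. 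Moreover $T_{ij}$ and $S$ are bounded on $L^p(\R^{n+1})$ for every $1<p<\infty$ (by the above), and their kernels satisfy the parabolic size and Hörmander conditions with constants depending only on $n,\Lambda$ --- this is the Calderón--Zygmund analysis of the earlier sections.

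\emph{Step 2 (multiplicative part and singular part, pointwise).} For any bounded measurable $m=m(t)$ the trivial bound $M^\#_\R(mf)\le 2M(mf)\le 2\|m\|_\infty M(|f|)$, with $M$ the parabolic Hardy--Littlewood maximal operator, together with $f=(wf)w^{-1}$, $wf\in L^\infty$ and $w^{-1}\in PA_1$, gives for a.e.\ $(t_0,x_0)$
\[
w(t_0,x_0)\,M^\#_\R(mf)(t_0,x_0)\le 2\|m\|_\infty\|wf\|_{L^\infty}\,w(t_0,x_0)\,M(w^{-1})(t_0,x_0)\le C_{n,\Lambda,w}\|wf\|_{L^\infty}.
\]
For $T\in\{T_{ij},S\}$ I would establish the Fefferman--Stein pointwise estimate
\[
M^\#_\R(Tf)(t_0,x_0)\le C_{n,\Lambda,s}\big(M(|f|^s)(t_0,x_0)\big)^{1/s},\qquad 1<s<\infty,
\]
in the usual way: on a parabolic cube $Q\ni(t_0,x_0)$ split $f=f\chi_{Q^*}+f\chi_{(Q^*)^c}$ with $Q^*$ a fixed dilate; estimate the oscillation of $T(f\chi_{Q^*})$ over $Q$ by $|Q|^{-1/s}\|T(f\chi_{Q^*})\|_{L^s}\le C\big(\tfrac1{|Q^*|}\int_{Q^*}|f|^s\big)^{1/s}$ using $L^s$-boundedness, and the oscillation of $T(f\chi_{(Q^*)^c})$ around its value at the centre of $Q$ by summing the kernel-smoothness estimate over parabolic annuli, which is $\le C\,M(|f|)(t_0,x_0)$.

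\emph{Step 3 (choice of exponent and conclusion).} The decisive point is to take $s=1+\delta$, where $\delta>0$ is the exponent furnished by the reverse Hölder inequality for weights in $PA_1(\R^{n+1})$ applied to $w^{-1}$; this self-improvement yields $w^{-(1+\delta)}\in PA_1$, hence $M(w^{-s})\le C_{n,\Lambda,w}\,w^{-s}$ a.e. Then, with $|f|^s=|wf|^sw^{-s}$,
\[
w(t_0,x_0)\,M^\#_\R(Tf)(t_0,x_0)\le C\|wf\|_{L^\infty}\,w(t_0,x_0)\big(M(w^{-s})(t_0,x_0)\big)^{1/s}\le C_{n,\Lambda,w}\|wf\|_{L^\infty}.
\]
Combining this with the multiplicative estimate and using subadditivity of $M^\#_\R$ for $\partial_{ij}u=T_{ij}f-m_{ij}f$ and $\partial_tu=Sf+m_0f$ gives the claimed bound. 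The main obstacle is Step 2: one must know that $\partial_{y_iy_j}p$ and $(\partial_t+\partial_\tau)p$ are genuine parabolic Calderón--Zygmund kernels, so that the Fefferman--Stein estimate applies, and one must notice that, because the hypothesis is placed on $w^{-1}$ \emph{at the exponent $1$}, the argument cannot be run at $s=1$ (which would require the false implication ``$w^{-1}\in PA_1\Rightarrow w^{-s}\in PA_1$ for $s>1$'') and must instead be pushed to $s=1+\delta$ through the reverse Hölder property of parabolic $A_1$ weights; by contrast, the lower-order terms are elementary thanks to the uniform bounds on \eqref{matrizA}--\eqref{matrizb}.
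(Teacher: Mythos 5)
Your proposal is correct, but it takes a genuinely different route from the paper. The paper does not argue pointwise through a Fefferman--Stein inequality: it observes that the full operators $R_{ij}$ (singular integral \emph{plus} the bounded multiplier $I_{ij}(a)(t)$) and $f\mapsto\partial_tu$ were already shown, in the proof of Theorem \ref{thm:parabolic weights}, to be Calder\'on--Zygmund operators on the parabolic space of homogeneous type, and then quotes the general weighted $BMO$ result of Theorem \ref{thm:Segovia-Torrea}, proved in the Appendix. That proof fixes a parabolic ball $B$, splits $f=f\chi_{\kappa RB}+f\chi_{(\kappa RB)^c}$, controls the local part by H\"older at an exponent $r$ close to $1$ (reverse H\"older for $w^{-1}\in A_1$) together with the \emph{weighted} $L^{r'}$-boundedness of $T$ for $w^{r'}\in A_{r'}$, and the far part by the kernel smoothness (II.2) summed over annuli. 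You instead peel off the multipliers $I_{ij}(a)$, $J(a)$ by the trivial bound $M^\#_\R(mf)\le 2\|m\|_\infty\,\mathcal{M}f$ and handle the principal-value part via the unweighted sharp-function estimate $M^\#_\R(Tf)\le C_s(\mathcal{M}(|f|^s))^{1/s}$ at $s=1+\delta$ inside the reverse H\"older range, using $w^{-s}\in PA_1$; this needs only unweighted $L^s$-boundedness of $T_{ij}$ and the kernel estimates \eqref{eq:tamanodeKij} and the smoothness bounds already established, so it is more self-contained for this scalar statement. Both arguments ultimately rest on the same self-improvement of $A_1$ via reverse H\"older; what the paper's route buys is a vector-valued theorem on general spaces of homogeneous type that is reused verbatim for the mixed-norm estimate of Theorem \ref{thm:mixed-BMO} with $E=F=L^p(\R^n,\omega)$, whereas your argument as written is tied to $F=\R$ and would have to be redone in vector-valued form there. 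One small inaccuracy in your closing remark: the obstruction to running the argument at $s=1$ is not a weight implication (at $s=1$ you would need exactly $\mathcal{M}(w^{-1})\le Cw^{-1}$, which is the hypothesis) but the failure of the pointwise estimate $M^\#_\R(Tf)\lesssim\mathcal{M}f$ for Calder\'on--Zygmund operators; this does not affect the validity of the steps you actually carry out.
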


We now present our mixed-norm $L^q(\R,\nu;W^{2,p}(\R^n,\omega))$ estimates,
where $\nu$ and $\omega$ are Muckenhoupt $A_q(\R)$ and $A_p(\R^n)$ weights, respectively.

\begin{thm}[Mixed-norm weighted $L^q$--Sobolev estimates]\label{thm:LqSobolev}
Let $f\in L^q(\R,\nu;L^p(\R^{n},\omega))$ for some $1\leq p,q<\infty$, where
$\nu\in A_q(\R)$ and $\omega \in A_p(\R^n)$. Then $u$ defined as in \eqref{u} is in $L^q(\R,\nu;L^p(\R^{n},\omega))$, with
\begin{equation}\label{eq:umixedweights}
\|u\|_{L^q(\R,\nu;L^p(\R^{n},\omega))}\leq C_{n,p,q,\Lambda,\nu,\omega}\|f\|_{L^q(\R,\nu;L^p(\R^{n},\omega))}.
\end{equation}
Moreover, the following estimates hold.
\begin{enumerate}[$(i)$]
\item If $1<p,q<\infty$ then $\partial_{ij}u,\partial_tu\in L^q(\R,\nu;L^{p}(\R^n,\omega))$ and
$$\|\partial_{ij}u\|_{L^q(\R,\nu;L^p(\R^{n},\omega))}+\|\partial_tu\|_{L^q(\R,\nu;L^p(\R^{n},\omega))}
\le C_{n,p,q,\Lambda,\nu,\omega}\|f\|_{L^q(\R,\nu;L^p(\R^{n},\omega))}.$$
In this case, the representation formulas \eqref{equation2 x} and \eqref{equation2 t}
hold true as limits in the norm of $L^q(\R,\nu;L^p(\R^{n},\omega))$ and for a.e.~$(t,x)\in\R^{n+1}$.
\item If $q=1$ and $1 < p<\infty$ then $\partial_{ij}u,\partial_tu\in\,$weak-$L^1(\R,\nu;L^p(\R^n,\omega))$
and, for any $\lambda>0$,
$$\nu\big( \{t \in\R:\|\partial_{ij}u(t,\cdot)\|_{L^p(\R^{n},\omega)}+\|\partial_{t}u(t,\cdot)\|_{L^p(\R^{n},\omega)}
>\lambda\}\big)\le\frac{C_{n,p,\Lambda,\nu,\omega}}{\lambda}\|f\|_{L^1(\R,\nu;L^p(\R^{n},\omega))}.$$
In this case, the representation formulas \eqref{equation2 x} and \eqref{equation2 t}
hold true as limits in the measure $\nu(t)dt$ and in the
norm of $L^p(\R^n,\omega)$, and for a.e.~$(t,x)\in\R^{n+1}$.
\end{enumerate}
\end{thm}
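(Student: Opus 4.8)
The plan is to deduce the mixed-norm statement from the previously established pointwise/operator-theoretic facts, treating the $t$-variable and the $x$-variable separately and invoking vector-valued Calder\'on--Zygmund theory. The starting point is the observation that, by Theorem \ref{fundamental 1}, for fixed $t$ the spatial derivatives $\partial_{ij}u(t,\cdot)$ are obtained from $f(t-\cdot,\cdot)$ by integrating against a Gaussian-type kernel convolved in $x$ plus the explicit multiple $-f(t,x)I_{ij}(a)(t)$ of the identity; the operator $g\mapsto \PV\!\int \partial_{y_iy_j}p(t,\tau,y)\,g(x-y)\,dy$ is, for each frozen $(t,\tau)$, a classical singular integral in $x$ whose kernel and its derivatives satisfy size and smoothness bounds uniform in $(t,\tau)$ after the natural parabolic rescaling $y\mapsto \tau^{1/2}A_{t,\tau/\tau}^{\,\cdot}$ (using only the ellipticity constant $\Lambda$). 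First I would record, as in the scalar theory underlying Theorems \ref{thm:parabolic weights} and \ref{thm:parabolic-BMO}, that the map $f\mapsto \partial_{ij}u$ is a parabolic Calder\'on--Zygmund operator on $\R^{n+1}$ with the parabolic metric (Remark \ref{rem:parabolic metric space}); the boundedness $L^p(\R^{n+1},w)\to L^p(\R^{n+1},w)$ for $w\in PA_p$ is exactly Theorem \ref{thm:parabolic weights}, and its weak-$(1,1)$ analogue with $PA_1$ weights is also in that theorem.

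The mixed-norm upgrade then proceeds by a two-step vector-valued extrapolation/iteration. Step one: fix the $t$-variable and view everything as operators acting on $\R^n$-valued functions. Because the spatial kernel $\partial_{y_iy_j}p(t,\tau,\cdot)$, together with the identity term, defines an operator bounded on $L^p(\R^n,\omega)$ for every $\omega\in A_p(\R^n)$ with norm depending only on $n,p,\Lambda,[\omega]_{A_p}$ (uniformly in $(t,\tau)$), the full map $f(t,\cdot)\mapsto\partial_{ij}u(t,\cdot)$, realized through the $\tau$-integral, is an $L^p(\R^n,\omega)$-valued singular integral in the $t$-variable on $\R$. Concretely, set $X=L^p(\R^n,\omega)$ and check that $T\colon f\mapsto \partial_{ij}u$ has an $X$-valued kernel $K(t,s)$ (an operator-valued convolution kernel in $t-s$, built from $p$ and its $y$-derivatives) satisfying the H\"ormander condition in the operator norm $\mathcal{L}(X)$; this reduces to the parabolic size/smoothness estimates for $p$ already used for the scalar result. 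Step two: apply the weighted vector-valued Calder\'on--Zygmund theorem of Rubio de Francia--Ruiz--Torrea (operator-valued kernels, $A_q(\R)$ weights) to obtain, for $1<q<\infty$ and $\nu\in A_q(\R)$, the bound $\|Tf\|_{L^q(\R,\nu;X)}\le C\,\|f\|_{L^q(\R,\nu;X)}$, which is case $(i)$; the same theorem gives the weak-$(1,1)$ endpoint in $t$, i.e.\ $\nu(\{t:\|Tf(t,\cdot)\|_X>\lambda\})\le C\lambda^{-1}\|f\|_{L^1(\R,\nu;X)}$, which is case $(ii)$. The $L^q(\R,\nu;L^p(\R^n,\omega))$ bound for $u$ itself (inequality \eqref{eq:umixedweights}) is easier: $u(t,\cdot)=e^{-\tau}$-weighted Gaussian convolutions integrated against $f(t-\tau,\cdot)$, so pointwise $\|u(t,\cdot)\|_{L^p(\R^n,\omega)}\le \int_{\R}\chi_{\tau>0}e^{-\tau}\|f(t-\tau,\cdot)\|_{L^p(\R^n,\omega)}\,d\tau$ by Minkowski and translation-invariance of the $\omega$-norm, and then convolution with $\chi_{\tau>0}e^{-\tau}\in L^1(\R)$ is bounded on $L^q(\R,\nu)$ for $\nu\in A_q$ (indeed for all $1\le q<\infty$), giving \eqref{eq:umixedweights}.

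For the time derivative $\partial_t u$ one argues identically, replacing $\partial_{y_iy_j}p$ by $(\partial_t+\partial_\tau)p$ and the identity multiple $I_{ij}(a)(t)$ by $+J(a)(t)$, as in \eqref{equation2 t}; the parabolic homogeneity of $(\partial_t+\partial_\tau)p$ matches that of $\partial_{ij}p$, so the same kernel estimates apply. Finally, the convergence of the truncated integrals over $\Omega_\varepsilon$ to the stated limits: by Theorem \ref{fundamental 1} the formulas \eqref{equation2 x}, \eqref{equation2 t} hold pointwise for $f\in C^2_c$, and for such $f$ the truncations are uniformly controlled by the maximal singular integral, which inherits weighted bounds from the Calder\'on--Zygmund structure (both in $L^p(\R^{n+1},w)$ and, after step two, in the mixed norm); a standard density argument—$C^2_c$ dense in $L^q(\R,\nu;L^p(\R^n,\omega))$ for $1\le q<\infty$ when $\nu\in A_q,\omega\in A_p$—then transfers norm-convergence to general $f$, and passing to a subsequence yields a.e.\ convergence. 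The main obstacle I anticipate is packaging the $(t,\tau)$-family of spatial operators into a single operator-valued kernel in $t$ with the correct uniform H\"ormander estimate: one must verify that after integrating out $\tau$ the resulting $\mathcal{L}(L^p(\R^n,\omega))$-valued kernel in $t-s$ genuinely satisfies the smoothness condition, which requires differentiating the Gaussian $p(t,\tau,y)$ in $t$ through the matrix $B_{t,\tau}$ (whose entries are only Lipschitz in $t$ via $A_{t,\tau}$, but that is exactly the regularity the H\"ormander condition needs) and controlling $\int \|\partial_t K(t,s)\|_{\mathcal L(X)}\,dt$ over $|t-s|>2|s-s'|$ uniformly in $\omega$—this is where the bulk of the technical estimates (already in hand for the scalar parabolic case) get promoted to the vector-valued level.
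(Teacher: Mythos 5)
Your treatment of the derivative estimates $(i)$--$(ii)$ is essentially the paper's own proof: freeze the $x$-variable into $E=L^p(\R^n,\omega)$, use the already-proved parabolic weighted bound of Theorem \ref{thm:parabolic weights} (with the weight $\omega(x)$, or more generally a tensor weight $\nu(t)\omega(x)\in PA_p(\R^{n+1})$) as the initial $L^{p_0}$ boundedness on $L^p(\R;E)$, verify that the operator-valued kernel $\mathrm{K}_{ij}(t,\tau)$ satisfies $\|\mathrm{K}_{ij}(t,\tau)\|\le C|t-\tau|^{-1}$ and $\|\partial_t\mathrm{K}_{ij}\|+\|\partial_\tau\mathrm{K}_{ij}\|\le C|t-\tau|^{-2}$ in $\mathcal{L}(E)$, and invoke the weighted vector-valued Calder\'on--Zygmund theorem on $(\R,|\cdot|,dt)$ with $A_q(\R)$ weights, plus maximal truncations and density of nice functions for the a.e.\ and norm convergence of \eqref{equation2 x}--\eqref{equation2 t}. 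The obstacle you flag (the operator-norm H\"ormander estimate in $t$) is resolved in the paper exactly as you anticipate, with one useful simplification you may want to adopt: the Gaussian bounds \eqref{eq:tamanodeKij} and \eqref{eq:tamanioderivadasent} let one recognize the action of $\mathrm{K}_{ij}(t,\tau)$ and its $t,\tau$-derivatives as multiples of the heat semigroup $e^{c(t-\tau)\Delta}$, whose $L^p(\R^n,\omega)$-operator norm is bounded uniformly in the time parameter, so no fresh differentiation of $B_{t,\tau}$ is needed at the vector-valued level.

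There is, however, a genuine flaw in your proof of \eqref{eq:umixedweights}: the inequality $\|u(t,\cdot)\|_{L^p(\R^n,\omega)}\le\int\chi_{\tau>0}e^{-\tau}\|f(t-\tau,\cdot)\|_{L^p(\R^n,\omega)}\,d\tau$ does not follow from ``Minkowski and translation-invariance of the $\omega$-norm,'' because weighted norms are not translation invariant: for $\omega\in A_p(\R^n)$ the ratio $\|f(\cdot-y)\|_{L^p(\omega)}/\|f\|_{L^p(\omega)}$ can blow up as $|y|\to\infty$ (take $\omega(x)=(1+|x|)^a$). The same misconception underlies the claim that convolution with $\chi_{\tau>0}e^{-\tau}\in L^1(\R)$ is automatically bounded on $L^q(\R,\nu)$ for $\nu\in A_q(\R)$; membership in $L^1$ alone does not suffice in weighted spaces. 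The correct route, which is the paper's, is to dominate: the spatial Gaussian convolution by $M_{\R^n}$ (bounded on $L^p(\R^n,\omega)$ for $p>1$), giving $\|u(t,\cdot)\|_{L^p(\omega)}\le C\,M_{\R}\big[\|f(\cdot,\cdot)\|_{L^p(\omega)}\big](t)$, and then $M_\R$ on $L^q(\R,\nu)$ for $q>1$; the endpoints $q=1$ (and $p=q=1$) are handled not by maximal boundedness but by Fubini together with $M_\R\nu\le C\nu$ (resp.\ $M_{\R^n}\omega\le C\omega$) for $A_1$ weights. Your statement is true, but as written the step would fail; with the maximal-function/Fubini substitution the rest of your argument goes through as in the paper.
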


Notice that Theorem \ref{thm:LqSobolev} is neither a particular case
of Theorem \ref{thm:parabolic weights} nor implies it. For the endpoint case $q=\infty$ we have the following
estimates. Recall the notation in \eqref{eq:sharp maximal}.

\begin{thm}[Mixed-norm weighted $L^\infty$-$BMO$ Sobolev estimate]\label{thm:mixed-BMO}
Let $u$ be as in \eqref{u}, where $f\in L_c^\infty(\R;L^p(\R^n,\omega))$, for some $1<p<\infty$
and $\omega\in A_p(\R^n)$. Let $\nu=\nu(t)>0$ such that $\nu^{-1}\in A_1(\R)$ and suppose that
$\nu f\in L^\infty(\R;L^p(\R^n,\omega))$. Then the following $L^p$-$BMO$ mixed-norm
weighted estimate for $\partial_{ij}u$ and $\partial_tu$ holds:
$$\|\nu\cdot M_{L^p(\R^n,\omega)}^\#(\partial_{ij}u)\|_{L^\infty(\R)}
+\|\nu\cdot M_{L^p(\R^n,\omega)}^\#(\partial_tu)\|_{L^\infty(\R)}
\leq C_{n,p,\Lambda,\omega,\nu}\|\nu f\|_{L^\infty(\R;L^p(\R^n,\omega))}.$$
\end{thm}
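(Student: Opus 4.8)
The strategy will parallel the proof of Theorem~\ref{thm:parabolic-BMO}, now with the Banach space $F:=L^p(\R^n,\omega)$ in the role of the scalar field and the Euclidean time line $\R$ in the role of the parabolic space. The first step is to realize $\partial_{ij}u$ and $\partial_tu$ as $F$-valued singular integrals in $t$. Applying the substitution $s=t-\tau$ in \eqref{u}, the pointwise formulas \eqref{equation2 x} and \eqref{equation2 t} --- which are available for $f\in L^\infty_c(\R;F)\subset\bigcap_{1<q<\infty}L^q(\R;F)$ by Theorem~\ref{thm:LqSobolev}$(i)$ --- become
\[
\partial_{ij}u(t,\cdot)=(T^{ij}f)(t,\cdot)-I_{ij}(a)(t)\,f(t,\cdot),\qquad \partial_tu(t,\cdot)=(Sf)(t,\cdot)+J(a)(t)\,f(t,\cdot),
\]
where $T^{ij}$ and $S$ are the principal-value convolution operators in $t$ whose kernels $\mathcal{K}(t,s)\in\mathcal{L}(F)$ (the bounded linear operators on $F$) are convolution in $x$ against $\partial_{y_iy_j}p(t,t-s,\cdot)$, respectively $(\partial_t+\partial_\tau)p(t,t-s,\cdot)$, and where $\|I_{ij}(a)\|_{L^\infty(\R)}+\|J(a)\|_{L^\infty(\R)}\le C_{n,\Lambda}$ by the explicit expressions \eqref{matrizA}--\eqref{matrizb}. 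It thus suffices to bound $\|\nu\cdot M_F^\#(\cdot)\|_{L^\infty(\R)}$ for $T^{ij}f$, for $Sf$, and for the zero-order terms $t\mapsto m(t)f(t,\cdot)$ with $m$ measurable and $\|m\|_{L^\infty(\R)}\le C_{n,\Lambda}$.

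The main step is a Fefferman--Stein pointwise estimate for these operators. One first verifies that the kernel $\mathcal{K}$ above is a Calder\'on--Zygmund kernel on $\R$ valued in $\mathcal{L}(F)$: $\|\mathcal{K}(t,s)\|_{\mathcal{L}(F)}\le C\abs{t-s}^{-1}$ and $\|\partial_t\mathcal{K}(t,s)\|_{\mathcal{L}(F)}+\|\partial_s\mathcal{K}(t,s)\|_{\mathcal{L}(F)}\le C\abs{t-s}^{-2}$, with $C=C_{n,p,\Lambda,\omega}$. This rests on the Gaussian pointwise bounds already exploited in the earlier sections --- $\partial_{y_iy_j}p$, $(\partial_t+\partial_\tau)p$, and their first derivatives in $t$ are, as functions of $y$, dominated by $\tau^{-1-k}$ ($k=0,1$) times an $L^1(dy)$-normalized Gaussian at scale $\tau^{1/2}=\abs{t-s}^{1/2}$, the $t$-differentiation producing only the bounded matrix $a(t)-a(t-\tau)=\partial_tA_{t,\tau}$ through $\partial_tB_{t,\tau}=-B_{t,\tau}(\partial_tA_{t,\tau})B_{t,\tau}$ --- together with the fact that $x$-convolution against such a Gaussian is pointwise $\le CM_x$, the Hardy--Littlewood maximal operator in $x$, hence bounded on $F$ uniformly in the parameters because $\omega\in A_p(\R^n)$. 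Granting this, fix $q_0\in(1,\infty)$, $g\in L^\infty_c(\R;F)$, $\theta\in\R$, and an interval $I\ni\theta$, and split $g=g\chi_{3I}+g\chi_{(3I)^c}$. The local part is controlled by the $L^{q_0}(\R;F)$-boundedness of $T^{ij}$ and $S$ --- which is precisely Theorem~\ref{thm:LqSobolev}$(i)$ with $q=q_0$ and $\nu\equiv1$, after absorbing the bounded multipliers $I_{ij}(a),J(a)$ --- yielding the bound $C(M(\|g(\cdot,\cdot)\|_F^{q_0})(\theta))^{1/q_0}$, where $M$ is the Hardy--Littlewood maximal operator on $\R$; the far part is controlled, after subtracting the $F$-constant $(Tg\chi_{(3I)^c})(t_0,\cdot)$ with $t_0$ the center of $I$, by the kernel regularity and a geometric sum over the dyadic annuli about $t_0$, yielding the bound $CM(\|g(\cdot,\cdot)\|_F)(\theta)$. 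Combining this with the trivial estimate $M_F^\#\big(m(\cdot)f(\cdot,\cdot)\big)(\theta)\le 2\|m\|_\infty M(\|f(\cdot,\cdot)\|_F)(\theta)$ for the zero-order terms and with Jensen's inequality $M(\|f\|_F)\le(M(\|f\|_F^{q_0}))^{1/q_0}$, one obtains
\[
M_F^\#(\partial_{ij}u)(\theta)+M_F^\#(\partial_tu)(\theta)\le C_{n,p,q_0,\Lambda,\omega}\,\big(M(\|f(\cdot,\cdot)\|_F^{q_0})(\theta)\big)^{1/q_0}\qquad\text{for a.e.~}\theta\in\R.
\]

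It remains to pick $q_0$ adapted to $\nu$ and conclude. Since $\nu^{-1}\in A_1(\R)$, the reverse H\"older inequality furnishes some $q_0=q_0(\nu)\in(1,\infty)$ with $\nu^{-1}\in RH_{q_0}(\R)$, whence $\nu^{-q_0}=(\nu^{-1})^{q_0}\in A_1(\R)$ with a constant depending only on $\nu$; running the previous step with this particular $q_0$ is legitimate because $T^{ij}$ and $S$ are bounded on $L^{q_0}(\R;F)$ for \emph{every} $q_0\in(1,\infty)$. From the hypothesis $\nu(t)\|f(t,\cdot)\|_F\le C_1:=\|\nu f\|_{L^\infty(\R;F)}$ for a.e.~$t$ we get $\|f(t,\cdot)\|_F^{q_0}\le C_1^{q_0}\nu(t)^{-q_0}$, hence
\[
M\big(\|f(\cdot,\cdot)\|_F^{q_0}\big)(\theta)\le C_1^{q_0}\,M(\nu^{-q_0})(\theta)\le C\,C_1^{q_0}\,\nu(\theta)^{-q_0}\qquad\text{for a.e.~}\theta,
\]
using $\nu^{-q_0}\in A_1(\R)$. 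Multiplying the previous displayed pointwise bound by $\nu(\theta)$ and taking the supremum over $\theta$ yields $\|\nu\cdot M_F^\#(\partial_{ij}u)\|_{L^\infty(\R)}+\|\nu\cdot M_F^\#(\partial_tu)\|_{L^\infty(\R)}\le C_{n,p,\Lambda,\omega,\nu}\|\nu f\|_{L^\infty(\R;F)}$, which is the claim. I expect the principal obstacle to lie in the second step: checking rigorously that $\partial_{ij}u$ and $\partial_tu$ fit the operator-valued Calder\'on--Zygmund framework on the time line under the merely bounded, measurable hypothesis on $a(t)$ --- that is, producing the Gaussian pointwise estimates for the $t$-derivatives of the kernel $p$ and the uniform $L^p(\R^n,\omega)$-mapping properties of the associated $x$-convolutions. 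Once that is established, the dyadic splitting and the $A_1$-power manipulation are routine.
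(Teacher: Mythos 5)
Your argument is correct, and its first half coincides with the paper's: like the paper, you realize $\partial_{ij}u$ and $\partial_tu$ as operator-valued Calder\'on--Zygmund singular integrals on the time line with $E=F=L^p(\R^n,\omega)$, with the kernel bounds $\|\mathrm{K}_{ij}(t,\tau)\|\le C|t-\tau|^{-1}$ and $\|\partial_t\mathrm{K}_{ij}(t,\tau)\|+\|\partial_\tau\mathrm{K}_{ij}(t,\tau)\|\le C|t-\tau|^{-2}$; these are exactly what the proof of Theorem \ref{thm:LqSobolev} establishes from Lemma \ref{nucleo}$(iii)$, \eqref{eq:tamanodeKij}, \eqref{eq:tamanioderivadasent} and the uniform $L^p(\R^n,\omega)$-boundedness of the Gaussian convolutions, so the obstacle you flag at the end is precisely the part the paper has already carried out. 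Where you genuinely diverge is the endgame: the paper simply invokes Theorem \ref{thm:Segovia-Torrea} (proved in the Appendix for general vector-valued Calder\'on--Zygmund operators on spaces of homogeneous type) with $\X=\R$, whereas you prove the unweighted Fefferman--Stein pointwise bound $M_F^\#(Tf)\le C\big(M(\|f\|_F^{q_0})\big)^{1/q_0}$ by the local/far splitting, and then transfer the weight through the datum via $\|f(t,\cdot)\|_F\le\|\nu f\|_{L^\infty(\R;F)}\,\nu(t)^{-1}$ and $\nu^{-q_0}\in A_1(\R)$ (reverse H\"older for $\nu^{-1}\in A_1(\R)$), so that $M(\|f\|_F^{q_0})\le C\|\nu f\|_{L^\infty(\R;F)}^{q_0}\,\nu^{-q_0}$ a.e. Both routes hinge on the same reverse H\"older step (compare \eqref{eq:reverse Holder}--\eqref{eq:Bloom}), but they deploy it differently: the paper pushes the weight into the operator, using weighted $L^{r'}(\X,\nu^{r'};F)$ boundedness for the local piece and $A_1$ averages over annuli for the far piece, while you keep all Calder\'on--Zygmund estimates unweighted (only the $L^{q_0}(\R;F)$ boundedness from Theorem \ref{thm:LqSobolev}$(i)$ with $\nu\equiv1$ is needed) and let the weight act on $f$ only at the last step. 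Your version is more elementary and self-contained for this particular statement; the paper's route produces a reusable general theorem that also yields Theorem \ref{thm:parabolic-BMO} on the parabolic space, where your one-dimensional interval argument would have to be repeated with parabolic balls.
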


Our third set of main results regards mixed-norm $L^q(\R,\nu;C^{2,\alpha}(\R^n))$ estimates,
where $\nu\in A_q(\R)$ if $1\leq q<\infty$, $\nu\equiv1$ if $q=\infty$, and $0<\alpha<1$.

\begin{thm}[Mixed-norm $L^q$--H\"older estimates]\label{thm:Holder}
Let $f\in L^q(\R,\nu;C^{\alpha}(\R^{n}))$ for some $1\leq q\leq\infty$,
where $\nu\in A_q(\R)$ if $1\leq q<\infty$, $\nu\equiv1$ if $q=\infty$, and $0<\alpha<1$.
Then $u$ defined as in \eqref{u} is in $L^q(\R,\nu;C^{\alpha}(\R^{n}))$, with
$$\|u\|_{L^q(\R,\nu;C^{\alpha}(\R^{n}))}\leq C_{n,q,\alpha,\Lambda,\nu}\|f\|_{L^q(\R,\nu;C^{\alpha}(\R^{n}))}.$$
Moreover, the following estimates hold.
\begin{enumerate}[$(i)$]
\item If $1<q\leq\infty$ then $\partial_{ij}u,\partial_tu\in L^q(\R,\nu;C^{\alpha}(\R^n))$ and
$$\|\partial_{ij}u\|_{L^q(\R,\nu;C^{\alpha}(\R^{n}))}+\|\partial_tu\|_{L^q(\R,\nu;C^{\alpha}(\R^{n}))}
\le C_{n,q,\alpha,\Lambda,\nu}\|f\|_{L^q(\R,\nu;C^{\alpha}(\R^{n}))}.$$
In this case, the representation formulas \eqref{equation2 x} and \eqref{equation2 t}
hold true as limits in the norm of $L^q(\R,\nu;C^{\alpha}(\R^{n}))$, and for a.e.~$t\in\R$ and
uniformly in $x\in\R^{n}$.
\item If $q=1$ then $\partial_{ij}u,\partial_tu\in\,$weak-$L^1(\R,\nu;C^{\alpha}(\R^n))$
and, for any $\lambda>0$,
$$\nu\big( \{t \in\R:[\partial_{ij}u(t,\cdot)]_{C^\alpha(\R^{n})}+[\partial_{t}u(t,\cdot)]_{C^\alpha(\R^{n})}>\lambda\}\big)
\le\frac{C_{n,\alpha,\Lambda,\nu}}{\lambda}\|f\|_{L^1(\R,\nu;C^{\alpha}(\R^{n}))}.$$
In this case, the representation formulas \eqref{equation2 x} and \eqref{equation2 t}
hold true as limits in the measure $\nu(t)dt$ and in 
the norm in $C^{\alpha}(\R^n)$, and for a.e.~$t\in\R$ and uniformly in $x\in\R^{n}$.
\end{enumerate}
\end{thm}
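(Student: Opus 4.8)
The plan is to recognise $\partial_{ij}u$ and $\partial_t u$ as Calder\'on--Zygmund singular integrals in the single time variable $t$, with kernels valued in the bounded operators on $C^{0,\alpha}(\R^n)$, and then to run vector-valued Calder\'on--Zygmund theory together with extrapolation on the real line. Write $P_{t,\tau}$ for convolution on $\R^n$ against the Gaussian in \eqref{GW1} (covariance proportional to $A_{t,\tau}$), so that $u(t,\cdot)=\int_0^\infty e^{-\tau}P_{t,\tau}f(t-\tau,\cdot)\,d\tau$ and, by Theorem~\ref{fundamental 1}, $\partial_{ij}u(t,\cdot)=\PV\int_0^\infty e^{-\tau}(\partial_{ij}P_{t,\tau})f(t-\tau,\cdot)\,d\tau-I_{ij}(a)(t)\,f(t,\cdot)$; moreover, since $(\partial_t+\partial_\tau)p=a^{ij}(t)\,\partial_{y_iy_j}p-p$ for $\tau>0$, the estimates for $\partial_t u$ reduce at the level of kernels to those for $\partial_{ij}u$ and $u$, the coefficients $a^{ij}(t)$ being bounded. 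The estimate for $u$ itself is the easy part: $P_{t,\tau}$ is an $L^1$-normalised convolution, hence a contraction on $C^{0,\alpha}(\R^n)$, so $\|u(t,\cdot)\|_{C^{0,\alpha}}\le\int_0^\infty e^{-\tau}\|f(t-\tau,\cdot)\|_{C^{0,\alpha}}\,d\tau$, and convolution in $t$ against $e^{-\tau}\chi_{\tau>0}\in L^1(\R)$ is dominated by the Hardy--Littlewood maximal function, hence bounded on $L^q(\R,\nu)$ for $\nu\in A_q(\R)$, $1<q<\infty$; the endpoints $q=1$, $\nu\in A_1$ (via $\int_0^\infty e^{-\tau}\nu(t+\tau)\,d\tau\le C\nu(t)$) and $q=\infty$, $\nu\equiv1$ are elementary.

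The first substantial step is the spatial (``diagonal'') estimate, carried out by elementary Gaussian computations: for each $\tau>0$ the operator $\partial_{ij}P_{t,\tau}$ is bounded on $C^{0,\alpha}(\R^n)$ with $\|\partial_{ij}P_{t,\tau}\|_{C^{0,\alpha}\to C^{0,\alpha}}\le C_{n,\alpha,\Lambda}\,\tau^{-1}$ uniformly in $t$ (immediate from $\|\partial_{y_iy_j}p(t,\tau,\cdot)\|_{L^1(\R^n)}\le C\tau^{-1}$), while the Lipschitz dependence of $A_{t,\tau}=\int_{t-\tau}^t a$ on $t$ (which holds although $a$ is merely measurable) and the bound $\|\partial_\tau\partial_{ij}P_{t,\tau}\|_{C^{0,\alpha}\to C^{0,\alpha}}\le C\tau^{-2}$ give the regularity $\|\partial_{ij}P_{t,\tau}-\partial_{ij}P_{t',\tau}\|_{C^{0,\alpha}\to C^{0,\alpha}}\le C|t-t'|\,\tau^{-2}$ for $|t-t'|\le\tau$. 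Summing, the operator-valued kernel $\mathbf K(t,\tau)=e^{-\tau}\chi_{\tau>0}\,\partial_{ij}P_{t,\tau}$ of the operator $\mathcal T_{ij}\colon f\mapsto\partial_{ij}u$ satisfies the H\"ormander conditions in both slots, so it is a Calder\'on--Zygmund kernel on $(\R,|\cdot|,dt)$ valued in the bounded operators on $C^{0,\alpha}(\R^n)$. The correction term contributes only the pointwise multiplier $f\mapsto I_{ij}(a)(t)f(t,\cdot)$, which is harmless because $I_{ij}(a)\in L^\infty(\R)$ by \eqref{matrizA}.

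Next I would establish the unweighted endpoint $q=\infty$ directly from the representation, to serve as the seed for the Calder\'on--Zygmund machinery. To bound $\partial_{ij}u(t,x)-\partial_{ij}u(t,x')$ with $r=|x-x'|$, split the $\tau$-integral at $\tau=r^2$: on $\{0<\tau<r^2\}$ do not take the spatial difference but use the vanishing moment $\int_{\R^n}\partial_{y_iy_j}p(t,\tau,y)\,dy=0$ and the H\"older continuity of $f(t-\tau,\cdot)$, so that $\int_{\R^n}|\partial_{y_iy_j}p(t,\tau,y)|\,|y|^\alpha\,dy\le C\tau^{(\alpha-2)/2}$, which is integrable near $\tau=0$ precisely because $\alpha>0$ and contributes $\lesssim r^\alpha$; on $\{\tau\ge r^2\}$ use smoothness of the kernel in $y$, again contributing $\lesssim r^\alpha$. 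Combined with the (easier) bound for $\|\partial_{ij}u(t,\cdot)\|_{L^\infty(\R^n)}$ this yields $\|\partial_{ij}u(t,\cdot)\|_{C^{0,\alpha}(\R^n)}\le C\sup_{s\le t}\|f(s,\cdot)\|_{C^{0,\alpha}(\R^n)}$, i.e.\ the $q=\infty$, $\nu\equiv1$ case. Together with the kernel conditions of the previous step, this $L^\infty(\R;C^{0,\alpha})$-bound yields boundedness of $\mathcal T_{ij}$ from the Hardy space $H^1(\R;C^{0,\alpha})$ to $L^1(\R;C^{0,\alpha})$ (estimate $\mathcal T_{ij}$ of an atom over a dilate of its supporting interval by the $L^\infty$ bound, over the complement by the H\"ormander condition and the atom's cancellation), and real interpolation between these two endpoints gives boundedness on $L^q(\R;C^{0,\alpha})$ for every $1<q<\infty$. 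A Calder\'on--Zygmund decomposition in $t$ (good part handled by the $L^{q_0}$-bound just obtained, bad part by the H\"ormander condition) yields the weak-type $(1,1)$ bound on $L^1(\R;C^{0,\alpha})$; the sharp-maximal-function / Coifman--Fefferman method, which is insensitive to the Banach target, then upgrades all of this to the weighted statements --- strong $L^q(\R,\nu;C^{0,\alpha})$ for $\nu\in A_q(\R)$, $1<q<\infty$, and weak-type $(1,1)$ for $\nu\in A_1(\R)$. The norm and a.e.\ convergence of \eqref{equation2 x}--\eqref{equation2 t} follow from the same estimates applied to the maximal truncations $\sup_\varepsilon\|\partial^\varepsilon_{ij}u(t,\cdot)\|$ together with a density argument, and the statements for $\partial_t u$ from the kernel identity noted above.

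I expect the main obstacle to be this diagonal step, together with the correct interpretation of the principal value: one must simultaneously produce the $C^{0,\alpha}(\R^n)\to C^{0,\alpha}(\R^n)$ boundedness of $\partial_{ij}P_{t,\tau}$ with the sharp rate $\tau^{-1}$ and show that $\PV\int_0^\infty e^{-\tau}(\partial_{ij}P_{t,\tau})f(t-\tau,\cdot)\,d\tau$ genuinely converges in $C^{0,\alpha}$, even though $\int_0^1\tau^{-1}\,d\tau=\infty$. Reconciling these forces one to exploit the second-order (mean-zero) structure of the Gaussian kernel in tandem with the exact correction terms $I_{ij}(a)$ and $J(a)$ --- which is precisely why those explicit terms appear --- and, since $C^{0,\alpha}(\R^n)$ is not a Hilbert space, one cannot invoke Plancherel in $t$ and must obtain the unweighted seed ``by hand'' via the kernel computation above rather than by the usual $L^2$ argument.
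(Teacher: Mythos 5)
Your proposal is correct and follows essentially the same route as the paper's proof: you take the $L^\infty(\R;C^{0,\alpha}(\R^n))\to L^\infty(\R;C^{0,\alpha}(\R^n))$ bound as the Calder\'on--Zygmund seed (this is exactly the paper's Lemma~\ref{Lemma6.1}, obtained by the same split of the $\tau$-integral at $\tau=|x_1-x_2|^2$ and the vanishing moment $\int_{\R^n}\partial_{y_iy_j}p\,dy=0$), verify the operator-valued size and smoothness estimates $\|\mathrm K_{ij}(t,\tau)\|\lesssim |t-\tau|^{-1}$, $\|\partial_t\mathrm K_{ij}\|+\|\partial_\tau\mathrm K_{ij}\|\lesssim |t-\tau|^{-2}$ on $(\R,|\cdot|,dt)$ with $E=F=C^{0,\alpha}(\R^n)$, and conclude by vector-valued Calder\'on--Zygmund theory. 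The only real difference is cosmetic: you unpack the atomic $H^1\to L^1$ estimate, interpolation, Calder\'on--Zygmund decomposition and Coifman--Fefferman machinery that the paper simply packages into its Theorem~\ref{thm:CZ} (whose condition (I) explicitly permits an $L^\infty_c\to L^\infty$ seed); your reduction of $\partial_tu$ to $\partial_{ij}u$ and $u$ via the identity $(\partial_t+\partial_\tau)p=a^{ij}(t)\partial_{ij}p-p$ is a clean way to carry out what the paper leaves to the reader.
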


We next turn our attention to the Cauchy problem \eqref{EQ2}.
The next statement proves the existence and uniqueness of a classical solution
and the representation formulas for its derivatives. We denote $\R^{n+1}_+=(0,\infty)\times\R^n$.

\begin{thm}[Classical solvability for the Cauchy problem]\label{fundamental 3}
Let $g=g(x)\in L^p(\R^n)$ and $f=f(t,x)\in L^p(\R^{n+1}_+)$, $1\leq p\leq\infty$, and define
\begin{equation}\label{eq:v}  
v(t,x)=\int_0^t\int_{\R^n}p(t,\tau,y)f(t-\tau,x-y)\,dy\,d\tau+\int_{\R^n}p(t,t,y) g(x-y)\,dy.
\end{equation}
Then
$$\|v\|_{L^p(\R^{n+1})}\leq\|f\|_{L^p(\R^{n+1}_+)}+\|g\|_{L^p(\R^n)}.$$
If $f\in C^2_c(\R_+^{n+1})$ and $g\in C^2_c(\R^n)$  then $v$ is the unique bounded
classical solution to the Cauchy problem
$$\begin{cases}
\partial_tv-a^{ij}(t)\partial_{ij}v+v=f&\hbox{for}~t >0,~x\in\R^n \\
v(0,x)=g&\hbox{for}~x\in\R^n.
\end{cases}$$
In this case the following pointwise limit formulas hold:
\begin{equation}\label{equation3 x}
\begin{aligned}
\partial_{ij}v(t,x)&=\lim_{\varepsilon\rightarrow 0}\int_\varepsilon^t\int_{\mathbb{R}^n}
\partial_{y_iy_j}p(t,\tau,x-y)f(t-\tau,y)\,dy\,d\tau \\
&\quad+\int_{\mathbb{R}^n}\partial_{y_iy_j}p(t,t,y)g(x-y)\,dy
\end{aligned}
\end{equation}
and
\begin{equation}\label{equation3 t}
\begin{aligned}
\partial_{t}v(t,x) &=\lim_{\varepsilon \rightarrow 0}\int_\varepsilon^t \int_{\mathbb{R}^n}(\partial_t+ \partial_{\tau})p(t,\tau,x-y)f(t-\tau,y) \,dy\, d\tau \\
&\quad+\int_{\mathbb{R}^n}\partial_tp(t,t,y) g(x-y)\,dy+f(t,x).
\end{aligned}
\end{equation}
\end{thm}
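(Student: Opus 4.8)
The plan is to reduce Theorem~\ref{fundamental 3} to Theorem~\ref{fundamental 1} via the splitting $v=u+V$, where $u$ is the whole-line solution operator applied to the zero extension of $f$, and $V$ is the Gaussian-semigroup part carrying the initial datum. Concretely, set $\tilde f:=f\chi_{\{t>0\}}$; since the kernel $p$ in \eqref{GW1} contains $\chi_{\{\tau>0\}}$, for $t>0$ the first summand of \eqref{eq:v} coincides with the function $u$ defined by \eqref{u} with datum $\tilde f$, and $u\equiv0$ for $t\le0$. Writing $V(t,x):=\int_{\R^n}p(t,t,y)g(x-y)\,dy$ (which vanishes for $t\le0$ automatically, by $\chi_{\{\tau>0\}}$) we have $v=u+V$. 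The norm bound is then immediate: Theorem~\ref{fundamental 1} gives $\|u\|_{L^p(\R^{n+1})}\le\|\tilde f\|_{L^p(\R^{n+1})}=\|f\|_{L^p(\R^{n+1}_+)}$; and writing $p(t,\tau,y)=e^{-\tau}g_{A_{t,\tau}}(y)$, where $g_M(y)=(4\pi)^{-n/2}(\det M)^{-1/2}\exp(-\tfrac14\langle M^{-1}y,y\rangle)$ is the Gaussian of unit mass attached to the positive-definite matrix $M$, one has $\|p(t,t,\cdot)\|_{L^1(\R^n)}=e^{-t}$, so Young's inequality yields $\|V(t,\cdot)\|_{L^p(\R^n)}\le e^{-t}\|g\|_{L^p(\R^n)}$ and hence $\|V\|_{L^p(\R^{n+1})}\le\|g\|_{L^p(\R^n)}$ for every $1\le p\le\infty$; the triangle inequality closes this step.

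Next, assume $f\in C^2_c(\R^{n+1}_+)$ and $g\in C^2_c(\R^n)$, and fix $\delta>0$ with $\supp f\subset[\delta,T]\times\R^n$. Then $\tilde f\in C^2_c(\R^{n+1})$, so Theorem~\ref{fundamental 1} shows $u$ is a bounded classical solution of $\partial_tu-a^{ij}(t)\partial_{ij}u+u=\tilde f$ on $\R^{n+1}$; since in addition $u\equiv0$ for $t\le\delta$, it solves \eqref{EQ1} with right-hand side $f$ for $t>0$ and $u(0,\cdot)=0$. It then remains to check that $V$ solves the homogeneous Cauchy problem. Because $g\in C^2_c$, $V=p(t,t,\cdot)\ast g$ is $C^2$ in $x$ with $\partial_{x_ix_j}V(t,x)=\int_{\R^n}\partial_{y_iy_j}p(t,t,y)g(x-y)\,dy$ (integrating by parts; Gaussian decay kills all boundary terms), and these are bounded. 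I would then use the heat-kernel identity $\tfrac{d}{ds}g_{M(s)}(y)=\dot M^{kl}(s)\,\partial_{y_ky_l}g_{M(s)}(y)$ with $M(t)=A_{t,t}=\int_0^ta(r)\,dr$ (so $\dot M(t)=a(t)$ a.e.) to get $\tfrac{d}{dt}p(t,t,y)=a^{ij}(t)\partial_{y_iy_j}p(t,t,y)-p(t,t,y)$ for a.e.~$t>0$; integrating against $g(x-y)$ gives $\partial_tV=a^{ij}(t)\partial_{ij}V-V$ a.e., so $t\mapsto V(t,x)$ is locally Lipschitz on $(0,\infty)$. Finally, ellipticity forces $\Lambda tI\le A_{t,t}\le\Lambda^{-1}tI$, so $p(t,t,\cdot)$ is an approximate identity as $t\to0^+$ and $V(t,\cdot)\to g$ uniformly. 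Hence $v=u+V$ is a bounded classical solution of \eqref{EQ2}. For uniqueness I would invoke Theorem~\ref{fundamental 1} once more: the difference $w$ of two bounded classical solutions has $w(0,\cdot)=0$, which forces $\partial_{ij}w(0,\cdot)=0$, so the zero extension of $w$ to $t\le0$ is a bounded classical solution on $\R^{n+1}$ with right-hand side $0$, hence $w\equiv0$.

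For the pointwise formulas, the $g$-contributions are the computations already made: $\partial_{ij}V(t,x)=\int_{\R^n}\partial_{y_iy_j}p(t,t,y)g(x-y)\,dy$ and $\partial_tV(t,x)=\frac{d}{dt}\int_{\R^n}p(t,t,y)g(x-y)\,dy=\int_{\R^n}\partial_tp(t,t,y)g(x-y)\,dy$. For the $f$-contributions I would differentiate $u(t,x)=\int_0^t\int_{\R^n}p(t,\tau,y)\tilde f(t-\tau,x-y)\,dy\,d\tau$ directly: since $\int_{\R^n}p(t,\tau,y)\,dy=e^{-\tau}$ and $\int_{\R^n}|\partial_{y_i}p(t,\tau,y)|\,dy\le C\tau^{-1/2}$ are both integrable in $\tau$ on $(0,t)$, alternately differentiating under the integral and integrating by parts in $y$ gives $\partial_{ij}u(t,x)=\int_0^t\bigl(\int_{\R^n}\partial_{y_iy_j}p(t,\tau,y)\tilde f(t-\tau,x-y)\,dy\bigr)d\tau$, where the inner integral is bounded uniformly in $\tau$ by $C\|\tilde f\|_{C^2}$ thanks to the cancellations $\int_{\R^n}\partial_{y_iy_j}p\,dy=\int_{\R^n}y_k\partial_{y_iy_j}p\,dy=0$, the second-order Taylor expansion of $\tilde f(t-\tau,x-\cdot)$ at the origin, and the moment bound $\int_{\R^n}|y|^2|\partial_{y_iy_j}p(t,\tau,y)|\,dy\le C$. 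Thus the $\tau$-integral converges absolutely and equals $\lim_{\varepsilon\to0^+}\int_\varepsilon^t$, which is the $f$-part of \eqref{equation3 x}; adding the $g$-part gives \eqref{equation3 x}. For \eqref{equation3 t} I would combine the PDE $\partial_tv=a^{ij}(t)\partial_{ij}v-v+f$ (already established) with the identity $(\partial_t+\partial_\tau)p(t,\tau,y)=a^{ij}(t)\partial_{y_iy_j}p(t,\tau,y)-p(t,\tau,y)$ (a direct computation from \eqref{GW1} using $\partial_tA_{t,\tau}=a(t)-a(t-\tau)$, $\partial_\tau A_{t,\tau}=a(t-\tau)$ and the heat-kernel identity): substituting the formula just obtained for $\partial_{ij}v$ and writing $\int_0^t\int_{\R^n}p\cdot f=\lim_{\varepsilon\to0^+}\int_\varepsilon^t\int_{\R^n}p\cdot f$ by absolute convergence, then collecting terms, yields \eqref{equation3 t}.

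The hard part is the analysis of $V$ near $t=0$ with merely measurable coefficients: making rigorous the a.e.~identity $\tfrac{d}{dt}p(t,t,y)=a^{ij}(t)\partial_{y_iy_j}p(t,t,y)-p(t,t,y)$ and the approximate-identity limit $V(t,\cdot)\to g$ is the delicate point. By contrast, once Theorem~\ref{fundamental 1} is available, the decomposition $v=u+V$ and the passage from its $\Omega_\varepsilon$-truncation to the one-sided truncation $\{\tau>\varepsilon\}$ used in \eqref{equation3 x}--\eqref{equation3 t} (which is precisely why no $I_{ij}(a)(t)$-correction appears here) are routine.
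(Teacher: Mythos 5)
Your argument is correct, and the decomposition you use, $v=u+V$ with $u$ carrying the forcing and $V$ the initial datum, is exactly the paper's split $v=\psi+\varphi$. Where you diverge from the paper is in how the derivative formulas are obtained. The paper literally reruns the proof of Theorem~\ref{fundamental 1} with the truncation set $\Omega_\varepsilon$ replaced by the one-sided set $\Sigma_\varepsilon=\{\tau>\varepsilon\}$: two integrations by parts in $y$ over $\Sigma_\varepsilon$ give the first term of \eqref{equation3 x} (the boundary $\partial\Sigma_\varepsilon=\{\tau=\varepsilon\}$ has no cylindrical piece, so the $I_{ij}(a)$ correction never arises), and for $\partial_t\psi$ a parametric differentiation plus integration by parts in $\tau$ over $\Sigma_\varepsilon$ produces the boundary term $\int_{\R^n}p(t,\varepsilon,y)f(t-\varepsilon,x-y)\,dy\to f(t,x)$. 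You instead observe that, after two integrations by parts in the inner $y$-integral, the cancellations $\int\partial_{y_iy_j}p\,dy=\int y_k\partial_{y_iy_j}p\,dy=0$ together with the moment bound $\int|y|^2|\partial_{y_iy_j}p|\,dy\le C$ make the inner integral uniformly bounded in $\tau$, so the $\tau$-integral is absolutely convergent and the $\lim_{\varepsilon\to0}\int_\varepsilon^t$ in \eqref{equation3 x} is automatic; you then recover \eqref{equation3 t} by substituting the PDE and Lemma~\ref{nucleo}$(ii)$ into the $\partial_{ij}v$ formula rather than by a second boundary computation. Both routes are legitimate; yours is shorter once the PDE is in hand, while the paper's is self-contained and produces the $f(t,x)$ boundary term without first knowing that $v$ solves \eqref{EQ2}. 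Two smaller remarks: $(1)$ for uniqueness the paper simply cites \cite[Theorem~8.1.7]{Krylov book}, which is cleaner than your zero-extension reduction to Theorem~\ref{fundamental 1} -- that reduction works, but checking that the extended function is classical across $t=0$ with merely measurable $a(t)$ (the one-sided $t$-derivative at $0$ has to be computed by averaging the equation over $(0,h)$ rather than plugging in $t=0$) is exactly the kind of fuss the Krylov citation avoids; $(2)$ the ``hard part'' you flag concerning $V$ near $t=0$ is something the paper dismisses with ``it can be directly checked,'' so you are not missing anything there -- your sketch via $\tfrac{d}{dt}g_{A_{t,t}}=a^{ij}(t)\partial_{y_iy_j}g_{A_{t,t}}$ a.e.\ and the approximate-identity limit is the intended argument.
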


Our last main result contains the mixed-norm estimates
and the formulas for derivatives for the Cauchy problem \eqref{EQ2} when $g=0$.

\begin{thm}\label{thm:Cauchy}
Suppose that $f$ satisfies the assumptions in any of Theorems
\ref{thm:parabolic weights}, \ref{thm:parabolic-BMO}, \ref{thm:LqSobolev}, \ref{thm:mixed-BMO}
or \ref{thm:Holder}, with $\R^{n+1}_+$ in place of $\R^{n+1}$.
Let $v$ be the solution to the Cauchy problem
$$\begin{cases}
\partial_tv-a^{ij}(t)\partial_{ij}v+v=f&\hbox{for}~t >0,~x\in\R^n \\
v(0,x)=0&\hbox{for}~x\in\R^n
\end{cases}$$
given by \eqref{eq:v}. Then $v$ satisfies the corresponding estimates in those Theorems, with $\R^{n+1}_+$
in place of $\R^{n+1}$. Moreover, formulas \eqref{equation3 x} and \eqref{equation3 t}
for the derivatives of $v$ hold true in the norm and a.e./pointwise senses as
stated there, with $\R^{n+1}_+$ in place of $\R^{n+1}$.
\end{thm}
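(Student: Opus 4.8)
The plan is to reduce Theorem~\ref{thm:Cauchy} to the corresponding whole-space statements by extending $f$ by zero across $\{t=0\}$. Given $f$ on $\R^{n+1}_+$, set $\tilde f(t,x)=f(t,x)$ for $t>0$ and $\tilde f(t,x)=0$ for $t\le0$, and let $\tilde u$ be defined by \eqref{u} with $\tilde f$ in place of $f$. Since $p(t,\tau,y)$ is supported in $\tau>0$ while $\tilde f(t-\tau,\cdot)\equiv0$ whenever $\tau\ge t$, for every $t>0$ we have
\[
\tilde u(t,x)=\int_0^{\infty}\!\!\int_{\R^n}p(t,\tau,y)\tilde f(t-\tau,x-y)\,dy\,d\tau
=\int_0^{t}\!\!\int_{\R^n}p(t,\tau,y)f(t-\tau,x-y)\,dy\,d\tau=v(t,x),
\]
and $\tilde u\equiv0$ on $\{t\le0\}$; hence $\tilde u|_{\R^{n+1}_+}=v$ (recall $g=0$), so it suffices to transport each whole-space conclusion for $\tilde u$ to the half-space.

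First I would check that the hypotheses transfer. Extension by zero in time leaves unchanged the $L^p$, mixed-norm and Hölder norms of $f$ over $\R^{n+1}_+$ and the $L^\infty$ norms of $wf$ and $\nu f$, so $\tilde f$ meets the whole-space hypotheses once the weights are extended to $\R^{n+1}$: a time weight $\nu\in A_q((0,\infty))$ by the even reflection $\tilde\nu(t)=\nu(|t|)\in A_q(\R)$, and a parabolic weight $w$ by a reflection in time staying inside $PA_p$ (resp.\ $w^{-1}$ inside $PA_1$) with a controlled constant, where any parabolic ball straddling $\{t=0\}$ is handled by splitting it and comparing its halves with a slightly dilated parabolic ball contained in $\R^{n+1}_+$; the spatial weight $\omega\in A_p(\R^n)$ is untouched. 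Applying the pertinent theorem among Theorems~\ref{thm:parabolic weights},~\ref{thm:parabolic-BMO},~\ref{thm:LqSobolev},~\ref{thm:mixed-BMO},~\ref{thm:Holder} to $\tilde u$, restricting the resulting inequality to $\{t>0\}$, and using $\tilde u|_{\R^{n+1}_+}=v$ together with the equality of norms yields the claimed bound for $v$ over $\R^{n+1}_+$; in the two $BMO$ statements one also uses that the half-space sharp maximal function is dominated pointwise on $\{t>0\}$ by the full-space sharp maximal function of $\tilde u$, being a supremum over a smaller family of parabolic balls. This gives all the quantitative estimates, including the weak-type ones for $q=1$.

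It remains to obtain the formulas \eqref{equation3 x},~\eqref{equation3 t} for $v$. For $f\in C^2_c(\R^{n+1}_+)$ with $g=0$ these are precisely Theorem~\ref{fundamental 3}; for general $f$ one picks $f_k\in C^2_c(\R^{n+1}_+)$ converging to $f$ in the relevant (weighted, mixed, or Hölder) norm, lets $v_k$ be the corresponding solutions \eqref{eq:v}, and passes to the limit: by linearity $v-v_k$ solves the problem with datum $f-f_k$, so the estimates just proved give $\partial_{ij}v_k\to\partial_{ij}v$ and $\partial_tv_k\to\partial_tv$ in the relevant norm and, along a subsequence, in the a.e./pointwise sense stated. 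Making the inner $\varepsilon$-limit in \eqref{equation3 x},~\eqref{equation3 t} rigorous for general $f$ needs uniform bounds for the maximally truncated operators $f\mapsto\sup_{\varepsilon>0}\big|\int_\varepsilon^{t}\int_{\R^n}\partial_{y_iy_j}p(t,\tau,x-y)f(t-\tau,y)\,dy\,d\tau\big|$ and its $\partial_t$ analogue, obtained either by adapting the Calder\'on--Zygmund analysis of the whole-space formulas \eqref{equation2 x},~\eqref{equation2 t} to the half-space or by comparing the ``$\tau>\varepsilon$'' truncation with the truncation by $\Omega_\varepsilon$. This comparison is the step I expect to be the main obstacle: deleting the parabolic ball $\Omega_\varepsilon^c$ about the origin is what produces the correction terms $f(t,x)I_{ij}(a)(t)$ in \eqref{equation2 x} and $f(t,x)J(a)(t)$ in \eqref{equation2 t}, whereas deleting only $\{0<\tau\le\varepsilon\}$ produces the corrections $0$ and $f(t,x)$; one must verify that the difference of the two truncated integrals converges, as $\varepsilon\to0^+$, to exactly the multiple of $f(t,x)$ built from \eqref{matrizA} (resp.\ \eqref{matrizb}) that accounts for this. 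That identity is essentially contained in the proof of Theorem~\ref{fundamental 3}, and for Theorem~\ref{thm:Cauchy} it only remains to note that it persists under the restriction and approximation above.
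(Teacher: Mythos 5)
Your reduction of the \emph{estimates} to the whole-space theorems is sound and is close in spirit to what the paper does: extending $f$ by zero in time and noting $\tilde u|_{\R^{n+1}_+}=v$ is legitimate (the paper effectively does the same through the factor $\chi_{0<\tau<t}$), and restricting the whole-space bounds for $\partial_{ij}\tilde u$, $\partial_t\tilde u$ gives the half-space bounds, modulo the weight-extension step (the reflection claims for $PA_p$, $PA_1$, $A_q$ weights are true but are asserted, not proved). The genuine gap is in the second half of the theorem, the validity of \eqref{equation3 x}--\eqref{equation3 t} for general $f$ in the norm and a.e.\ senses. You correctly identify that everything hinges on comparing the truncation over $\Sigma_\varepsilon=\{\tau>\varepsilon\}$ with the Calder\'on--Zygmund truncation over $\Omega_\varepsilon$, but you then dismiss this as ``essentially contained in the proof of Theorem \ref{fundamental 3}.'' It is not: that proof only treats $f\in C^2_c$ and produces pointwise limits; it contains no quantitative, $\varepsilon$-uniform control. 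What is actually needed --- and what constitutes the technical core of the paper's proof --- is the pointwise bound
\begin{equation*}
\big|(\chi_{\Omega_\varepsilon}-\chi_{\Sigma_{\varepsilon^2}})\chi_{0<\tau<t}\,\partial_{y_iy_j}p(t,\tau,y)\big|
\le \frac{C}{\varepsilon^{n+2}}\,\Phi\Big(\frac{|y|}{\varepsilon},\frac{\tau}{\varepsilon^2}\Big),
\end{equation*}
with $\Phi$ radial, decreasing and integrable on $\R^{n+1}$ (and the variant $\frac{e^{-|y|^2/(2c\tau)}}{\tau^{n/2}}\,\varepsilon^{-2}\phi(\tau/\varepsilon^2)$ used for the mixed-norm and H\"older cases). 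This uniform domination by a maximal-type/approximate-identity operator, bounded on all the relevant weighted, mixed-norm and H\"older-valued spaces, is what lets one transfer both the derivative estimates and the convergence of the truncations from $\Omega_\varepsilon$ to $\Sigma_\varepsilon$; without it your argument does not yield the representation formulas for general $f$.

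A secondary problem is the approximation step you propose: convergence of $\partial_{ij}v_k\to\partial_{ij}v$ does not by itself imply that the truncated integrals of $f$ (rather than of $f_k$) converge to $\partial_{ij}v$; one must control the maximal truncated operator uniformly and then invoke the Banach principle of a.e.\ convergence (equivalently, the closedness statement at the end of Theorem \ref{thm:CZ}), which is exactly how the paper concludes. Moreover, in the H\"older case the density of $C^2_c(\R^{n+1}_+)$ in $L^q(\R,\nu;C^{0,\alpha}(\R^n))$ fails, so the approximation scheme as stated breaks down there; the maximal-operator route avoids this. In short: the strategy is the right one, but the decisive comparison estimate is missing rather than ``already contained'' elsewhere, and the limit-passage for general $f$ is not justified as written.
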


Our results were inspired by the fundamental work on mixed-norm $L^q(L^p)$ and
$L^q(C^{\alpha})$ \textit{a priori} estimates for \eqref{EQ1}
by N. V. Krylov \cite{Krylov book, Krylov0, Krylov}.
See also \cite{DKadv, DKtran, PST} for estimates in weighted Sobolev spaces.
Parabolic singular integrals had been considered in the 1960's by Fabes, Sadosky and Jones
\cite{Fabes, Fabes-Sadosky, Jones}.
One of the main tools in Krylov's papers \cite{Krylov0} and \cite{Krylov} is the use of the
vector-valued Calder\'on--Zygmund theory for parabolic singular
integrals.

In this paper we follow the philosophy introduced by A. P. Calder\'on \cite{CalderonBull}
in the elliptic case.

First, we solve \eqref{EQ1}
for compactly supported $C^2$ right hand sides $f$, and obtain the explicit formulas for the
derivatives of $u$ (Theorem \ref{fundamental 1}). 
As it can be seen, $\partial_tu$ and $\partial_{ij}u$ are expressed as principal value singular integrals
acting on $f$ plus a multiplication operator by the bounded functions \eqref{matrizA} and \eqref{matrizb},
see \eqref{equation2 x} and \eqref{equation2 t}. As mentioned before, this seems to be the first time
these terms are explicitly computed. The derivation of the multiplication operators
\eqref{matrizA} and \eqref{matrizb} involves quite delicate real-variable arguments,
see Subsection \ref{derivation}.

In a second step, and relying on the vector-valued version
of Calder\'on's method that was presented in \cite{RRT},
we are able to solve \eqref{EQ1} when the right hand side $f$ is in more general classes,
namely, the weighted $L^q$ spaces (Theorem \ref{thm:parabolic weights}) and
the weighted mixed-norm classes $L^q(L^p)$ and $L^q(C^{\alpha})$ (Theorems \ref{thm:LqSobolev}
and \ref{thm:Holder}, respectively).
More concretely, we shall work with the vector-valued Calder\'on--Zygmund
singular integrals theory in spaces of homogeneous type.
Such machinery requires two ingredients:
a kernel satisfying appropriate size and smoothness estimates, and the
boundedness of the given operator in an $L^{q_0}$ space, for some $1\leq q_0\leq\infty$,
see \cite{Francisco, RT}.
In Theorems \ref{thm:parabolic weights} and \ref{thm:LqSobolev},
which correspond to \textit{parabolic Riesz transforms}, the natural exponent is $q_0=2$.
This is consistent with the usual theory of Riesz transforms for elliptic PDEs considered
by Calder\'on \cite{CalderonBull}, where the Fourier
transform readily shows the $L^2$ continuity.
But for the weighted mixed-norm H\"older estimates of Theorem \ref{thm:Holder},
the initial exponent is $q_0=\infty$. The estimate can be found in Lemma \ref{Lemma6.1}.

On top of all this, with our approach we cover the end point cases $q=1$, where
weak-type estimates are found, and $BMO$ (Theorems \ref{thm:parabolic-BMO}
and \ref{thm:mixed-BMO}). These are also novel, even for the classical heat equation.

To prove the $BMO$
estimates we will need to extend a result on weighted $BMO$ boundedness of singular integrals
from \cite{Segovia-Torrea} to the case of spaces of homogeneous type,
see Theorem \ref{thm:Segovia-Torrea}, which is of independent interest.

The estimates for the Cauchy problem \eqref{EQ2} in Theorem \ref{thm:Cauchy}
are obtained through a delicate comparison argument with the solution $u$ to \eqref{EQ1}.
Indeed, this idea permits us to transfer the already known results for $u$ to $v$,
see Section \ref{section:Cauchy} for details.

\section{Classical solvability: proofs of Theorems \ref{fundamental 1}
and \ref{fundamental 3}}\label{pre}

In this section we present the proofs of Theorems \ref{fundamental 1} and \ref{fundamental 3}.
Towards this end we need a series of preliminary computational lemmas.

\subsection{Computational lemmas}

This subsection is devoted to several derivation formulas and estimates that will be useful
for the proofs of Theorems \ref{fundamental 1} and \ref{fundamental 3}.

\begin{lem}\label{derivativex}
Let $B$ be an $n\times n$ symmetric constant matrix. For any $x=(x_1,\ldots,x_n)\in\R^n$,
\begin{itemize}
\item[(1)] $\displaystyle \partial_{x_ix_j}\exp\big(-\tfrac{1}{4}\langle Bx,x\rangle\big)=
\tfrac{1}{2}\exp\big(-\tfrac{1}{4}\langle Bx,x\rangle\big)\Big[-B_{ij} +\tfrac{1}{2}(Bx)_i(Bx)_j\Big].$
\item[(2)] $\displaystyle \partial_{x_ix_jx_k} \exp\big(-\tfrac{1}{4}\langle Bx,x\rangle\big)\newline
~=  \tfrac{1}{4} \exp\big(-\tfrac{1}{4}\langle Bx,x\rangle\big)
\Big[B_{ij} (Bx)_k +  B_{jk} (Bx)_i+B_{ki} (Bx)_j-\tfrac{1}{2}(B x)_i (B x)_j(Bx)_k \Big].$
\end{itemize}
\end{lem}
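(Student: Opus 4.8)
The plan is to compute the derivatives directly by the chain and product rules, keeping careful track of the symmetry of $B$. Set $\phi(x) := \exp\big(-\tfrac14\langle Bx,x\rangle\big)$ and write $Q(x) := \langle Bx,x\rangle = \sum_{k,l} B_{kl} x_k x_l$.

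First I would record the first derivatives. Since $B$ is symmetric, $\partial_{x_i} Q = 2(Bx)_i$, so
\[
\partial_{x_i}\phi = -\tfrac14 (\partial_{x_i}Q)\,\phi = -\tfrac12 (Bx)_i\,\phi.
\]
For part (1), differentiate this once more in $x_j$, using the product rule and $\partial_{x_j}(Bx)_i = B_{ij}$:
\[
\partial_{x_ix_j}\phi = -\tfrac12\big[B_{ij}\,\phi + (Bx)_i\,\partial_{x_j}\phi\big]
= -\tfrac12\big[B_{ij}\,\phi - \tfrac12(Bx)_i(Bx)_j\,\phi\big],
\]
which is exactly the claimed identity after factoring out $\tfrac12\phi$.

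For part (2), differentiate the formula from part (1) once more in $x_k$. The factor in brackets is $-B_{ij} + \tfrac12(Bx)_i(Bx)_j$, and there is also the prefactor $\tfrac12\phi$ to differentiate. Using $\partial_{x_k}\phi = -\tfrac12(Bx)_k\phi$, $\partial_{x_k}(Bx)_i = B_{ik}$, and $\partial_{x_k}(Bx)_j = B_{jk}$, I get
\[
\partial_{x_ix_jx_k}\phi
= \tfrac12\Big(-\tfrac12(Bx)_k\phi\Big)\Big[-B_{ij}+\tfrac12(Bx)_i(Bx)_j\Big]
+ \tfrac12\phi\cdot\tfrac12\big[B_{ik}(Bx)_j + B_{jk}(Bx)_i\big].
\]
Expanding and collecting the $\tfrac14\phi$ prefactor yields the bracket
\[
B_{ij}(Bx)_k + B_{ik}(Bx)_j + B_{jk}(Bx)_i - \tfrac12(Bx)_i(Bx)_j(Bx)_k,
\]
matching the statement (the three $B\cdot(Bx)$ terms appear symmetrically, as expected since mixed partials commute). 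I do not anticipate any genuine obstacle here: the only points requiring care are the repeated use of the symmetry $B_{kl}=B_{lk}$ so that $\partial_{x_i}\langle Bx,x\rangle = 2(Bx)_i$ with no stray factor, and bookkeeping the constants $-\tfrac14$, $\tfrac12$, $\tfrac14$ through the two rounds of differentiation.
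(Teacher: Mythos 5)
Your proposal is correct and is essentially the same argument as the paper's: iterated elementary differentiation, producing the same intermediate identities $\partial_{x_i}\phi=-\tfrac12 (Bx)_i\phi$ and the part (1) formula before differentiating once more. The paper merely organizes the chain rule through the auxiliary substitution $z=\tfrac12 Bx$, $w=\tfrac12 x$ and the functions $f$, $g_i$, $h_{ij}$, which is a cosmetic difference from your direct use of $\partial_{x_i}\langle Bx,x\rangle=2(Bx)_i$.
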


\begin{proof}
Denote $x= (x_1,\dots,x_n)$ and $z= (z_1,\dots, z_n)$.
Given $z:=\tfrac{1}{2}Bx$ and $w:=\tfrac{1}{2}x$, consider the function
$f(z,w)=\exp(-\langle z,w\rangle)$. Then
\begin{equation}\label{firstd}
\begin{aligned}
\partial_{x_i} f(z,w) &=
\sum_{k=1}^n\frac{\partial f}{\partial z_k} \frac{\partial z_k}{\partial x_i}+
\sum_{k=1}^n\frac{\partial f}{\partial w_k}\frac {\partial w_k}{\partial x_i} \\
&=-\tfrac{1}{2}\exp(-\langle z,w\rangle)\bigg[\sum_{k=1}^nw_kB_{ki}+ \sum_{k=1}^nz_k\delta_{ki}\bigg] \\
&=-\tfrac{1}{2}\exp(-\langle z, w\rangle)\bigg[\tfrac{1}{2}\sum_{k=1}^nx_k B_{ki}+\tfrac{1}{2}(Bx)_i\bigg] \\
&=-\tfrac{1}{2}\exp(-\langle z, w\rangle)(Bx)_i.
\end{aligned}
\end{equation}
Analogously, we define $g_i(z,w)=-z_i\exp(-\langle z, w\rangle)$. Then 
\begin{align*}
\partial_{x_j} g_i(z,w) &= \tfrac{1}{2}\bigg[\sum_{\ell=1}^n\frac{\partial g_i}{\partial z_\ell} B_{\ell j}
+ \sum_{\ell=1}^n\frac{\partial g_i}{\partial w_\ell}\delta_{\ell j}\bigg] \\
&=  \tfrac{1}{2}\exp(-\langle z, w\rangle)\bigg[ \sum_{\ell=1}^n
(-\delta_{\ell i}+z_iw_\ell)B_{\ell j} + \sum_{\ell=1}^nz_i z_\ell \delta_{\ell j}\bigg] \\
&= \tfrac{1}{2}\exp(-\langle z,w\rangle) \Big[-B_{ij}+ \tfrac{1}{2}(B x)_i (B x)_j \Big].
\end{align*}
Following the same ideas, if we define $h_{ij}(z,w) = \exp(-\langle z,w\rangle)\Big[-\tfrac{1}{2}B_{ij}+ z_iz_j \Big]$, then 
\begin{align*}
\partial_{x_k}& h_{ij}(z,w) \\
&=  \exp(-\langle z, w\rangle)\Big\{-\tfrac{1}{2} (Bx)_k\Big[-\tfrac{1}{2}B_{ij}+ \tfrac{1}{4}(B x)_i (B x)_j \Big] +\Big[
\tfrac{1}{4} B_{ik}(Bx)_j+\tfrac{1}{4} B_{jk}(B x)_i  \Big]\Big\} \\
&=  \exp(-\langle z, w\rangle) \Big\{ -\tfrac{1}{8}(B x)_i (B x)_j(Bx)_k + 
\tfrac{1}{4}\big(B_{ij} (Bx)_k +  B_{jk} (Bx)_i+B_{ki} (Bx)_j\big)\Big\}.
\end{align*}
\end{proof}

\begin{lem}\label{derivativec}
Let $A(t)$ and $B(t)$, $t\in\R$, be nondegenerate, time dependent, $n \times n$ symmetric matrices
with differentiable entries such that $B(t)=A(t)^{-1}$. If we denote $'=\partial_t$ then
\begin{enumerate}[$(i)$]
\item $B' =-BA'B$;
\item $(\det B)'=-(\det B)\tr(A'B)$;
\item $\partial_t\big((\det B)^{\frac{1}{2}}\exp(-\frac{1}{4}\langle Bx, x\rangle)\big)=
(\det B)^{\frac{1}{2}}\exp(-\frac{1}{4}\langle Bx, x\rangle)\Big[-\frac12\tr(A' B)+\frac14\langle A' B x, Bx\rangle\Big]$.
\end{enumerate}
\end{lem}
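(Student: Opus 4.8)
\textbf{Proof proposal for Lemma \ref{derivativec}.}

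The plan is to prove the three identities in sequence, since $(ii)$ depends on $(i)$ and $(iii)$ depends on both. For part $(i)$, I would start from the defining relation $A(t)B(t)=\Id$, valid for all $t\in\R$. Differentiating in $t$ gives $A'B+AB'=0$, hence $AB'=-A'B$, and multiplying on the left by $B=A^{-1}$ yields $B'=-BA'B$, as claimed. This is the standard formula for the derivative of the inverse of a matrix-valued function.

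For part $(ii)$, I would use Jacobi's formula for the derivative of a determinant: $(\det B)'=(\det B)\tr(B^{-1}B')$. Since $B^{-1}=A$, substituting the expression for $B'$ from $(i)$ gives $(\det B)'=(\det B)\tr\big(A\cdot(-BA'B)\big)=-(\det B)\tr(AB\,A'B)=-(\det B)\tr(A'B)$, where I used $AB=\Id$ and the cyclic invariance of the trace. (Alternatively, one can write $\det B=1/\det A$, differentiate, and apply Jacobi's formula to $\det A$; the two routes give the same answer.)

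For part $(iii)$, I would apply the product rule to the two factors $(\det B)^{1/2}$ and $\exp(-\tfrac14\langle Bx,x\rangle)$. For the first factor, $\partial_t\big((\det B)^{1/2}\big)=\tfrac12(\det B)^{-1/2}(\det B)'=-\tfrac12(\det B)^{1/2}\tr(A'B)$ by $(ii)$. For the exponential factor, since $x$ is a fixed parameter, $\partial_t\langle B(t)x,x\rangle=\langle B'x,x\rangle=-\langle BA'Bx,x\rangle=-\langle A'Bx,Bx\rangle$ using $(i)$ and the symmetry of $B$; hence $\partial_t\exp(-\tfrac14\langle Bx,x\rangle)=\tfrac14\langle A'Bx,Bx\rangle\exp(-\tfrac14\langle Bx,x\rangle)$. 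Combining the two contributions and factoring out $(\det B)^{1/2}\exp(-\tfrac14\langle Bx,x\rangle)$ produces exactly the bracketed expression $-\tfrac12\tr(A'B)+\tfrac14\langle A'Bx,Bx\rangle$.

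None of the steps presents a serious obstacle; the proof is a routine application of matrix calculus (derivative of an inverse, Jacobi's formula, product and chain rules). The only points requiring a little care are the correct placement of $A$ versus $B$ inside the trace and the use of symmetry of $B$ to rewrite $\langle BA'Bx,x\rangle=\langle A'Bx,Bx\rangle$; once these are handled, the identities fall out immediately.
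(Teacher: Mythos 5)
Your proposal is correct and follows essentially the same route as the paper: differentiate the identity $AB=\Id$ for $(i)$, apply Jacobi's formula for $(ii)$, and combine via the product and chain rules, using the symmetry of $B$, for $(iii)$. The only cosmetic differences are that the paper differentiates $BA=\Id$ and states Jacobi's formula through the adjugate rather than as $(\det B)'=(\det B)\tr(B^{-1}B')$, which are trivially equivalent to your steps.
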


\begin{proof}
For $(i)$ we just need to observe that $0=I'=(BA)'=B'A+BA'$, so that $B' A =- B A'$.
For $(ii)$ we recall the well known Jacobi's formula
$(\det B)' =\tr((\operatorname{adj}B)B')$, where $\operatorname{adj}B=(\det B)B^{-1}=(\det B)A$.
Hence, by $(i)$,
$$ (\det B)' = -\tr((\det B)A(BA'B))=-(\det B)\tr(A'B).$$
For $(iii)$ we have 
\begin{align*}
\partial_t&\big((\det B)^{1/2}\exp(-\tfrac{1}{4}\langle Bx, x\rangle)\big)= \\
&=\tfrac{1}{2}(\det B)^{-1/2}(\det B)'\exp(-\tfrac{1}{4}\langle Bx, x\rangle)+
(\det B)^{1/2}(-\tfrac{1}{4}\langle B'x, x\rangle)\exp(-\tfrac{1}{4}\langle Bx, x\rangle) \\
&=(\det B)^{1/2}\exp(-\tfrac{1}{4}\langle Bx, x\rangle)\Big[-\tfrac{1}{2}\tr(A'B)+\tfrac{1}{4}\langle A'Bx,Bx\rangle\Big].
\end{align*}
\end{proof}

\begin{lem}\label{nucleo}
The function $p(t,\tau,x)\geq0$ defined in \eqref{GW1} satisfies the following properties.
\begin{enumerate}[$(i)$]
\item For every $t,\tau\in\R$ we have $\displaystyle \int_{\R^n} p(t,\tau,x)\,dx=e^{-\tau}$.
\item For every $t,\tau\in\R$ and $x\in\R^n$,
$$\partial_\tau p(t,\tau,x)=-(\partial_t-a^{ij}(t)\partial_{ij}+1)p(t,\tau,x).$$
\item There exist constants $C,c>0$ depending on $n$ and $\Lambda$ such that, for every $t,\tau\in\R$ and $x\in\R^n$,
\begin{align*}
&0\leq p(t,\tau,x)\le C\chi_{\tau>0}\,e^{-\tau}\,\frac{e^{-|x|^2/(c\tau)}}{\tau^{n/2}}, \\
&|\partial_ip(t,\tau,x)|\le C\chi_{\tau>0}\,e^{-\tau}\,\frac{|x|e^{-|x|^2/(c\tau)}}{\tau^{n/2+1}}, \\
&|\partial_tp(t,\tau,x)|+|\partial_{\tau}p(t,\tau,x)|+|\partial_{ij}p(t,\tau,x)|\le C\chi_{\tau>0}\,e^{-\tau/2}\,\frac{e^{-|x|^2/(c\tau)}}{\tau^{n/2+1}},\\
&|\partial_t\partial_{ij}p(t,\tau,x)|+|\partial_{\tau}\partial_{ij}p(t,\tau,x)|+|\partial_{ijk}p(t,\tau,x)|\le C\chi_{\tau>0}\,e^{-\tau/2}\,\frac{e^{-|x|^2/(c\tau)}}{\tau^{n/2+3/2}}.
\end{align*}
\item The Fourier transform of the function $x \rightarrow p(t,\tau,x)$ is given, for any $\xi\in\R^n$, by
$$\chi_{\tau>0}\frac{e^{-\tau}}{(4\pi)^{n/2}}\exp\big(-|A_{t,\tau}^{1/2}\xi|^2\big)
=\chi_{\tau>0}\frac{e^{-\tau}}{(4\pi)^{n/2}}\exp\big(-\langle A_{t,\tau}\xi,\xi\rangle\big).$$
\item For a function $f\in L^2(\R^{n+1})$ let us define
$$T_\tau f(t,x):=\int_{\R^{n}}p(t,\tau,y)f(t-\tau,x-y)\,dy$$
for $\tau\geq0$ and $(t,x)\in\R^{n+1}$. Then, for any $\tau_1,\tau_2>0$,
$$T_{\tau_1} T_{\tau_2}f(t,x)=T_{\tau_1+\tau_2}f(t,x).$$
\end{enumerate}
\end{lem}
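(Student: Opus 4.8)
The plan is to verify the five properties by direct computation from the explicit Gaussian formula \eqref{GW1}, the only point with genuine content being the cancellation behind $(ii)$. I would dispose of $(iv)$ and $(i)$ first: writing $p(t,\tau,x)=\chi_{\tau>0}e^{-\tau}(4\pi)^{-n/2}(\det B_{t,\tau})^{1/2}\exp\big(-\tfrac14\langle B_{t,\tau}x,x\rangle\big)$ and recalling that $B_{t,\tau}$ is symmetric positive definite with inverse $A_{t,\tau}$, property $(iv)$ is precisely the classical formula for the Fourier transform of a Gaussian (and $\langle A_{t,\tau}\xi,\xi\rangle=|A_{t,\tau}^{1/2}\xi|^2$ by symmetry), while property $(i)$ follows either by evaluating $(iv)$ at $\xi=0$ or, directly, by the change of variables $y=\tfrac1{\sqrt2}B_{t,\tau}^{1/2}x$, which turns $\int_{\R^n}p(t,\tau,x)\,dx$ into $\chi_{\tau>0}e^{-\tau}\int_{\R^n}(2\pi)^{-n/2}e^{-|y|^2/2}\,dy=\chi_{\tau>0}e^{-\tau}$.

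The heart of the lemma is $(ii)$. Since $A_{t,\tau}=\int_{t-\tau}^{t}a(r)\,dr$, the fundamental theorem of calculus and the Leibniz rule give, for a.e.\ $t$ and every $\tau>0$,
\begin{equation*}
\partial_tA_{t,\tau}=a(t)-a(t-\tau),\qquad \partial_\tau A_{t,\tau}=a(t-\tau),\qquad\text{so that}\qquad (\partial_t+\partial_\tau)A_{t,\tau}=a(t).
\end{equation*}
By Lemma \ref{derivativec}$(iii)$ (applied once with the parameter $t$ and once with the parameter $\tau$, then added) together with Lemma \ref{derivativex}$(1)$ and the identities $a^{ij}(t)(B_{t,\tau})_{ij}=\tr(a(t)B_{t,\tau})$ and $a^{ij}(t)(B_{t,\tau}x)_i(B_{t,\tau}x)_j=\langle a(t)B_{t,\tau}x,B_{t,\tau}x\rangle$, one obtains, for the Gaussian factor $G_{t,\tau}(x):=(\det B_{t,\tau})^{1/2}\exp\big(-\tfrac14\langle B_{t,\tau}x,x\rangle\big)$, the chain of identities
\begin{equation*}
(\partial_t+\partial_\tau)G_{t,\tau}=G_{t,\tau}\Big[-\tfrac12\tr(a(t)B_{t,\tau})+\tfrac14\langle a(t)B_{t,\tau}x,B_{t,\tau}x\rangle\Big]=a^{ij}(t)\,\partial_{ij}G_{t,\tau}.
\end{equation*}
Hence $(\partial_t+\partial_\tau-a^{ij}(t)\partial_{ij})G_{t,\tau}=0$, and multiplying $p=\chi_{\tau>0}e^{-\tau}G_{t,\tau}$ by $\chi_{\tau>0}e^{-\tau}$ produces exactly the extra $+1$ term through $\partial_\tau e^{-\tau}=-e^{-\tau}$, which is $(ii)$. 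This is the only place where any care is needed; the main obstacle is precisely to organize the two computational lemmas so that they close up through the single identity $(\partial_t+\partial_\tau)A_{t,\tau}=a(t)$ (which, incidentally, makes the constant-coefficient case transparent, as then $\partial_tA_{t,\tau}=0$).

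For the size and smoothness estimates $(iii)$ I would differentiate $p$ explicitly and control the outcome by uniform ellipticity. From $\Lambda\tau|\xi|^2\le\langle A_{t,\tau}\xi,\xi\rangle\le\Lambda^{-1}\tau|\xi|^2$ the eigenvalues of $A_{t,\tau}$ lie in $[\Lambda\tau,\Lambda^{-1}\tau]$, so $\det A_{t,\tau}\approx\tau^{n}$ (whence $(\det B_{t,\tau})^{1/2}\approx\tau^{-n/2}$), $\|B_{t,\tau}\|\le(\Lambda\tau)^{-1}$, and $\langle B_{t,\tau}x,x\rangle\ge\Lambda|x|^2/\tau$. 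Feeding these bounds, the formulas of Lemmas \ref{derivativex} and \ref{derivativec}, the elementary relation $\partial_{x_i}\exp(-\tfrac14\langle Bx,x\rangle)=-\tfrac12(Bx)_i\exp(-\tfrac14\langle Bx,x\rangle)$, and $\partial_tB_{t,\tau}=-B_{t,\tau}(\partial_tA_{t,\tau})B_{t,\tau}$ (and its $\tau$-analogue) into the differentiated expressions, each derivative of $p$ becomes a finite sum of terms $\chi_{\tau>0}e^{-\tau}\,G_{t,\tau}(x)$ times monomials in the entries of $B_{t,\tau}$ and of $B_{t,\tau}x$. One then absorbs each monomial of type $(|x|^2/\tau)^m$ into the Gaussian by $s^me^{-\delta s}\le C_{m,\delta}$ for $s\ge0$, and trades the decaying factor $e^{-\tau}$ for the required negative powers of $\tau$ by $\sup_{s>0}s^ke^{-s/2}<\infty$; the latter mechanism is exactly why the decay drops from $e^{-\tau}$ to $e^{-\tau/2}$ once a $\tau$-derivative falls on the $e^{-\tau}$ factor. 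Beyond securing the ellipticity bounds, this step is pure bookkeeping.

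Finally, $(v)$ reduces to the Chapman--Kolmogorov identity $\int_{\R^n}p(t,\tau_1,y)\,p(t-\tau_1,\tau_2,w-y)\,dy=p(t,\tau_1+\tau_2,w)$, which follows from the Gaussian convolution rule: $p(t,\tau,\cdot)$ is $\chi_{\tau>0}e^{-\tau}$ times the density of the centred Gaussian with covariance $2A_{t,\tau}$, so the left-hand side is $e^{-\tau_1}e^{-\tau_2}=e^{-(\tau_1+\tau_2)}$ times the density of the Gaussian with covariance $2A_{t,\tau_1}+2A_{t-\tau_1,\tau_2}$, and $A_{t,\tau_1}+A_{t-\tau_1,\tau_2}=\int_{t-\tau_1}^{t}a(r)\,dr+\int_{t-\tau_1-\tau_2}^{t-\tau_1}a(r)\,dr=A_{t,\tau_1+\tau_2}$. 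Substituting this into the double integral defining $T_{\tau_1}T_{\tau_2}f$ and changing variables yields $T_{\tau_1}T_{\tau_2}f=T_{\tau_1+\tau_2}f$; alternatively one takes the partial Fourier transform in $x$ and uses $(iv)$, since $\widehat{T_\tau f}(t,\xi)$ is then a pointwise Gaussian multiple of $\hat f(t-\tau,\xi)$. One caveat throughout: $A_{t,\tau}$ is only Lipschitz, not $C^1$, in $t$ (as $a$ is merely bounded and measurable), so Lemmas \ref{derivativex}--\ref{derivativec} are applied at points of $t$-differentiability, i.e.\ for a.e.\ $t$, which is all that is needed later.
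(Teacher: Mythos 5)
Your proposal is correct and takes essentially the same route as the paper: direct computation from the explicit Gaussian formula \eqref{GW1}, using Lemmas \ref{derivativex} and \ref{derivativec} for $(ii)$ and $(iii)$, the change of variables $\bar{x}=B_{t,\tau}^{1/2}x$ for $(i)$ and $(iv)$, and the additivity $A_{t,\tau_1}+A_{t-\tau_1,\tau_2}=A_{t,\tau_1+\tau_2}$ for $(v)$. Your reorganization of $(ii)$ around the single identity $(\partial_t+\partial_\tau)A_{t,\tau}=a(t)$ is a slightly cleaner bookkeeping of the same cancellation that the paper carries out by computing $\partial_tp$ and $\partial_\tau p$ separately.
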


\begin{proof}
In order to check $(i)$ it is enough to perform the change of variables $\bar{x}=B^{1/2}_{t,\tau}x$.
To see $(ii)$ we shall compute the derivatives of the function $p(t,\tau,x)$. As
$$\partial_t A_{t,\tau}=a(t)-a(t-\tau),\quad\partial_\tau A_{t,\tau}=a(t-\tau),$$
by Lemma \ref{derivativec}, 
$$\partial_t p(t,\tau,x)=p(t,\tau,x)\bigg[-\frac12\tr\big((a(t)-a(t-\tau))B_{t,\tau}\big)
+\frac14\langle(a(t)-a(t-\tau))B_{t,\tau}x,B_{t,\tau}x\rangle\bigg],$$
and
$$\partial_\tau p(t,\tau,x)=p(t,\tau,x)\bigg[-\frac12\tr(a(t-\tau)B_{t,\tau})+
\frac14\langle a(t-\tau)B_{t,\tau}x, B_{t,\tau}x\rangle-1\bigg].$$
On the other hand, by Lemma \ref{derivativex},
\begin{align*}
-a^{ij}(t)\partial_{ij}p(t,\tau,x) &= p(t,\tau,x)\bigg[\frac12a^{ij}(t)(B_{t,\tau})_{ij}-
\frac14a^{ij}(t)(B_{t,\tau}x)_i(B_{t,\tau}x)_j\bigg] \\
&= p(t,\tau,x)\bigg[\frac12\tr(a(t)B_{t,\tau})-\frac14\langle a(t)B_{t,\tau}x,B_{t,\tau}x)\rangle\bigg]
\end{align*}
and we get $(ii)$.
Property $(iii)$ follows easily from Lemmas \ref{derivativex} and \ref{derivativec}, the ellipticity of the matrix $a^{ij}(t)$ and by using the equation.
To see $(iv)$ we perform the change of variables $\bar{x}=B^{1/2}_{t,\tau}x$ 
and use the formula for the Fourier transform of an exponential function.
For $(v)$, observe that  the matrix $A^{ij}(t,\tau)$ satisfies
$$A^{ij}(t,\tau_1+\tau_2)= A^{ij}(t-\tau_1,\tau_2)+A^{ij}(t,\tau_1),$$
for any $\tau_1,\tau_2>0$, and use $(iv)$.
\end{proof}

\subsection{Proof of Theorem \ref{fundamental 1}}\label{derivation}

By Lemma \ref{nucleo}$(i)$, for any $1\leq p\leq\infty$,
$$\|u\|_{L^p(\R^{n+1})}\leq\int_{\R^{n+1}}p(t,\tau,y)\|f\|_{L^p(\R^{n+1})}\,dy\,d\tau
=\|f\|_{L^p(\R^{n+1})}.$$

Assume next that $f\in C^2_c(\R^{n+1})$. Uniqueness of bounded classical solutions to \eqref{EQ1}
follows from \cite[Theorem~8.1.7]{Krylov book}.
Now, the argument we just performed above also shows that we can interchange the integral
and the second derivatives $\partial_{ij}$ to get
\begin{align*}
\partial_{ij}u(t,x) &= \iint_{\R^{n+1}}p(t,\tau,y)\partial_{x_ix_j}f(t-\tau,x-y)\,dy\ d\tau \\
&= \lim_{\varepsilon \rightarrow 0 }\iint_{\Omega_\varepsilon}p(t,\tau,y)\partial_{y_iy_j}f(t-\tau,x-y)\,dy\,d\tau.
\end{align*}
where $ \Omega_\varepsilon=\{(\tau,y):\max(\tau^{1/2},|y|)>\varepsilon\}$. Integration by parts gives
\begin{align*}
\iint_{\Omega_\varepsilon}p(t,\tau,y)&\partial_{y_iy_j}f(t-\tau,x-y)\,dy\,d\tau \\
&=\iint_{\partial{\Omega_\varepsilon}}p(t,\tau,y)\partial_{y_j}f(t-\tau,x-y)\nu_i\,dS(y,\tau)\\
&\quad  - \iint_{\Omega_\varepsilon}\partial_{y_i}p(t,\tau,y)\partial_{y_j}f(t-\tau,x-y)\,dy\,d\tau,
\end{align*}
where $\nu_i$ is the $i$th-component of the exterior unit normal vector to $\partial{\Omega_\varepsilon}$.
Let us write
\begin{equation}\label{eq:omega1}
\partial{\Omega_\varepsilon} = \partial{\Omega_\varepsilon^1}\cup
\partial{\Omega_\varepsilon^2}\cup\partial{\Omega_\varepsilon^3},
\end{equation}
where (remember that $\tau>0$)
\begin{equation}\label{eq:omega2}
\begin{aligned}
\partial{\Omega_\varepsilon^1}&=\{(\tau,y):  |y| <  \varepsilon,\,\tau^\frac{1}{2} = \varepsilon\},\\
\partial{\Omega_\varepsilon^2}&= \{(\tau,y):  |y| = \varepsilon, 0< \tau^\frac{1}{2} <\varepsilon \},\\
\partial{\Omega_\varepsilon^3}&= \{(\tau,y):  |y| < \varepsilon,\, \tau^\frac{1}{2} = 0 \}.
\end{aligned}
\end{equation}
The exterior unit normal vector on
$\partial{\Omega_\varepsilon^1}$ is $(-1,0,\ldots,0)\in\R^{n+1}$. Then
$$\iint_{\partial{\Omega_\varepsilon^1}}p(t,\tau,y)\partial_{y_j}f(t-\tau,x-y)\nu_i\,dS(y,\tau)= 0,$$
and the same is true for the boundary integral over $\Omega_\varepsilon^3$. 
On the other hand, the unit normal  of $\partial{\Omega_\varepsilon^2}$ is $\frac{1}{\varepsilon}(0,-y)$. Hence, by Lemma \ref{nucleo},
$$\iint_{\partial{\Omega_\varepsilon^2}}p(t,\tau,y)|\partial_{y_j}f(t-\tau,x-y)|\,dS(y,\tau)
\leq C\int_0^{\varepsilon^2}\frac{e^{-\varepsilon^2/(4\tau)}}{\tau^{n/2}}\varepsilon^{n-1}\,d\tau  
=C\varepsilon\rightarrow0,$$
as $\varepsilon\to0$.
Again, integration by parts together with an analogous discussion of the boundary integrals gives
\begin{align*}
-\iint_{\Omega_\varepsilon}\partial_{y_i}p(t,\tau,y)&\partial_{y_j}f(t-\tau,x-y)\,dy\,d\tau \\
&=\iint_{\Omega_\varepsilon}\partial_{y_iy_j}p(t,\tau,y)f(t-\tau,x-y)\,dy\,d\tau\\
&\quad-\iint_{\partial{\Omega^2_\varepsilon}}\partial_{y_i}p(t,\tau,y)f(t-\tau,x-y)\nu_j\,dS(y,\tau) \\
&=:I_1-I_2.
\end{align*}
The integral $I_1$ is the first term in \eqref{equation2 x}. Let us rewrite $I_2$ as
\begin{align*}
I_2 &=\iint_{\partial{\Omega_\varepsilon^2}}\partial_{y_i}p(t,\tau,y)
\big(f(t-\tau,x-y)-f(t,x)\big)\frac{y_j}{\varepsilon}\,dS(y,\tau)\\
&\quad+f(t,x)\iint_{\partial{\Omega_\varepsilon^2}}\partial_{y_i}p(t,\tau,y)\frac{y_j}{\varepsilon}\,dS(y,\tau) \\
&=: I_{21}+I_{22}.
\end{align*}
By Lemma \ref{nucleo} and the Mean Value  Theorem  we get 
\begin{align*}
|I_{21}| &\leq C\int_0^{\varepsilon^2} \int_{|y|=\varepsilon}
\frac{|y|e^{-|y|^2/(4\tau)}}{\tau^{n/2+1}}(\tau+|y|)\,dS(y)\,d\tau\leq
C \int_0^{\varepsilon^2}\frac{ \varepsilon^{n+1}e^{-\varepsilon^2/(4\tau)}}{\tau^{n/2+1}}\,d\tau
=C\varepsilon\rightarrow0,
\end{align*}
as $\varepsilon\to0$. Now the integral in $I_{22}$ depends on $\varepsilon$ and
$a^{ij}(t)$, so we call it $I^\varepsilon_{ij}(a)(t)$. By \eqref{firstd},
\begin{align*}
I^\varepsilon_{ij}(a)(t)&=-\frac{1}{2}\int_0^{\varepsilon^2}\int_{|y|=\varepsilon}
p(t,\tau,y)(B_{t,\tau} y)_i\frac{y_j}{\varepsilon}\,dS(y)\,d\tau \\
&=-\frac12\int_0^{\varepsilon^2}e^{-\tau}\int_{|y|=\varepsilon} 
\frac{\exp\big(-\tfrac{1}{4}\langle B_{t,\tau}y,y\rangle\big)}{(4\pi)^{n/2} (\det B_{t,\tau})^{-1/2}}\,
(B_{t,\tau} y)_i\frac{y_j}{\varepsilon}\,dS(y)\,d\tau \\
&=-\frac{1}{2}\int_0^{1}e^{-\varepsilon^2\tau}\int_{|y|=1} 
\frac{\exp\big(-\tfrac{\varepsilon^2}{4}\langle B_{t,\varepsilon^2\tau}y,y\rangle\big)}{(4\pi)^{n/2}
(\det(\varepsilon^2B_{t,\varepsilon^2\tau}))^{-1/2}}\,
(\varepsilon^2B_{t,\varepsilon^2\tau}y)_iy_j\,dS(y)\,d\tau.
\end{align*}
Observe that 
$\varepsilon^2 B_{t,\varepsilon^2 \tau}\varepsilon^{-2}A_{t,\varepsilon^2 \tau}=I$. Hence 
$\lim_{\varepsilon\rightarrow 0} \varepsilon^2 B_{t,\varepsilon^2 \tau}
= \big(\lim_{\varepsilon\rightarrow 0} \varepsilon^{-2} A_{t,\varepsilon^2 \tau}\big)^{-1}$. But 
$$\lim_{\varepsilon\rightarrow 0} \varepsilon^{-2}A_{t,\varepsilon^2 \tau}= \tau\lim_{\varepsilon\rightarrow 0}
\frac1{\tau\varepsilon^2} \int_{t-\varepsilon^2 \tau}^ta(r)\,dr= \tau a(t),$$
for a.e.~$t$. Hence, by taking the limit as $\varepsilon\to0$ in $I^\varepsilon_{ij}(a)(t)$, performing
the change of variables $1/\tau=r^2$, and using polar coordinates,
\begin{align*}
I_{ij}(a)(t) &= \lim_{\varepsilon\to0} I^\varepsilon_{ij}(a)(t) \\
&= -\frac{1}{2}\int_0^{1} \int_{|y|=1} 
\frac{\exp\big(-\tfrac{1}{4\tau}\langle a(t)^{-1}y,y\rangle\big)}{\tau^{n/2+1}(4\pi)^{n/2}(\det a(t))^{1/2}}\,
(a(t)^{-1}y)_iy_j\,dS(y)\,d\tau \\
&=\int_1^\infty\int_{|y|=1} 
\frac{\exp\big(-\tfrac{1}{4}\langle a(t)^{-1}ry,ry\rangle\big)}{(4\pi)^{n/2}(\det a(t))^{1/2}}\,
(a(t)^{-1}y)_iy_j\,dS(y)r^{n-1}\,dr \\
&=\int_{\{x: |2 a(t)^{1/2}x|\ge 1\}}\frac{e^{-|x|^2}}{\pi^{n/2}}\,\frac{(a(t)^{-1/2}x)_i (a(t)^{1/2} x)_j}{ |a(t)^{1/2} x|^2}\,dx.
\end{align*}
This finishes the proof of \eqref{equation2 x} and \eqref{matrizA}.

Now we compute $\partial_tu(t,x)$. In a similar fashion as before,
\begin{align*}
\partial_tu(t,x) &= \lim_{\varepsilon\rightarrow 0}
\Bigg[\iint_{\Omega_\varepsilon}\partial_tp(t,\tau,y) f(t-\tau,x-y)\,dy\,d\tau \\
&\qquad\qquad-\iint_{\Omega_\varepsilon}p(t,\tau,y)
\partial_\tau f(t-\tau,x-y)\,dy\,d\tau\Bigg]\\
&=\lim_{\varepsilon\rightarrow 0}\iint_{\Omega_\varepsilon}(\partial_t+\partial_\tau)p(t,\tau,y)f(t-\tau,x-y)\,dy\,d\tau \\
&\quad-\lim_{\varepsilon\rightarrow 0}\iint_{\partial{\Omega_\varepsilon}}p(t,\tau,y)f(t-\tau,x-y)\nu_\tau\,dS(y,\tau).
\end{align*}
Again, we decompose $\partial\Omega_\varepsilon$ as in \eqref{eq:omega1}--\eqref{eq:omega2}.
Clearly, $\nu_\tau=0$ on $\partial\Omega^2_\varepsilon$. On the other hand,
$$\iint_{\partial{\Omega^3_\varepsilon}}p(t,\tau,y)f(t-\tau,x-y)\,dS(y,\tau)
=\int_{|y|=\varepsilon}p(t,\tau,y)f(t-\tau,x-y)\,dS(y)=0.$$
Parallel to the spatial derivatives case we write
\begin{align*}
\iint_{\partial{\Omega^1_\varepsilon}}&p(t,\tau,y)f(t-\tau,x-y)\nu_\tau\,dS(y,\tau) \\
&= \iint_{\partial{\Omega^1_\varepsilon}}p(t,\tau,y)\big(f(t-\tau,x-y)-f(t,x)\big)\nu_\tau\,dS(y,\tau) \\
&\quad+f(t,x)\iint_{\partial{\Omega^1_\varepsilon}}p(t,\tau,y)\nu_\tau\,dS(y,\tau)\\
&=:J_1^\varepsilon+J_2^\varepsilon.
\end{align*}
Apply the Mean Value Theorem in $J_1^\varepsilon$ and Lemma \ref{nucleo} to get
\begin{align*}
|J_1^\varepsilon| &\leq C \int_{\tau=\varepsilon^2}\int_{|y|< \varepsilon} \frac{e^{-|y|^2/(4\tau)}}{\tau^{n/2}}(\tau+|y|)\,dy\,d\tau \\
&\leq \frac{C}{\varepsilon^{n-1}}\int_0^\varepsilon r^{n-1}e^{-r^2/(4\varepsilon^2)}\,dr
= C\varepsilon\rightarrow 0,
\end{align*}
as $\varepsilon\rightarrow0$, where we have assumed that $\varepsilon<1$. 
On the other hand, by a limit argument similar to the one used for $\partial_{ij}u$ before,
\begin{align*}
J_2^\varepsilon &= f(t,x)\int_{|y|<\varepsilon}e^{-\varepsilon^2}
\frac{\exp\big(-\tfrac{1}{4}\langle B_{t,\varepsilon^2}y,y\rangle\big)}{(4\pi)^{n/2} (\det B_{t,\varepsilon^2})^{-1/2}}\,dy \\
&=f(t,x)\int_0^1\int_{|y|=1}e^{-\varepsilon^2}
\frac{\exp\big(-\tfrac{\varepsilon^2}{4}\langle B_{t,\varepsilon^2}ry,ry\rangle\big)}{(4\pi)^{n/2}
(\det(\varepsilon^2B_{t,\varepsilon^2}))^{-1/2}}\,dS(y)r^{n-1}\,dr \\
&\longrightarrow f(t,x)\int_0^1\int_{|y|=1}
\frac{\exp\big(-\tfrac{1}{4}\langle a(t)^{-1}ry,ry\rangle\big)}{(4\pi)^{n/2}
(\det a(t))^{1/2}}\,dS(y)r^{n-1}\,dr \\
&\qquad =f(t,x)\int_{\{x: |2a(t)^{1/2}x|\le 1\}}\frac{e^{-|x|^2}}{\pi^{n/2}}\,dx=f(t,x)J(a)(t).
\end{align*}
This proves \eqref{equation2 t} and \eqref{matrizb}.

We finally check that $\partial_tu-a^{ij}(t)\partial_{ij}u+u=f$. Indeed, since
$$a^{ij}(t)\frac{(a(t)^{-1/2}x)_j (a(t)^{1/2} x)_i}{ |a(t)^{1/2} x|^2}=1$$
and $\displaystyle\int_{\mathbb{R}^n}e^{-|x|^2}\,dx=\pi^{n/2}$,
it is clear that
$$J(a)(t)+a^{ij}(t)I_{ij}(a)(t)=1.$$
The conclusion follows from Lemma \ref{nucleo}$(ii)$.
This completes the proof of Theorem \ref{fundamental 1}.\qed

\subsection{Proof of Theorem \ref{fundamental 3}}\label{subsection:cauchy}

The $L^p$ estimates of $v$ in terms of $L^p$ norms of $f$ and $g$ follow like in the case of
$u$ and by using Lemma \ref{nucleo}.

As with Theorem \ref{fundamental 1}, uniqueness is a consequence of \cite[Theorem~8.1.7]{Krylov book}.

For the computation of derivatives, by linearity, it is enough to study the problems 
$$\begin{cases} 
\partial_t\varphi-a^{ij}(t)\partial_{ij}\varphi+\varphi=0,&\hbox{for}~t >0,~x\in\R^n,\\
\varphi(0,x)=g(x),&\hbox{for}~x\in\mathbb{R}^n,
\end{cases}$$
and
$$\begin{cases} 
\partial_t\psi-a_{ij}(t)\partial_{ij}\psi+\psi=f,&\hbox{for}~t >0,~x\in\R^n,\\
\psi(0,x)=0,&\hbox{for}~x\in\mathbb{R}^n,
\end{cases}$$
separately and then take $v=\varphi+\psi$.

On one hand, the solution $\varphi$ is given by
$$\varphi(t,x)=\int_{\R^n}p(t,t,y)g(x-y)\,dy,\quad t\geq0,~x\in\R^n.$$
It can be directly checked that this produces all the terms and properties
in the statement related to the initial datum $g$.

For the second problem, the solution is
$$\psi(t,x)=\int_0^t\int_{\R^n}p(t,\tau,y)f(t-\tau,x-y)\,dy\,d\tau.$$
Indeed, clearly $\psi(0,x)=0$. To derive the formulas for the derivatives,
we proceed as in the proof of Theorem \ref{fundamental 1} but
with the appropriate changes due to the nature of the new ambient space $\mathbb{R}^{n+1}_+$.
We sketch the computation here and leave details to the interested reader.
We begin as in the proof of Theorem \ref{fundamental 1}, but replacing the set
$\Omega_\varepsilon$ by the set $\Sigma_\varepsilon = \{(\tau,y)\in\R^{n+1}_+:\tau>\varepsilon\}$.
Using integration by parts twice in space we obtain the first term in formula \eqref{equation3 x}.
For the derivative with respect to $t$, we notice that
$\partial \Sigma_\varepsilon= \{(\tau,y)\in\R^{n+1}_+:\tau=\varepsilon\}$.
Then parametric derivation and integration by parts yields 
\begin{align*}
\partial_t\psi(t,x) &= \lim_{\varepsilon\to0}\int_\varepsilon^t\int_{\mathbb{R}^n}
(\partial_t+\partial_\tau )p(t,\tau,y)f(t-\tau,x-y)\,d\tau\,dy \\
&\quad+\lim_{\varepsilon\to0}\int_{\mathbb{R}^n}p(t,\varepsilon,y)f(t-\varepsilon,x-y)\,dy
- \int_{\mathbb{R}^n}p(t,t,y)f(0,x-y)\,dy \\
&= \lim_{\varepsilon\to0}\int_\varepsilon^t \int_{\mathbb{R}^n}(\partial_t+\partial_\tau) p(t,\tau,y) f(t-\tau,x-y)\,d\tau\,dy\\
&\quad+\lim_{\varepsilon \rightarrow 0}\int_{\mathbb{R}^n}p(t,\varepsilon,y)\big(f(t-\varepsilon,x-y)-f(t,x-y)\big)\,dy\\
&\quad+\lim_{\varepsilon \rightarrow 0}  \int_{\mathbb{R}^n} p(t,\varepsilon,y)f(t,x-y)\,dy\\
&=  \lim_{\varepsilon \rightarrow 0} \int_\varepsilon^t \int_{\mathbb{R}^n}(\partial_t+ \partial_\tau) p(t,\tau,y)f(t-\tau,x-y)\,d\tau\,dy+f(t,x).
\end{align*}
The proof of Theorem \ref{fundamental 3} is complete.\qed

\section{Weighted vector-valued Calder\'on--Zygmund singular integrals \\ on spaces of homogeneous type}\label{Section:eth}

Let $\X$ be a set. A function $\rho:\X\times\X\to[0,\infty)$
is called a quasidistance in $\X$ if for any $\x,\y,\z\in\X$ we have: (1) $\rho(\x,\y)=0$ if and only if $\x=\y$,
(2) $\rho(\x,\y)=\rho(\y,\x)$, and (3) $\rho(\x,\z)\le\kappa(\rho(\x,\y)+\rho(\y,\z))$ for some constant $\kappa\geq1$.
We assume that $\X$ has the topology induced by 
the open balls $B(\x,r)$ with center at $\x\in\X$ and radius $r>0$ defined as $B(\x,r):=\{\y\in\X:\rho(\x,\y)<r\}$.
Let $\mu$ be a positive Borel measure on $(\X,\rho)$
such that, for some universal constant $C_d>0$, we have $\mu(B(\x,2r)) \le C_d\mu(B(\x,r))$ (the so-called doubling property),
for every $\x\in\X$ and $r >0$.
Then $(\X,\rho,\mu)$ is called a \textit{space of homogeneous type}.

It is clear that $\R^n$ with the usual Euclidean distance and the Lebesgue measure is a space 
of homogeneous type. See Remark \ref{rem:parabolic metric space} for the example of the parabolic metric space.

Let $w:\X\to\R$ be a weight, namely, a measurable function such that $w(\x)>0$
for $\mu$-a.e. $\x\in\X$.
Given a Banach space $E$, we denote by $L^p_E(\X,w)=L^p(\X,w;E)$, $1\leq p\leq\infty$, the space
of strongly measurable $E$-valued
functions $f$ defined on $\X$ such that $\|f\|_E$ belongs to $L^p(\X,w(\x)d\mu)$.
When $w=1$ we just write $L^p_E(\X)=L^p(\X;E)$. The norms are given by
$$\|f\|_{L^p(\X,w;E)}^p=\int_{\X}\|f(\x)\|_{E}^pw(\x)\,d\mu,\quad\hbox{when}~1\leq p<\infty,$$
and
$$\|f\|_{L^\infty(\X,w;E)}=\|f\|_{L^\infty(\X;E)}=\operatornamewithlimits{ess\,sup}_{\x\in\X}\|f(\x)\|_{E},$$
where the supremum is taken with respect to $\mu$. We use the notation
$$L^\infty_{E,c}(\X)=L^\infty_{c}(\X;E)=\{f\in L^\infty(\X;E):\supp(f)~\hbox{is compact in}~\X\}.$$

\begin{defn}[Vector-valued Calder\'on--Zygmund operator on $(\X,\rho,\mu)$]\label{CZ}
Let $E,F$ be Banach spaces. We say that a linear  operator $T$ on 
a space of homogeneous type $(\X,\rho,\mu)$ is a
(vector-valued) Calder\'on--Zygmund operator if it satisfies the following conditions.
\begin{itemize}
\item [(I)] Either there exists $1\le p_0 <\infty$ such that $T$ is bounded from
$L^{p_0}(\X;E)$ into $L^{p_0}(\X;F)$; or $T$ is bounded from $L^\infty_c(\X;E)$ into $L^\infty(\X;F)$.
\item[(II)] For  bounded $E$-valued functions $f$ with compact support, $Tf$ can be represented as 
\begin{equation}\label{CZ2}
Tf(\x) = \int_{\X}K(\x,\y)f(\y)\,d\mu, \quad\hbox{for}~\x\notin{\rm supp}(f),
\end{equation}
where, for fixed $\x,\y\in\X$ such that $\x\neq\y$,
the kernel $K(\x,\y)$ belongs to $\mathcal{L} (E,F)$, the space of bounded linear operators from $E$ to $F$ and, moreover, 
satisfies the following estimates:
\begin{itemize}
\item[(II.1)]  $\displaystyle \|K(\x,\y) \|\le  \frac{C}{\mu(B(\x,\rho(\x,\y))}$, for every $\x\neq\y$;
\item[(II.2)] $\displaystyle   \|K(\x,\y) - K(\x,\y_0) \|+\|K(\y,\x) - K(\y_0,\x)\|
\le C\frac{\rho(\y,\y_0)}{\rho(\x,\y_0)\mu(B(\y_0,\rho(\x,\y_0))},$
whenever $\rho(\x,\y_0)>2\rho(\y,\y_0)$;
\end{itemize}
where $\|\cdot\|$ denotes the operator norm, for some constant $C>0$.
\end{itemize}
\end{defn}

A weight $w$ on $(\X,\rho,\mu)$ is a Muckenhoupt $A_p(\X)$ weight, $1<p<\infty$,
if there exists a constant $C_w>0$ such that
$$\bigg(\frac{1}{\mu(B)}\int_Bw(\x)\,d\mu\bigg)\bigg(\frac{1}{\mu(B)}\int_Bw(\x)^{1/(1-p)}\,d\mu\bigg)^{p-1}\leq C_w,$$
for every metric ball $B\subset\X$. We say that $w\in A_1(\X)$ if there is a constant $C_w>0$ such that
$$\frac{1}{\mu(B)}\int_{B}w(\y)\,d\mu(\y)\leq C_w\inf_{\x\in B}w(\x),$$
for every metric ball $B\subset\X$.

Let $(F,\|\cdot\|_F)$ be a Banach space. The $F$-sharp maximal function $M_F^\#$ is given by
\begin{equation}\label{eq:sharp maximal}
M^\#_Fg(\x):=\sup_{B\subset\X}\frac{1}{\mu(B)}\int_B\|g(\y)-g_B\|_F\,d\mu,
\end{equation}
where $g$ is a locally integrable function on $(\X,\rho,\mu)$ with values in $F$.
The supremum above is taken over
all metric balls $B\subset\X$ that contain the point $\x$, and $\displaystyle g_B:=\frac{1}{\mu(B)}\int_Bg(\y)\,d\mu$.
We say that $g$ is in the $F$-valued $BMO_F(\X)=BMO(\X;F)$ space if
$$\|g\|_{BMO(\X;F)}=\|M_F^\# g\|_{L^\infty(\X)}<\infty.$$

\begin{thm}[Calder\'on--Zygmund Theorem]\label{thm:CZ}
If $T$ is a vector-valued Calder\'on--Zygmund
operator on a space of homogeneous type $(\X,\rho,\mu)$ as defined above
then $T$ extends as a bounded operator
\begin{itemize}
\item[$(a)$] from $L^p(\X,w;E)$ into $L^p(\X,w;F)$, for any $1<p<\infty$ and $w\in A_p(\X)$;
\item[$(b)$] from $L^1(\X,w;E)$ into weak-$L^1(\X,w;F)$, for any $w\in A_1(\X)$; and
\item[$(c)$] from $L^\infty_{c}(\X;E)$ into $BMO(\X;F)$.
\end{itemize}
Moreover, the maximal operator of the truncations defined by
\begin{equation}\label{truncation}
T^\ast f(\x) =\sup_{\varepsilon>0}\|T_\varepsilon f(\x)\|_F=
\sup_{\varepsilon>0}\bigg\| \int_{\rho(\x,\y)>\varepsilon} K(\x,\y)f(\y)\,d\mu\bigg\|_F
\end{equation}
is a bounded operator
\begin{itemize}
\item[$(d)$] from $L^p(\X,w;E)$ into $L^p(\X,w)$, for any $1<p<\infty$ and $w\in A_p(\X)$; and
\item[$(e)$] from $L^1(\X,w;E)$ into weak-$L^1(\X,w)$, for any $w\in A_1(\X)$.
\end{itemize}
In particular, the set
$$\Big\{f\in L^p(\X,w;E):\lim_{\varepsilon\to0^+}T_\varepsilon f(\x)
~\hbox{exists in}~F~\hbox{for}~\mu\hbox{-a.e.}~\x\in\X\Big\},$$
is closed in $L^p(\X,w;E)$, for every $1\leq p<\infty$.
\end{thm}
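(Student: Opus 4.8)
The plan is to reduce every assertion to the already-classical scalar Calderón--Zygmund machinery on spaces of homogeneous type, using the hypotheses (I) and (II) of Definition \ref{CZ} as the only structural inputs. The argument splits according to which of the two alternatives in (I) holds, but the kernel estimates (II.1)--(II.2) are used identically in both cases.

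First I would treat the case where $T$ is bounded on $L^{p_0}(\X;E)$ for some $1\le p_0<\infty$. The key observation is that the size condition (II.1) together with the doubling property of $\mu$ gives the standard integrability $\int_{\rho(\x,\y)>2\kappa\rho(\y,\y_0)}\|K(\x,\y)-K(\x,\y_0)\|\,d\mu\le C$ (and symmetrically in the first variable), which is exactly the Hörmander-type condition needed for the Calderón--Zygmund decomposition to work. One then runs the usual argument: given $f\in L^1(\X,w;E)$ with, say, $w\equiv1$ first, perform the Calderón--Zygmund decomposition of $\|f\|_E$ at height $\lambda$ relative to the quasimetric balls (this is available in spaces of homogeneous type after a standard adjustment of the quasidistance, e.g.\ via the Macías--Segovia regularization so that balls are nicely nested), split $f=g+b$ with $b=\sum_i b_i$ supported on disjoint balls $B_i$ of mean zero, estimate the good part by the $L^{p_0}$ bound and interpolation/Chebyshev, and estimate the bad part away from $2\kappa B_i$ using (II.2); the contribution inside $\bigcup 2\kappa B_i$ is controlled by $\frac{C}{\lambda}\|f\|_1$ using doubling. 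This yields the weak $(1,1)$ bound (unweighted), hence by Marcinkiewicz interpolation with the $L^{p_0}$ bound and duality (the transpose operator has kernel $K(\y,\x)^*$, which satisfies the same estimates by the symmetric form of (II.2)) one gets strong $(p,p)$ for all $1<p<\infty$. The weighted statements $(a)$ and $(b)$ then follow from the standard good-$\lambda$ inequality between $T^\ast f$ and the Hardy--Littlewood maximal function $Mf$ on $(\X,\rho,\mu)$, valid under (II.1)--(II.2), exactly as in the Euclidean theory; here one simultaneously obtains the pointwise control $T^\ast f(\x)\le C(M(\|Tf\|_F)(\x)+M(\|f\|_E)(\x))$ that gives $(d)$ and $(e)$. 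For $(c)$ one shows directly that for $f\in L^\infty_c(\X;E)$ and any ball $B$, writing $f=f\chi_{2\kappa B}+f\chi_{(2\kappa B)^c}$, the first piece is handled by the $L^{p_0}$ (or $L^2$) bound and Jensen, and the second by subtracting the constant $\int_{(2\kappa B)^c}K(\x_B,\y)f(\y)\,d\mu$ and invoking (II.2) to bound $\frac{1}{\mu(B)}\int_B\|Tf(\x)-c_B\|_F\,d\mu\le C\|f\|_\infty$.

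The case where instead $T$ is bounded from $L^\infty_c(\X;E)$ into $L^\infty(\X;F)$ is handled by an analogous but "dual" scheme: the $L^\infty\to BMO$ estimate $(c)$ is now essentially built in from the hypothesis plus the kernel cancellation (II.2), exactly as in the second alternative argument sketched above but with the trivial $L^\infty$ bound replacing the $L^{p_0}$ bound on the local piece; and then one obtains $L^p$ boundedness for large $p$ by interpolation between $L^\infty\to BMO$ and a weak $(1,1)$ estimate — the latter proved again via the Calderón--Zygmund decomposition, which only ever uses (II.1)--(II.2) and never the $L^{p_0}$ hypothesis for the bad part, while the good part is controlled in $L^\infty$ hence in $L^2_{loc}$. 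From $L^p$ for one finite $p$ one is back in the previous case. For the weighted versions, once one has any unweighted $L^p$ bound one invokes the good-$\lambda$ inequality as before; alternatively one uses the extrapolation theorem of Rubio de Francia in spaces of homogeneous type to pass from a single weighted $L^{p}$ estimate to all $A_p$ weights.

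Finally, the closedness assertion: if $f\in L^p(\X,w;E)$ is such that $\lim_{\varepsilon\to0^+}T_\varepsilon f(\x)$ exists in $F$ for $\mu$-a.e.\ $\x$, one notes that the oscillation $\Omega f(\x):=\limsup_{\varepsilon,\varepsilon'\to0}\|T_\varepsilon f(\x)-T_{\varepsilon'}f(\x)\|_F$ satisfies $\Omega f(\x)\le 2T^\ast f(\x)$, and $\Omega$ is sublinear; since $T^\ast$ is bounded on $L^p(\X,w)$ (resp.\ weak-$L^1$) by $(d)$ (resp.\ $(e)$), a standard argument — approximate $f$ in $L^p(\X,w;E)$ by a sequence $f_k$ for which the limit exists everywhere (e.g.\ $E$-valued functions for which the truncated integrals are Cauchy, a dense class by hypothesis in the set itself), use $\Omega f\le \Omega(f-f_k)+\Omega f_k=\Omega(f-f_k)$, and take $L^p$/weak-$L^1$ quasinorms — shows the limit set is closed.

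\medskip
\textbf{Main obstacle.} The genuinely delicate point is carrying out the Calderón--Zygmund decomposition and the good-$\lambda$ inequality cleanly in the quasimetric (not metric) setting with only a doubling measure: quasimetric balls need not be open or nested in a convenient way, so one must either work with the Christ dyadic cubes on $(\X,\rho,\mu)$ or replace $\rho$ by an equivalent quasidistance (Macías--Segovia) for which balls behave well, and then check that conditions (II.1)--(II.2) are stable under this replacement — which they are, because they are formulated in terms of ratios $\rho(\y,\y_0)/\rho(\x,\y_0)$ and of $\mu$ of balls, both of which change only by bounded factors. All the rest is a faithful transcription of the Euclidean vector-valued theory (as in the references \cite{RT, RRT} cited by the authors), since $E$- and $F$-valued integrands cause no new difficulty: the triangle inequality in $E$ and $F$ is all that is ever used.
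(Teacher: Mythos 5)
The paper never proves Theorem \ref{thm:CZ}: it is quoted as background and attributed to the literature (\cite{Calderon-Zygmund,Duo,MST,RRT,Francisco,RT}), so there is no in-paper argument to compare against; your sketch reconstructs precisely the route of those references (Calder\'on--Zygmund decomposition via Christ cubes or the Mac\'ias--Segovia regularization, weak $(1,1)$ plus interpolation and duality, Cotlar/good-$\lambda$ arguments for $T^\ast$ and the $A_p$ bounds, the ball-splitting argument for the $BMO$ endpoint, and the Banach principle for the closedness statement), and as an outline it is sound. Three points should be tightened. First, the H\"ormander-type bound $\int_{\rho(\x,\y)>2\kappa\rho(\y,\y_0)}\|K(\x,\y)-K(\x,\y_0)\|\,d\mu(\x)\le C$ is a consequence of the smoothness condition (II.2) summed over dyadic annuli together with doubling, not of the size condition (II.1) as you wrote; (II.1) alone gives nothing of the sort, though you do use (II.2) correctly later. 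Second, in the alternative case $T:L^\infty_c(\X;E)\to L^\infty(\X;F)$, the good part $g$ of the decomposition is not handled by ``$L^\infty$ hence $L^2_{loc}$ control'': the correct (and standard) observation is that $\|Tg\|_{L^\infty(\X;F)}\le C\lambda$, so after enlarging the threshold by a fixed constant the level set $\{\|Tg\|_F>C'\lambda\}$ is empty, which is what makes the weak $(1,1)$ argument run without any finite-exponent hypothesis. Third, for the closedness assertion no dense class is needed: take $f_k$ in the set with $f_k\to f$ in $L^p(\X,w;E)$, note $\Omega f\le \Omega(f-f_k)+\Omega f_k=\Omega(f-f_k)\le 2T^\ast(f-f_k)$ $\mu$-a.e., apply the weak-type bounds $(d)$--$(e)$ to conclude $\Omega f=0$ a.e., and use completeness of $F$ to obtain the a.e.\ limit; your phrasing about a ``dense class by hypothesis in the set itself'' conflates closedness with density and should simply be dropped.
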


For full details about the theory presented above see
\cite{Calderon-Zygmund,Duo,MST, RRT, Francisco, RT}.

Next we generalize a result from \cite{Segovia-Torrea} to the context of vector-valued
Calder\'on--Zygmund operators on spaces of homogeneous type. Observe that the following
statement generalizes part $(c)$ of Theorem \ref{thm:CZ} to include weights.
The proof will be provided in the Appendix.

\begin{thm}[Segovia--Torrea]\label{thm:Segovia-Torrea}
Let $T$ be a vector-valued Calder\'on--Zygmund operator
on a space of homogeneous type $(\X,\rho,\mu)$ as defined above.
Suppose that $w>0$ is a weight such that $w^{-1}\in A_1(\X)$. Then,
for every $f\in L^\infty_c(\X;E)$ such that $wf\in L^\infty(\X;E)$, we have
$$\|wM_F^\# (Tf)\|_{L^\infty(\X)}\leq C_{w,T,\rho,\mu}\|wf\|_{L^\infty(\X;E)},$$
where $M_F^\#$ is the $F$-sharp maximal function \eqref{eq:sharp maximal}.
\end{thm}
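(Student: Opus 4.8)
The plan is to reduce the statement to the ball-by-ball estimate
\begin{equation*}
\frac{1}{\mu(B)}\int_B\|Tf(\y)-(Tf)_B\|_F\,d\mu\le C\,\|wf\|_{L^\infty(\X;E)}\,\frac{1}{\mu(B)}\int_Bw^{-1}\,d\mu\tag{$\star$}
\end{equation*}
valid for every metric ball $B\subset\X$. Granting $(\star)$, the theorem follows quickly. Indeed, if $\x\in B$ then $\frac1{\mu(B)}\int_Bw^{-1}\,d\mu\le M_\mu(w^{-1})(\x)$, where $M_\mu$ denotes the Hardy--Littlewood maximal operator of $(\X,\rho,\mu)$; hence $(\star)$ yields the pointwise bound $M_F^\#(Tf)(\x)\le C\,\|wf\|_{L^\infty(\X;E)}\,M_\mu(w^{-1})(\x)$. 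Multiplying by $w(\x)$ and using the standard fact that $w^{-1}\in A_1(\X)$ is equivalent to $M_\mu(w^{-1})\le C_w\,w^{-1}$ $\mu$-a.e., we get $w(\x)\,M_F^\#(Tf)(\x)\le C\,C_w\,\|wf\|_{L^\infty(\X;E)}$ for $\mu$-a.e.\ $\x\in\X$, which is the assertion.

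To prove $(\star)$, fix $B=B(\x_0,r)$, set $B^\ast=B(\x_0,Nr)$ for a fixed $N\ge 2$, and split $f=f_1+f_2$ with $f_1=f\chi_{B^\ast}$ and $f_2=f\chi_{\X\setminus B^\ast}$. With the choice $c=(Tf_1)_B+Tf_2(\x_0)\in F$ and $\|Tf-(Tf)_B\|_F\le 2\|Tf-c\|_F$, the left-hand side of $(\star)$ is controlled by $\frac{2}{\mu(B)}\int_B\|Tf_1-(Tf_1)_B\|_F\,d\mu$ plus $\frac{2}{\mu(B)}\int_B\|Tf_2-Tf_2(\x_0)\|_F\,d\mu$, and it suffices to bound each of these by $C\,\|wf\|_{L^\infty(\X;E)}\,\frac1{\mu(B)}\int_Bw^{-1}\,d\mu$.

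For the local term I use that $w^{-1}\in A_1(\X)$ forces $w\in A_2(\X)$, because $A_1(\X)\subset A_2(\X)$ and the $A_2$ class is invariant under $v\mapsto v^{-1}$; by Theorem \ref{thm:CZ}$(a)$ the operator $T$ is then bounded from $L^2(\X,w;E)$ into $L^2(\X,w;F)$. Using $\|Tf_1-(Tf_1)_B\|_F\le 2\|Tf_1\|_F$, Cauchy--Schwarz with respect to $d\mu$ after writing $1=w^{1/2}w^{-1/2}$, the $L^2(w)$ bound, the pointwise inequality $\|f(\z)\|_E\le w(\z)^{-1}\|wf\|_{L^\infty(\X;E)}$, and the fact that $w^{-1}\,d\mu$ is doubling (a standard property of $A_1$ weights, so $\int_{B^\ast}w^{-1}\,d\mu\le C\int_Bw^{-1}\,d\mu$), one obtains
\[
\frac1{\mu(B)}\int_B\|Tf_1\|_F\,d\mu\le\frac{C}{\mu(B)}\,\|f_1\|_{L^2(\X,w;E)}\Big(\int_Bw^{-1}\,d\mu\Big)^{1/2}\le C\,\|wf\|_{L^\infty(\X;E)}\,\frac1{\mu(B)}\int_Bw^{-1}\,d\mu,
\]
which settles the local term.

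For the global term, observe that if $\y\in B$ then $\y$ and $\x_0$ both lie in $B^\ast$, hence outside $\supp f_2$, so the representation \eqref{CZ2} applies to $Tf_2(\y)$ and $Tf_2(\x_0)$; moreover $\rho(\x_0,\z)\ge Nr\ge 2r>2\rho(\x_0,\y)$ for every $\z\notin B^\ast$, so the smoothness estimate (II.2) of Definition \ref{CZ} gives
\[
\|Tf_2(\y)-Tf_2(\x_0)\|_F\le C\,r\,\|wf\|_{L^\infty(\X;E)}\int_{\X\setminus B^\ast}\frac{w(\z)^{-1}}{\rho(\x_0,\z)\,\mu\big(B(\x_0,\rho(\x_0,\z))\big)}\,d\mu.
\]
Splitting $\X\setminus B^\ast$ into the dyadic annuli $B(\x_0,2^{k+1}Nr)\setminus B(\x_0,2^kNr)$, $k\ge0$, bounding $\mu(B(\x_0,\rho(\x_0,\z)))$ from below by a multiple of $\mu(B(\x_0,2^kNr))$ via the doubling property, and using the $A_1(\X)$ condition for $w^{-1}$ in the form $\frac1{\mu(B')}\int_{B'}w^{-1}\,d\mu\le C_w\operatorname*{ess\,inf}_{B'}w^{-1}\le C_w\operatorname*{ess\,inf}_{B}w^{-1}\le C_w\,\frac1{\mu(B)}\int_Bw^{-1}\,d\mu$ for every ball $B'\supset B$, the integral sums to a convergent geometric series in $k$ and is bounded by $\frac{C}{r}\,\frac1{\mu(B)}\int_Bw^{-1}\,d\mu$. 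Therefore $\|Tf_2(\y)-Tf_2(\x_0)\|_F\le C\,\|wf\|_{L^\infty(\X;E)}\,\frac1{\mu(B)}\int_Bw^{-1}\,d\mu$ uniformly in $\y\in B$, finishing the global term and hence $(\star)$. The step demanding the most care is this global term, whose whole point is to exploit the $A_1$ character of $w^{-1}$ in order to reproduce the small-ball average $\frac1{\mu(B)}\int_Bw^{-1}$ from integrals over the much larger balls $B(\x_0,2^kNr)$; by contrast the local term is a routine consequence of the weighted $L^2$ boundedness of $T$, once one notices that $w\in A_2(\X)$.
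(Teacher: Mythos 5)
Your proof is correct and follows the same broad outline as the paper's: split $f=f_1+f_2$ at a dilated ball, control the local piece by weighted $L^p$ boundedness of $T$, and control the far piece by the H\"ormander-type smoothness of the kernel together with the $A_1$ condition on $w^{-1}$ applied to the annuli $B(\x_0,2^{k+1}Nr)\setminus B(\x_0,2^kNr)$. The one genuine simplification is in your treatment of the local term: where the paper invokes the reverse H\"older inequality for $A_1(\X)$ weights (Calder\'on's result, their equation \eqref{eq:reverse Holder}) to produce $r>1$ with $w^{-r}\in A_1(\X)\subset A_r(\X)$, hence $w^{r'}\in A_{r'}(\X)$, and then applies $T$ on $L^{r'}(\X,w^{r'})$ with a H\"older split $1=w^{-1}\cdot w$, you bypass reverse H\"older altogether. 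You simply observe that $w^{-1}\in A_1(\X)\subset A_2(\X)$ and that $A_2(\X)$ is closed under $v\mapsto v^{-1}$, so $w\in A_2(\X)$, and then run Cauchy--Schwarz with $1=w^{1/2}w^{-1/2}$ and the $L^2(\X,w)$ boundedness of $T$. The remaining cosmetic difference is your choice of constant $c=(Tf_1)_B+Tf_2(\x_0)$ versus the paper's $c_B=(Tf_2)_B$; both lead to the same annular estimate, you doing a pointwise comparison $Tf_2(\y)-Tf_2(\x_0)$ for $\y\in B$ and the paper averaging over $\z\in B$. Your route is a bit leaner since it avoids the reverse H\"older lemma entirely; the paper's route, having set up $w^{r'}\in A_{r'}(\X)$ for all $r'$ large, is structured to feed into the extrapolation framework of Segovia--Torrea that motivated the original Euclidean result, but for the statement at hand the two arguments are equivalent.
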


\begin{rem}[Parabolic metric space]\label{rem:parabolic metric space}
The parabolic metric space is the space of homogeneous type $(\X,\rho,\mu)=(\mathbb{R}^{n+1},\rho,dtdx)$,
where $\rho$ is the parabolic distance defined by
\begin{equation}\label{eq:parabolic distance}
\rho\big((t,x),(\tau,y)\big)=\max(|t-\tau|^{1/2},|x-y|),\quad\hbox{for}~(t,x),(\tau,y)\in\R^{n+1},
\end{equation}
and $dtdx$ is the Lebesgue measure
on $\R^{n+1}$. The parabolic ball centered at $(t,x)\in\R^{n+1}$
with radius $r>0$ is given by $B((t,x),r)=\{(\tau,y)\in\R^{n+1}:\max(|t-\tau|^{1/2},|x-y|)<r\}$. Then,
$$|B((t,x),r)|=|B((0,0),r)|\sim r^{n+2},$$
so $dtdx$ is a doubling measure.
Notice next that for the case of the parabolic distance \eqref{eq:parabolic distance} the right hand sides in conditions
(II.1) and (II.2) read, for $\x=(t,x)$, $\y=(\tau,y)$ and $\y_0=(\tau_0,y_0)$,
$$\frac{C}{\mu(B(\x,\rho(\x,\y))}\sim\frac{C}{(|t-\tau|^{1/2}+|x-y|)^{n+2}},$$
and, whenever $|t-\tau_0|^{1/2}+|x-y_0|>2(|\tau-\tau_0|^{1/2}+|y-y_0|)$,
$$\frac{\rho(\y,\y_0)}{\rho(\x,\y_0)\mu(B(\y_0,\rho(\x,\y_0))}\sim
\frac{|\tau-\tau_0|^{1/2}+|y-y_0|}{(|t-\tau_0|^{1/2}+|x-y_0|)^{n+3}},$$
respectively. The set of points $\y\in\X$ such that $\rho(\x,\y)>\varepsilon$ that appears in \eqref{truncation} is
\begin{equation}\label{conjuntitos}
\Omega_\varepsilon(t,x):=\{(\tau,y)\in\R^{n+1}:\max(|t-\tau|^{1/2},|x-y|)>\varepsilon\}.
\end{equation}
The class of Muckenhoupt $A_p$ weights $w=w(t,x)$
in this particular case is called the parabolic $A_p$ class
and will be denoted by $PA_p(\R^{n+1})$, $1\leq p<\infty$. The maximal function in the parabolic
metric space is
$$\mathcal{M}f(t,x)=\sup_{r>0}\frac{1}{|B((t,x),r)|}\int_{B((t,x),r)}|f(\tau,y)|\,d\tau\,dy$$
where $B((t,x),r)$ are the parabolic balls as defined above. It is known that $\mathcal{M}$ is bounded in
$L^p(\R^{n+1},w)$, for $1<p<\infty$ and $w\in PA_p(\R^{n+1})$, and from $L^1(\R^{n+1},w)$
into weak-$L^1(\R^{n+1},w)$, for $w\in PA_1(\R^{n+1})$ (see \cite{Calderon}, also \cite{RT}).
Moreover, $w\in PA_1(\R^{n+1})$ if and only
if there exists $C>0$ such that $\mathcal{M}w(t,x)\leq Cw(t,x)$, for a.e. $(t,x)\in\R^{n+1}$.
\end{rem}

\section{Parabolic weighted Sobolev estimates:
proofs of Theorems \ref{thm:parabolic weights} and \ref{thm:parabolic-BMO}.}

\subsection{Proof of Theorem \ref{thm:parabolic weights}}

We begin with the proof of \eqref{eq:uweights}. From the first estimate in
Lemma \ref{nucleo}$(iii)$, and by taking into account the cases
$|y|^2/\tau>1$ and $|y|^2/\tau\leq1$, it can be readily seen that
given any $N>0$, there exist constants $C_N,c_N>0$ such that
$$p(t,\tau,y)\leq C_N\frac{e^{-c_N|\tau|}}{(\tau^{1/2}+|y|)^{n+N}}$$
for any $t,\tau\in\R$, $y\in\R^n$. Consequently, $p(t,\tau,y)$ can be bounded by a nonnegative,
radially decreasing, integrable function in the parabolic metric space $\R^{n+1}$. Therefore, there
exists $C_n>0$ such that
\begin{equation}\label{eq:uleqM}
|u(t,x)|\leq C_n\mathcal{M}f(t,x).
\end{equation}
Now, \eqref{eq:uweights} holds for $p>1$ because $\mathcal{M}$ is bounded
in $L^p(\R^{n+1},w)$, with $w\in PA_p(\R^{n+1})$. For the case $p=1$, if $w\in PA_1(\R^{n+1})$,
and exactly as was done in \eqref{eq:uleqM},
\begin{align*}
\int_{\R^{n+1}}|u(t,x)|w(t,x)\,dt\,dx &\leq \int_{\R^{n+1}}|f(\tau,y)|\bigg[\int_{\R^{n+1}}p(t,t-\tau,x-y)w(t,x)\,dt\,dx
\bigg]\,d\tau\,dy \\
&\leq C\int_{\R^{n+1}}|f(\tau,y)|\mathcal{M}w(\tau,y)\,d\tau\,dy \\
&\leq C\int_{\R^{n+1}}|f(\tau,y)|w(\tau,y)\,d\tau\,dy = C\|f\|_{L^1(\R^{n+1},w)}.
\end{align*}

Next, let us prove that the operators $R_{ij}$, $i,j=1,\ldots,n$, defined by
$$R_{ij}f(t,x)=\lim_{\varepsilon\rightarrow0^+}\int_{\Omega_\varepsilon}
\partial_{y_iy_j}p(t,\tau,y)f(t-\tau,x-y)\,dy\,d\tau-f(t,x)I_{ij}(a)(t)$$
are parabolic Calder\'on--Zygmund operators according to Definition \ref{CZ},
with Banach spaces $E=F=\R$ and $p_0=2$.

Let $f\in L^\infty_c(\R^{n+1})$. By using the convolution property of the Fourier transform on $\R^n$,
Lemma \ref{nucleo} and Plancherel's Theorem on $\R^{n+1}$,
\begin{align*}
\bigg\|\int_\R\int_{\R^n}&\partial_{y_iy_j} p(t,\tau,y) f(t-\tau,x-y)\,dy\,d\tau\bigg\|^2_{L^2(\mathbb{R}^{n+1})} \\
&= \bigg\|\int_{\R}\xi_i\xi_j\widehat{p(t,\tau,\cdot)}(\xi)\widehat{f(t-\tau,\cdot)}(\xi)\,d\tau\bigg\|^2_{L^2(\mathbb{R}^{n+1})} \\ &= \bigg\|\int_{\R}\xi_i\xi_j\chi_{\tau>0}e^{-\tau}\frac{1}{(4\pi)^{n/2}}e^{-\langle A_{t,\tau} \xi,\xi\rangle}
\widehat{f(t-\tau,\cdot)}(\xi)\,d\tau\bigg\|^2_{L^2(\mathbb{R}^{n+1})} \\
&\le C\bigg\|\int_{\R}\chi_{\tau>0}|\xi|^2e^{-\Lambda\tau|\xi|^2}|
\widehat{f(t-\tau,\cdot)}(\xi)|\,d\tau\bigg\|^2_{L^2(\R^{n+1})} \\
&\leq C\bigg\|\bigg(\int_{\R}\chi_{\tau>0}|\xi|^2e^{-\Lambda\tau|\xi|^2}|
\widehat{f(t-\tau,\cdot)}(\xi)|^2\,d\tau\bigg)^{1/2}\\
&\qquad\quad\times\bigg(\int_0^\infty|\xi|^2e^{-\Lambda\tau|\xi|^2}\,d\tau\bigg)^{1/2}
\bigg\|^2_{L^2(\R^{n+1})} \\
&= C\int_{\R^{n+1}}\int_{\R}\chi_{\tau>0}|\xi|^2  e^{-\Lambda\tau |\xi|^2}|\widehat{f(t-\tau,\cdot)}(\xi)|^2\,d\tau\,d\xi\,dt \\
&= C\int_{\R^{n+1}}\int_{\R} \chi_{t>s} |\xi|^2  e^{-\Lambda(t-s)|\xi|^2}  |\widehat{f(s,\cdot)}(\xi)|^2\,dt\,d\xi\,ds \\
&= C\int_{\R^{n+1}}\bigg(\int_0^\infty|\xi|^2  e^{-\Lambda r|\xi|^2}\,dr\bigg)|\widehat{f(s,\cdot)}(\xi)|^2\,d\xi\,ds \\
&= C_{n,\Lambda}\int_{\mathbb{R}^{n+1}}|\widehat{f(s,\cdot)}(\xi)|^2\,d\xi\,ds
=C_{n,\Lambda}\| f \|^2_{L^2(\mathbb{R}^{n+1})}.
\end{align*}
This computation shows that the integral operator in the right hand side of formula \eqref{equation2 x}
extends as a bounded operator from $L^{p_0}(\R^{n+1})$ into itself, for $p_0=2$. On the other hand, by 
making the change of variables $y=a(t)^{1/2}x$ and using the ellipticity of $a(t)$,
\begin{align*}
|I_{ij}(a)(t)| &\leq \int_{\{x:|2a(t)^{1/2}x|>1\}}\frac{e^{-|x|^2}}{\pi^{n/2}}
\frac{|a(t)^{-1/2}x||a(t)^{1/2}x|}{|a(t)^{1/2}x|^2}\,dx \\
&=  \int_{\{x:|2y|>1\}}\frac{e^{-|a^{-1/2}(t)y|^2}}{\pi^{n/2}}
\frac{|a(t)^{-1}y|}{|y|}|\det(a^{-1/2}(t))|\,dx \\
&\leq C_{n,\Lambda}\int_{\R^n}e^{-\Lambda|y|^2}\,dy=C_{n,\Lambda}.
\end{align*}
Hence $I_{ij}(a)(t)\in L^\infty(\R^{n+1})$. Therefore, the operators $R_{ij}$ extend as bounded operators
on $L^2(\R^{n+1})$.

Clearly, for any $f\in L^\infty_c(\R^{n+1})$ we have the representation
$$R_{ij}f(t,x)=\iint_{\R^{n+1}}K_{ij}((t,x),(\tau,y))f(\tau,y)\,d\tau\,dy,\quad\hbox{for}~(t,x)\notin\supp(f),$$
where the kernel $K_{ij}$ is given by
\begin{equation}\label{eq:kernelRiesztransformsx}
K_{ij}((t,x),(\tau,y))\equiv K_{ij}(t,\tau,x-y)=\partial_{y_iy_j}p(t,t-\tau,x-y).
\end{equation}

The size and smoothness estimates for the kernel in \eqref{eq:kernelRiesztransformsx}
follow from Lemma \ref{nucleo}. Indeed,
\begin{align}\label{eq:tamanodeKij}
|K_{ij}&((t,x),(\tau,y))| \nonumber\\
&\leq C\chi_{t>\tau}e^{-(t-\tau)}\frac{e^{-|x-y|^2/(c(t-\tau))}}{((t-\tau)^{1/2})^{n+2}} \\
&\leq C\chi_{t>\tau}\frac{((t-\tau)^{1/2}+|x-y|)^{n+2}e^{-|x-y|^2/(c(t-\tau))}}{((t-\tau)^{1/2})^{n+2}}
\cdot\frac{1}{((t-\tau)^{1/2}+|x-y|)^{n+2}} \nonumber\\
&\leq C\chi_{t>\tau}\bigg(1+\frac{|x-y|}{(t-\tau)^{1/2}}\bigg)^{n+2}e^{-|x-y|^2/(c(t-\tau))}
\cdot\frac{1}{((t-\tau)^{1/2}+|x-y|)^{n+2}} \nonumber\\
&\leq \frac{C}{|t-\tau|^{1/2}+|x-y|)^{n+2}}.\nonumber
\end{align}
In a similar way, by Lemma \ref{nucleo},
\begin{align*}
|\partial_{x_k}K_{ij}((t,x),(\tau,y))|+|\partial_{y_k}K_{ij}((t,x),(\tau,y))|
&\leq C\chi_{t>\tau}e^{-c(t-\tau)}\frac{e^{-|x-y|^2/(c(t-\tau))}}{((t-\tau)^{1/2})^{n+3}} \\
&\leq \frac{C}{(|t-\tau|^{1/2}+|x-y|)^{n+3}},
\end{align*}
and
\begin{align}\label{eq:tamanioderivadasent}
|\partial_tK_{ij}((t,x),(\tau,y))|+|\partial_\tau K_{ij}((t,x),(\tau,y))| &\leq C\chi_{t>\tau}e^{-c(t-\tau)}
\frac{e^{-|x-y|^2/(c(t-\tau))}}{((t-\tau)^{1/2})^{n+4}} \\
&\leq \frac{C}{(|t-\tau|^{1/2}+|x-y|)^{n+4}}.\nonumber
\end{align}
Let $|t-\tau_0|^{1/2}+|x-y_0|\geq 2(|\tau-\tau_0|^{1/2}+|y-y_0|)$.
If $(s,\theta)$ is an intermediate point between $(\tau,y)$
and $(\tau_0,y_0)$ then $|t-s|^{1/2}+|x-\theta|\geq C(|t-\tau_0|^{1/2}+|x-y_0|)$.
By using the Mean Value Theorem,
\begin{align*}
|K_{ij}&((t,x),(\tau,y))-K_{ij}((t,x),(\tau_0,y_0))| \\
&=|\nabla_{\tau,y}K_{ij}((t,x),(s,\theta))\cdot(\tau-\tau_0,y-y_0)| \\
&\leq |\nabla_\tau K_{ij}((t,x),(s,\theta))||\tau-\tau_0|+|\nabla_yK_{ij}((x,t),(s,\theta))||y-y_0| \\
&\leq C\bigg(\frac{|\tau-\tau_0|}{(|t-s|^{1/2}+|x-\theta|)^{n+4}}+\frac{|y-y_0|}{(|t-s|^{1/2}+|x-\theta|)^{n+3}}\bigg) \\
&\leq C\bigg(\frac{|\tau-\tau_0|^{1/2}(|t-\tau_0|^{1/2}+|x-y_0|)}{(|t-\tau_0|^{1/2}+|x-y_0|)^{n+4}}
+\frac{|y-y_0|}{(|t-\tau_0|^{1/2}+|x-y_0|)^{n+3}}\bigg) \\
&=C\frac{|\tau-\tau_0|^{1/2}+|y-y_0|}{(|t-\tau_0|^{1/2}+|x-y_0|)^{n+3}}.
\end{align*} 
In a completely analogous way we can prove, under the condition that
$|t-\tau_0|^{1/2}+|x-y_0|\geq 2(|\tau-\tau_0|^{1/2}+|y-y_0|)$, the estimate
$$|K_{ij}((\tau,y),(t,x))-K_{ij}((\tau_0,y_0),(t,x))|\leq C\frac{|\tau-\tau_0|^{1/2}+|y-y_0|}{(|t-\tau_0|^{1/2}+|x-y_0|)^{n+3}}.$$
In other words the kernel $K_{ij}$ satisfies the size and smoothness conditions (I.2) and (II.2) of Definition \ref{CZ}.
Therefore, the conclusions on $\partial_{ij}u$ of the statement follow from Theorem \ref{thm:CZ}.

For the time derivative $f\longmapsto\partial_tu$, we can proceed in a parallel way
so that it shares the same boundedness properties as $R_{ij}f$.
Details are left to the interested reader.

The proof of Theorem \ref{thm:parabolic weights} is completed.\qed

\subsection{Proof of Theorem \ref{thm:parabolic-BMO}.}

In view of the proof of Theorem \ref{thm:parabolic weights},
Theorem \ref{thm:parabolic-BMO} is a direct
corollary of Theorem \ref{thm:Segovia-Torrea}.\qed

\section{Weighted mixed-norm Sobolev estimates: proofs of Theorems
\ref{thm:LqSobolev} and \ref{thm:mixed-BMO}}

\subsection{Proof of Theorem \ref{thm:LqSobolev}.}

Let us begin by proving \eqref{eq:umixedweights}. Let $M_{\R^{n}}$ denote the classical Hardy--Littlewood maximal
operator on $\R^n$, $n\geq1$. Recall that the classical heat semigroup
$$e^{r\Delta}f(x)=\frac{1}{(4\pi r)^{n/2}}\int_{\R^n}e^{-|x-y|^2/(4r)}f(y)\,dy$$
which is defined for $r>0$ and $x\in\R^n$, satisfies the pointwise inequality
$$\sup_{r>0}|e^{r\Delta}f(x)|\leq CM_{\R^n}f(x),$$
for some constant $C>0$, for every $x\in\R^n$.
Then, by the first estimate in Lemma \ref{nucleo}$(iii)$,
$$|u(t,x)|\leq C\int_{-\infty}^te^{-(t-\tau)}M_{\R^n}[f(\tau,\cdot)](x)\,d\tau.$$
Thus, if $p>1$,
\begin{align*}
\|u(t,\cdot)\|_{L^p(\R^n,\omega)} &\leq C\int_{-\infty}^te^{-(t-\tau)}\|M_{\R^n}[f(\tau,\cdot)]\|_{L^p(\R^n,\omega)}\,d\tau \\
&\leq C\int_{-\infty}^te^{-(t-\tau)}\|f(\tau,\cdot)\|_{L^p(\R^n,\omega)}\,d\tau \\
&\leq CM_{\R}[\|f(\cdot,\cdot)\|_{L^p(\R^n,\omega)}](t).
\end{align*}
In particular, if $q>1$,
$$\|u\|_{L^q(\R,\nu;L^p(\R^n,\omega))}\leq C\|M_{\R}[\|f(\cdot,\cdot)\|_{L^p(\R^n,\omega)}]\|_{L^q(\R,\nu)}
\leq C\|f\|_{L^q(\R,\nu;L^p(\R^n,\omega))},$$
while if $q=1$,
\begin{align*}
\int_\R\|u(t,\cdot)\|_{L^p(\R^n,\omega)}\nu(t)\,dt &\leq
C\int_{\R}\|f(\tau,\cdot)\|_{L^p(\R^n,\omega)}\int_{\R}e^{-|t-\tau|}\nu(t)\,dt\,d\tau \\
&\leq C\int_\R\|f(\tau,\cdot)\|_{L^p(\R^n,\omega)}M_{\R}\nu(\tau)\,d\tau \\
&\leq C\int_\R\|f(\tau,\cdot)\|_{L^p(\R^n,\omega)}\nu(\tau)\,d\tau=C\|f\|_{L^1(\R,\nu;L^p(\R^n,\omega))}.
\end{align*}
When $p=q=1$ we can estimate
\begin{align*}
\int_{\R^{n+1}}&|u(t,x)|\nu(t)\omega(x)\,dt\,dx \\
&\leq C\int_{\R^{n+1}}|f(\tau,y)|\int_\R e^{-|t-\tau|}\bigg[\int_{\R^n}\frac{e^{-|x-y|^2/(c|t-\tau|)}}{|t-\tau|^{n/2}}\omega(x)\,dx\bigg]
\nu(t)\,dt\,d\tau\,dy \\
&\leq C\int_{\R^{n+1}}|f(\tau,y)|M_{\R^n}\omega(y)\bigg[\int_\R e^{-|t-\tau|}\nu(t)\,dt\bigg]\,d\tau\,dy \\
&\leq C\int_{\R^{n+1}}|f(\tau,y)|M_{\R^n}\omega(y)M_{\R}\nu(\tau)\,d\tau\,dy \\
&\leq C\int_{\R^{n+1}}|f(\tau,y)|\omega(y)\nu(\tau)\,d\tau\,dy =C\|f\|_{L^1(\R,\nu;L^1(\R^n,\omega))}.
\end{align*}
This completes the proof of \eqref{eq:umixedweights}. 

Let $1<p<\infty$. Fix $\omega=\omega(x)\in A_p(\R^n)$ and let $E=F=L^p(\R^n,\omega)$.
Let $\nu=\nu(t)\in A_p(\R)$. It is easy to check that the tensor product weight
$w(t,x)=\nu(t)\omega(x)$ belongs to $PA_p(\R^{n+1})$.
Consider, as in the proof of Theorem \ref{thm:parabolic weights}, the singular integral operator
$$R_{ij}f(t,x)=\lim_{\varepsilon\rightarrow0^+}\iint_{\Omega_\varepsilon(t,x)}
K_{ij}(t,\tau,x-y)f(\tau,y)\,dy\,d\tau-f(t,x)I_{ij}(a)(t),$$
where $K_{ij}(t,\tau,x-y)$ is given as in \eqref{eq:kernelRiesztransformsx}.
Since $R_{ij}$ is bounded on $L^p(\mathbb{R}^{n+1},w)$, for any $w\in PA_p(\R^{n+1})$, we obviously have
that $R_{ij}$ is bounded on $L^p(\R;E)$.
For every $t,\tau\in\R$ such that $t\neq\tau$, define the vector-valued kernel
$\mathrm{K}_{ij}(t,\tau)\in\mathcal{L}(E,E)$ that acts on $\varphi\in E$ with compact support as
$$\big(\mathrm{K}_{ij}(t,\tau)\cdot\varphi\big)(x)=\int_{\R^n}K_{ij}(t,\tau,x-y)\varphi(y)\,dy.$$
Given a function $f(t,x)\in L^p(\R;E)$
we consider the vector-valued function
$t\longmapsto\mathrm{f}(t)\in E$,
given by $\mathrm{f}(t)=f(t,x)$, for a.e.~$t\in\R$.
We then define the vector-valued operator $\mathrm{R}_{ij}:L^p(\R;E)\to L^p(\R;E)$ as
$$\mathrm{R}_{ij}\mathrm{f}(t)=\int_{\R}\mathrm{K}_{ij}(t,\tau)\cdot\mathrm{f}(\tau)\,d\tau,
\quad\hbox{for}~t\notin\supp(\mathrm{f}).$$

The classical heat semigroup $e^{r\Delta}$ is bounded on $L^p(\R^n,\omega)$,
with constant independent of $r>0$. Whence, for any $\varphi\in E$, by \eqref{eq:tamanodeKij},
\begin{align*}
\|\mathrm{K}_{ij}(t,\tau)\cdot\varphi\|_E &= \bigg\|\int_{\R^n}
K_{ij}(t,\tau,y)\varphi(x-y)\,dy\bigg\|_{L^p(\R^n,\omega)} \\
&\leq C\chi_{t>\tau}\bigg\|\int_{\R^n}\frac{e^{-|y|^2/(c(t-\tau))}}{((t-\tau)^{1/2})^{n+2}}\varphi(x-y)\,dy\bigg\|_{L^p(\R^n,\omega)} \\
&= \frac{C\chi_{t>\tau}}{t-\tau}\|e^{c(t-\tau)\Delta}\varphi\|_{L^p(\R^n,\omega)} \\
&\leq \frac{C}{|t-\tau|}\|\varphi\|_{L^p(\R^n,\omega)}.
\end{align*}
Similarly, by using \eqref{eq:tamanioderivadasent},
\begin{align*}
\|\partial_t\mathrm{K}_{ij}(t,\tau)\cdot\varphi\|_E+\|\partial_\tau\mathrm{K}_{ij}(t,\tau)\cdot\varphi\|_E
&\leq C\chi_{t>\tau}\bigg\|\int_{\R^n}\frac{e^{-|y|^2/(c(t-\tau))}}{((t-\tau)^{1/2})^{n+4}}\varphi(x-y)\,dy\bigg\|_{L^p(\R^n,\omega)} \\
&= \frac{C\chi_{t>\tau}}{(t-\tau)^2}\|e^{c(t-\tau)\Delta}\varphi\|_{L^p(\R^n,\omega)} \\
&\leq \frac{C}{|t-\tau|^2}\|\varphi\|_{L^p(\R^n,\omega)}.
\end{align*}
These estimates show that $\mathrm{K}_{ij}(t,\tau)\in\mathcal{L}(E,E)$ is a vector-valued Calder\'on--Zygmund kernel
on the space of homogeneous type $(\R,|\cdot|,dt)$. Therefore Theorem \ref{thm:CZ} applies.
The same method also works for $f\longmapsto\partial_tu$. The proof of Theorem \ref{thm:LqSobolev} is complete.\qed

\subsection{Proof of Theorem \ref{thm:mixed-BMO}.}

In the proof of Theorem \ref{thm:LqSobolev} we showed that $R_{ij}$ are vector-valued
Calder\'on--Zygmund operators in the weighted mixed-norm spaces, and similarly for $f\longmapsto\partial_tu$.
The conclusion follows from Theorem \ref{thm:Segovia-Torrea} with $\X=\R$,
$E=F=L^p(\R^n,\omega)$.\qed

\section{Weighted mixed-norm Holder estimates: proof of Theorem \ref{thm:Holder}}

We denote by $C^{\alpha}(\R^n)$, $0<\alpha<1$, the space of continuous functions $\phi:\R^n\to\R$ such that
$$[\phi]_{C^\alpha(\R^n)}=\sup_{x\neq y}\frac{|\phi(x)-\phi(y)|}{|x-y|^\alpha}<\infty.$$
We make the identification $\phi_1=\phi_2$ in $C^{\alpha}(\R^n)$ if $\phi_1-\phi_2$ is constant. Within
this quotient space, $[\phi]_{C^\alpha(\R^n)}$ becomes a norm and $C^{\alpha}(\R^n)$
is a Banach space. A strongly measurable function $f=f(t,x)$ belongs to $L^q(\R,\nu;C^{\alpha}(\R^n))$, 
with $\nu\in A_q(\R)$ if $1\leq q<\infty$, and $\nu\equiv1$ if $q=\infty$, if
$$\|f\|_{L^q(\R,\nu;C^{\alpha}(\R^n))}=\|[f(t,\cdot)]_{C^\alpha(\R^n)}\|_{L^q(\R,\nu)}<\infty.$$
As before, we consider the singular integral operator
\begin{align*}
R_{ij}f(t,x)&=\lim_{\varepsilon\rightarrow0^+}\iint_{\Omega_\varepsilon(t,x)}
K_{ij}(t,\tau,x-y)f(\tau,y)\,dy\,d\tau-f(t,x)I_{ij}(a)(t)\\
&=T_{ij}f(t,x)-f(t,x)I_{ij}(a)(t),
\end{align*}
where $K_{ij}(t,\tau,x-y)$ is given as in \eqref{eq:kernelRiesztransformsx}.

\begin{lem}\label{Lemma6.1}
Let $f=f(t,x)\in L^\infty(\R;C^{\alpha}(\R^n))$ for some $0<\alpha<1$. Then
$$\|R_{ij}f\|_{L^\infty(\R;C^{\alpha}(\R^n))}\leq C\|f\|_{L^\infty(\R;C^{\alpha}(\R^n))}.$$
The singular integral operator $f\longmapsto\partial_tu$ satisfies the same estimate.
\end{lem}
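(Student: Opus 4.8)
The plan is to estimate, for each fixed pair $t\neq s$ in $\R$, the quantity $[R_{ij}f(t,\cdot)]_{C^\alpha(\R^n)}$ directly from the kernel representation, exploiting that $R_{ij}$ acts only in the spatial variables for frozen time. Fix two distinct points $x,x'\in\R^n$ and write $h=|x-x'|$. First I would split the $\tau$-integral (equivalently, the $(t-\tau)$-integral over $(0,\infty)$) at the scale $\tau\sim h^2$: a \emph{near} part $0<t-\tau<h^2$ and a \emph{far} part $t-\tau\ge h^2$. For the term $f(t,x)I_{ij}(a)(t)$ one just uses that $I_{ij}(a)(t)$ is a bounded function of $t$ (as shown in the proof of Theorem \ref{thm:parabolic weights}), so it contributes at most $C\|I_{ij}(a)\|_\infty[f(t,\cdot)]_{C^\alpha(\R^n)}$ to the H\"older seminorm, which is controlled by $\|f\|_{L^\infty(\R;C^{0,\alpha}(\R^n))}$.

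For the integral operator $T_{ij}f(t,x)=\lim_\varepsilon\iint_{\Omega_\varepsilon}\partial_{y_iy_j}p(t,t-\tau,y)f(\tau,x-y)\,dy\,d\tau$, the standard Calder\'on--Zygmund-in-H\"older-spaces mechanism applies. Because $\int_{\R^n}\partial_{y_iy_j}p(t,\sigma,y)\,dy=0$ for each $\sigma>0$ (the Fourier transform in Lemma \ref{nucleo}$(iv)$ vanishes at $\xi=0$ to second order, so integrating the second $y$-derivative gives zero), I can subtract $f(\tau,x)$ from $f(\tau,x-y)$ inside the kernel integral without changing $T_{ij}f$, turning the estimate into one involving $|f(\tau,x-y)-f(\tau,x)|\le[f(\tau,\cdot)]_{C^\alpha}|y|^\alpha\le\|f\|_{L^\infty(\R;C^{0,\alpha})}|y|^\alpha$. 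Then $[T_{ij}f(t,\cdot)]_{C^\alpha}$ is bounded by $\|f\|_{L^\infty(\R;C^{0,\alpha})}$ times
\begin{equation*}
h^{-\alpha}\int_0^\infty\!\!\int_{\R^n}\big|\partial_{y_iy_j}p(t,t-\tau,x-y)-\partial_{y_iy_j}p(t,t-\tau,x'-y)\big|\,|y-\text{(center)}|^\alpha\,dy\,d\tau ,
\end{equation*}
which after the near/far split and the Gaussian bounds of Lemma \ref{nucleo}$(iii)$--$(iv)$ — size $\tau^{-(n/2+1)}$ for $\partial_{ij}p$, one extra derivative gaining $\tau^{-1/2}$ for the smoothness increment on the far part — reduces to elementary scaling integrals of the form $\int_0^\infty \tau^{-1+\alpha/2}\cdot(\text{Gaussian in }h/\tau^{1/2})\,d\tau$ together with the exponential factor $e^{-c\tau}$ that guarantees convergence at $\tau=\infty$. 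These integrals are finite, uniformly in $h>0$ and $t\in\R$, giving the desired bound; the argument for $f\mapsto\partial_t u$ is identical using the kernel $(\partial_t+\partial_\tau)p$ and its bounds, which by Lemma \ref{nucleo}$(iii)$ have the same homogeneity (and even a better time-decay factor $e^{-\tau/2}$).

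The main obstacle I anticipate is the bookkeeping on the \emph{near} part $0<t-\tau<h^2$, where one cannot use the smoothness (Mean Value) estimate on the difference of kernels at the two base points $x,x'$ — the increment $|x-x'|=h$ is not small compared to the scale $\tau^{1/2}\lesssim h$. There one must estimate the two terms separately, and the cancellation $\int\partial_{y_iy_j}p\,dy=0$ is essential to extract the extra $|y|^\alpha$ gain that makes $\int_0^{h^2}\tau^{-1+\alpha/2}\,d\tau\sim h^\alpha$ finite and exactly absorb the prefactor $h^{-\alpha}$. A secondary technical point is justifying the principal-value limit and the subtraction of $f(\tau,x)$ rigorously — but this is routine given the kernel decay and the already-established a.e. existence of the limit from Theorem \ref{thm:CZ}, so I would only remark on it. Everything else is, as the statement of Lemma \ref{Lemma6.1} suggests, the $L^{q_0}=L^\infty$ endpoint input that feeds the vector-valued Calder\'on--Zygmund machinery of Section \ref{Section:eth}.
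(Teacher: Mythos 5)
Your proposal is correct and follows essentially the same route as the paper: split the $\tau$-integral at the scale $|x_1-x_2|^2$, handle the near part by estimating the two base points separately using the cancellation $\int_{\R^n}\partial_{y_iy_j}p(t,\tau,y)\,dy=0$ together with the $|y|^\alpha$ H\"older gain, handle the far part by a mean-value (kernel smoothness) estimate gaining a factor $|x_1-x_2|/\tau^{1/2}$, and absorb the term $f(t,x)I_{ij}(a)(t)$ via the boundedness of $I_{ij}(a)$. The only item the paper adds beyond your sketch is a citation to Krylov for the strong measurability of $R_{ij}f$ as a $C^{0,\alpha}(\R^n)$-valued function, a minor point worth recording.
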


\begin{proof}
The fact that $R_{ij}f$ is measurable as a $C^{\alpha}(\R^n)$-valued
function was proved by Krylov in \cite[p.~819]{Krylov}.
We only need to consider $T_{ij}f(t,x)$, because $I_{ij}(a)(t)\in L^\infty(\R)$.
Let $x_1,x_2\in\R^n$ and set $\rho=|x_1-x_2|^2$. Since
$$\int_{\R^n}K_{ij}(t,\tau,y)\,dy=0,$$
for every $t,\tau\in\R$, $i,j=1,\ldots,n$, we can write
\begin{align*}
&T_{ij}f(t,x_1)-T_{ij}f(t,x_2) \\
&= \int_0^\rho\int_{\R^n}\bar{K}_{ij}(t,\tau,y)\big(f(t-\tau,x_1-y)-f(t-\tau,x_1)\big)\,dy\,d\tau \\
&\quad+\int_0^\rho\int_{\R^n}\bar{K}_{ij}(t,\tau,y)\big(f(t-\tau,x_2-y)-f(t-\tau,x_2)\big)\,dy\,d\tau \\
&\quad+\int_\rho^\infty\int_{\R^n}\big(\bar{K}_{ij}(t,\tau,x_1-y)
-\bar{K}_{ij}(t,\tau,x_2-y)\big)\big(f(t-\tau,y)-f(t-\tau,x_1)\big)\,dy\,d\tau \\
&= I+II+III,
\end{align*}
where $\bar{K}_{ij}(t,\tau,y) = \partial_{y_iy_j}p(t,\tau,y)$ (compare with \eqref{eq:kernelRiesztransformsx}).
By applying \eqref{eq:tamanodeKij} we can estimate
\begin{align*}
|I|+|II| &\leq C\|[f(t,\cdot)]_{C^\alpha(\R^n)}\|_{L^\infty(\R)}\int_0^\rho e^{-\tau}
\int_{\R^n}\frac{e^{-|y|^2/(c\tau)}}{\tau^{n/2+1}}\frac{|y|^\alpha}{\tau^{\alpha/2}}
\tau^{\alpha/2}\,dy\,d\tau \\
&\leq C\|f\|_{L^\infty(\R;C^{\alpha}(\R^n))}\int_0^\rho\tau^{\alpha/2-1}\,d\tau \\
&= C\|f\|_{L^\infty(\R;C^{\alpha}(\R^n))}|x_1-x_2|^\alpha.
\end{align*}
For $III$, let $\displaystyle E(x) = \chi_{\tau>0}\, e^{-\tau}\, \frac{\exp\big(-\tfrac{1}{4}\langle B_{t,\tau}x,x\rangle\big)}{(4\pi)^{n/2} (\det B_{t,\tau})^{-1/2}}$ and $F(x)= [-\tfrac{1}{2}B_{ij} +\tfrac{1}{4}(Bx)_i(Bx)_j\Big].$  By using Lemma \ref{derivativex} we have
\begin{align*}
|\bar{K}_{ij}(t,\tau,&x_1-y)-\bar{K}_{ij}(t,\tau,x_2-y)| \\
&=  E(x_1-y) F(x_1-y) - E(x_2-y) F(x_2-y) \\
&= (E(x_1-y)  - E(x_2-y)) F(x_1-y) +E(x_2-y) ( F(x_1-y)- F(x_2-y)) \\
&= \mathcal{A}_1+ \mathcal{A}_2.
\end{align*}
Consider the function $\displaystyle s\longmapsto\chi_{\tau>0}\,e^{-\tau}\, \frac{\exp\big(-\tfrac{1}{4}s^2\big)}{(4\pi)^{n/2} (\det B_{t,\tau})^{-1/2}}$. By the mean value theorem  there exists  an intermediate real number $\eta $  between $|B_{t,\tau}^{1/2}(x_1-y)|$ and $|B_{t,\tau}^{1/2}(x_2-y)|$  (we  assume $|B_{t,\tau}^{1/2}(x_1-y)| \le \eta \le |B_{t,\tau}^{1/2}(x_2-y)|$), such that  
\begin{align*}
|\mathcal{A}_1| &\le \bigg|\frac{d}{ds}\bigg[\chi_{\tau>0}e^{-\tau}\frac{\exp\big(-\tfrac{1}{4}s^2\big)}{(4\pi)^{n/2} (\det B_{t,\tau})^{-1/2}}\bigg]\Big|_{s=\eta}\big||B_{t,\tau}^{1/2}(x_1-y)|-|B_{t,\tau}^{1/2}(x_2-y)|\big||F(x_1-y) |
\\ &\le  C   \chi_{\tau>0}e^{-\tau}\eta \frac{\exp\big(-\tfrac{1}{4}\eta^2\big)}{(\det B_{t,\tau})^{-1/2}\tau^{1/2}}
|x_1-x_2||F(x_1-y) | \\
&\le  C    \chi_{\tau>0}e^{-\tau} \frac{|x_2-y|}{\tau^{1/2}}\cdot \frac{\exp\big(-|x_1-y|^2/(c\tau)\big)}{\tau^{n/2}}\cdot\frac{ 
|x_1-x_2|}{\tau^{1/2}}\bigg[\frac{1}{\tau}+  \frac{|x_1-y|^2}{\tau^2}\bigg]\\ &\le
C    \chi_{\tau>0}e^{-\tau} \frac{(|x_2-x_1|+ |x_1-y|)}{\tau^{1/2}}\cdot \frac{\exp\big(-|x_1-y|^2/(c\tau)\big)}{\tau^{n/2}} 
\cdot\frac{|x_1-x_2|}{\tau^{1/2}} \cdot\frac{1}{\tau} \\ &=
C    \chi_{\tau>0}e^{-\tau} \bigg[\frac{|x_2-x_1|^2}{\tau^2} + \frac{|x_1-x_2||x_1-y| }{\tau^2}\bigg]\cdot
\frac{\exp\big(-|x_1-y|^2/(c\tau)\big)}{\tau^{n/2}}.
\end{align*}
On the other hand, it is routine to check that for $|B_{t,\tau}^{1/2}(x_1-y)|  \le |B_{t,\tau}^{1/2}(x_2-y)|$ one has
 \begin{align*}
 |\mathcal{A}_2| &\le C \chi_{\tau>0}e^{-\tau} \frac{|x_1-x_2|}{\tau^2} (|x_1-y|+|x_2-y|)
 \frac{\exp\big(-|x_1-y|^2/(c\tau)\big)}{\tau^{n/2}} \\
 &\le C \chi_{\tau>0}e^{-\tau} \bigg[\frac{|x_2-x_1|^2}{\tau^2} + \frac{|x_1-x_2||x_1-y| }{\tau^2}\bigg]
 \cdot \frac{\exp\big(-|x_1-y|^2/(c\tau)\big)}{\tau^{n/2}}.
 \end{align*}
Now we come back to the estimate for $III$. We have  
\begin{align*}
|III| &\leq C\|f\|_{L^\infty(\R;C^{\alpha}(\R^n))}\int_\rho^\infty e^{-\tau}\int_{\R^n}
\bigg[\frac{|x_2-x_1|^2}{\tau^2} + \frac{|x_1-x_2||x_1-y| }{\tau^2}\bigg]\\
&\qquad\qquad\qquad\qquad\qquad\qquad\qquad\qquad \times\frac{\exp\big(-|x_1-y|^2/(c\tau)\big)}{\tau^{n/2}}
|x_1-y|^\alpha\,dy\,d\tau \\
&\le  C\|f\|_{L^\infty(\R;C^{\alpha}(\R^n))}\int_\rho^\infty \int_{\R^n}
  \bigg[\frac{|x_1-x_2|^2}{\tau^3}+ \frac{|x_1-x_2|}{\tau^2}\bigg]
  \tau^{\alpha/2}\frac{\exp\big(-|x_1-y|^2/(c\tau)\big)}{\tau^{n/2}}
\,dy\,d\tau  \\ & 
= C\|f\|_{L^\infty(\R;C^{\alpha}(\R^n))}\int_\rho^\infty 
\bigg[\frac{|x_1-x_2|^2}{\tau^3}+ \frac{|x_1-x_2|}{\tau^2}\bigg]\tau^{\alpha/2}\,d\tau  \\
&= C\|f\|_{L^\infty(\R;C^{\alpha}(\R^n))} |x_1-x_2|^\alpha.
\end{align*}
Hence
$$|T_{ij}f(t,x_1)-T_{ij}f(t,x_2)|\leq C\|f\|_{L^\infty(\R;C^{\alpha}(\R^n))}|x_1-x_2|^\alpha,$$
for every $x_1,x_2\in\R^n$, uniformly in $t\in\R$.
\end{proof}

\begin{proof}[Proof of Theorem \ref{thm:Holder}]
Let $E=F=C^{\alpha}(\R^n)$ and, for every $t,\tau\in\R$ such that $t\neq\tau$, define the vector-valued kernel
$\mathrm{K}_{ij}(t,\tau)\in\mathcal{L}(E,E)$ that acts on $\varphi\in E$ with compact support as
$$\big(\mathrm{K}_{ij}(t,\tau)\cdot\varphi\big)(x)=\int_{\R^n}K_{ij}(t,\tau,x-y)\varphi(y)\,dy.$$
Given a function $f(t,x)\in L^\infty(\R;E)$
we consider the vector-valued function
$t\longmapsto\mathrm{f}(t)\in E$,
given by $\mathrm{f}(t)=f(t,x)$, for a.e.~$t\in\R$.
We then define the vector-valued operator $\mathrm{R}_{ij}:L^\infty(\R;E)\to L^\infty(\R;E)$ as
$$\mathrm{R}_{ij}\mathrm{f}(t)=\int_{\R}\mathrm{K}_{ij}(t,\tau)\cdot\mathrm{f}(\tau)\,d\tau,
\quad\hbox{for}~t\notin\supp(\mathrm{f}).$$

We first check that $\mathrm{K}_{ij}(t,\tau)\in\mathcal{L}(E,E)$ for every $t\neq\tau$
with the corresponding size estimate.
For any $\varphi\in E$, by \eqref{eq:tamanodeKij},
\begin{align*}
|\mathrm{K}_{ij}(t,\tau)\cdot\varphi(x_1)-\mathrm{K}_{ij}(t,\tau)\cdot\varphi(x_2)| &\leq \int_{\R^n}
|K_{ij}(t,\tau,y)||\varphi(x_1-y)-\varphi(x_2-y)|\,dy \\
&\leq C\chi_{t>\tau}[\varphi]_{C^\alpha(\R^n)}|x_1-x_2|^\alpha
\int_{\R^n}\frac{e^{-|y|^2/(c(t-\tau))}}{((t-\tau)^{1/2})^{n+2}})\,dy \\
&\leq \frac{C}{|t-\tau|}[\varphi]_{C^\alpha(\R^n)}|x_1-x_2|^\alpha,
\end{align*}
so that
$$\|\mathrm{K}_{ij}(t,\tau)\|\leq\frac{C}{|t-\tau|}.$$
Similarly, by using \eqref{eq:tamanioderivadasent}, 
\begin{align*}
[\partial_t\mathrm{K}_{ij}(t,\tau)\cdot\varphi]_{C^\alpha(\R^n)}
+[\partial_\tau\mathrm{K}_{ij}(t,\tau)\cdot\varphi]_{C^\alpha(\R^n)}
&\leq C\chi_{t>\tau}[\varphi]_{C^\alpha(\R^n)}\int_{\R^n}\frac{e^{-|y|^2/(c(t-\tau))}}{((t-\tau)^{1/2})^{n+4}}\,dy \\
&\leq \frac{C}{|t-\tau|^2}[\varphi]_{C^\alpha(\R^n)}.
\end{align*}
These estimates show that $\mathrm{K}_{ij}(t,\tau)\in\mathcal{L}(E,E)$ is a vector-valued Calder\'on--Zygmund kernel
on the space of homogeneous type $(\R,|\cdot|,dt)$. 
Therefore, Theorem \ref{thm:CZ} applies.
The case $f\longmapsto\partial_tu$ is similar and we leave the details to the reader.
\end{proof}

\section{The Cauchy problem: proof of Theorem \ref{thm:Cauchy}}\label{section:Cauchy}

\begin{proof}[Proof of Theorem \ref{thm:Cauchy}]
First, to prove that $v$ satisfies the integrability properties in mixed normed spaces with weights,
we can perform computations completely analogous to those done for the solution $u$
given by \eqref{u}. Details are therefore omitted.

Secondly, to obtain the estimates for the derivatives of $v$,
one would be tempted to apply again the Calder\'on--Zygmund theorem to the integral operators
in \eqref{equation3 x} and \eqref{equation3 t}.
Nevertheless, the set $\Sigma_\varepsilon$ appearing in the proof in Subsection \ref{subsection:cauchy}
does not correspond to the standard truncations for Calder\'on--Zygmund operators
as given by \eqref{truncation} and \eqref{conjuntitos}. 
Therefore we can not use the Calder\'on--Zygmund machinery as we did for $u$.
Instead, prove the results we will do a comparison argument with the global case of $\R^{n+1}$.
Consider the following difference
$$\bigg|\iint_{\Omega_\varepsilon }\partial_{y_iy_j}p(t,\tau,y) f(t-\tau,x-y)\chi_{0<\tau<t} \,dy\, d\tau- 
\int_{\varepsilon^2}^t \int_{\mathbb{R}^n}\partial_{y_iy_j}p(t,\tau,y) f(t-\tau,x-y) \,dy\, d\tau\bigg|.$$
We have 
$$\big|(\chi_{\Omega_\varepsilon}-\chi_{\Sigma_{\varepsilon^2}})\chi_{\tau<t}\partial_{y_iy_j}p(t,\tau,y)\big|
\le C\chi_{|y|>\varepsilon}\chi_{\tau < \varepsilon^2}e^{-\tau}\frac{e^{-|y|^2/(c\tau)}}{\tau^{n/2+1}}.$$
Since $\tau<\varepsilon^2<|y|^2$, we get $|y|^2/(c\tau)\geq C$ and therefore, for any $m>0$ there
exists $C_m>0$ such that
$$e^{-|y|^2/(c\tau)}\leq C_m\bigg(\frac{\tau}{|y|^2}\bigg)^{m/2}$$
and, in particular,
$$\frac{e^{-|y|^2/(c\tau)}}{\tau^{n/2+1}}\leq C_n\bigg(\frac{1}{|y|^2}\bigg)^{n/2+1}.$$
As a consequence,
\begin{align*}
\big|(\chi_{\Omega_\varepsilon}-\chi_{\Sigma_{\varepsilon^2}})\chi_{\tau<t}\partial_{y_iy_j}p(t,\tau,x-y)\big|
&\le C\chi_{|y|>\varepsilon}\chi_{\tau < \varepsilon^2}\bigg(\frac{\tau}{|y|^2}\bigg)^{m/2}\bigg(\frac{1}{|y|^2}\bigg)^{n/2+1} \\
&\leq C\chi_{|y|>\varepsilon}\chi_{\tau < \varepsilon^2}\frac{\varepsilon^{m}}{(|y|^2+\tau)^{m/2+n/2+1}}\\
&= C\chi_{|y|>\varepsilon}\chi_{\tau < \varepsilon^2}\frac{1}{\varepsilon^{n+2}(|y|^2/\varepsilon^2+\tau/\varepsilon^2)^{m/2+n/2+1}}\\
&=\frac{1}{\varepsilon^{n+2}}\Psi\bigg(\frac{|y|}{\varepsilon},\frac{\tau}{\varepsilon^2}\bigg),
\end{align*}
with
\begin{align*}
\Psi(|y|,\tau) &= C(|y|^2+\tau)^{-(m/2+n/2+1)}\chi_{|y| >1}\chi_{\tau<1} \\
&\leq C\chi_{|y|^2+\tau\leq1}(y,\tau)+C\chi_{|y|^2+\tau>1}(|y|^2+\tau)^{-(m/2+n/2+1)}=:\Phi(y,\tau).
\end{align*}
The function $\Phi(y,\tau)$ is decreasing, radially symmetric and integrable in $\R^{n+1}$.
Therefore
\begin{multline*}
\sup_{\varepsilon>0}\bigg|\iint_{\Omega_\varepsilon }\partial_{y_iy_j}p(t,\tau,y) f(t-\tau,x-y)\chi_{0<\tau<t} \,dy\, d\tau- 
\int_{\varepsilon^2}^t \int_{\mathbb{R}^n}\partial_{y_iy_j}p(t,\tau,y) f(t-\tau,x-y) \,dy\, d\tau\bigg|\\
\le \sup_{\varepsilon>0} \int_{\mathbb{R}^{n+1}} \frac1{\varepsilon^{n+2}}\Phi\bigg(\frac{y}{\varepsilon},
\frac{\tau}{\varepsilon^2}\bigg)|f(t-\tau,x-y)|\,dy\,d\tau,
\end{multline*}
and this last operator is bounded in $L^p(\R^{n+1}_+,w)$, $w\in PA_p(\R^{n+1}_+)$, $1<p<\infty$,
and from $L^1(\R^{n+1}_+,w)$ into weak-$L^1(\R^{n+1}_+,w)$, $w\in PA_1(\R^{n+1}_+)$.

On the other hand, to prove the boundedness in mixed norm Sobolev spaces with weights, we notice that
\begin{align*}
\big|(\chi_{\Omega_\varepsilon}-\chi_{\Sigma_{\varepsilon^2}})\chi_{\tau<t}\partial_{y_iy_j}p(t,\tau,y)\big|
&\le C\chi_{|y|>\varepsilon}\chi_{\tau < \varepsilon^2}e^{-\tau}\frac{e^{-|y|^2/(c\tau)}}{\tau^{n/2+1}} \\
&\leq C\frac{e^{-|y|^2/(2c\tau)}e^{-\varepsilon^2/(2c\tau)}}{\tau^{n/2}\tau} \\
&= C\frac{e^{-|y|^2/(2c\tau)}}{\tau^{n/2}}\cdot\frac{1}{\varepsilon^2}\phi(\tau/\varepsilon^2)
\end{align*}
where $\phi(\tau)=e^{-1/(2c\tau)}$. From this kernel estimate, and by manipulating with maximal
functions as we did for $u$, the $L^q(\R,\nu;L^p(\R^n,\omega))$ and weak-type estimates can be derived.

For the $L^q(\R,\nu;C^{\alpha}(\R^n))$ estimate, observe that, for any $x_1,x_2\in\R^n$,
\begin{align*}
\int_{\R^{n+1}_+}\frac{e^{-|y|^2/(c\tau)}}{\tau^{n/2}}&\cdot\frac{1}{\varepsilon^2}\phi(\tau/\varepsilon^2)
|f(t-\tau,x_1-y)-f(t-\tau,x_2-y)|\,dy\,d\tau\\
&\leq C|x_1-x_2|^\alpha\int_\R\frac{1}{\varepsilon^2}\phi(\tau/\varepsilon^2)[f(t-\tau,\cdot)]_{C^\alpha(\R^n)}
\bigg[\int_{\R^n}\frac{e^{-|y|^2/(c\tau)}}{\tau^{n/2}}\,dy\bigg]\,d\tau \\
&= C|x_1-x_2|^\alpha\int_\R\frac{1}{\varepsilon^2}\phi(\tau/\varepsilon^2)[f(t-\tau,\cdot)]_{C^\alpha(\R^n)}\,d\tau\\
&\leq C|x_1-x_2|^\alpha M_{\R}\big\{[f(\cdot,\cdot)]_{C^\alpha(\R^n)}\big\}(t).
\end{align*}
The boundedness properties of the Hardy--Littlewood maximal function allow to prove the desired estimates.

Now we remind that for good enough functions  the limit 
$$\lim_{\varepsilon \rightarrow 0}\bigg\{\iint_{\Omega_\varepsilon}-\iint_{\Sigma_{\varepsilon^2}}\bigg\}\,
\partial_{y_iy_j}p(t,\tau,y)f(t-\tau,x-y)\,d\tau\,dy$$
exists (see \eqref{equation2 x} and \eqref{equation3 x}). 
An application of the Banach principle of almost everywhere convergence gives the proof
of the a.e. convergence of this last limit.   On the other hand, by Theorems  \ref{thm:parabolic weights}, \ref{thm:LqSobolev}
and \ref{thm:Holder}, we have all the necessary convergence results for the limit 
$$\lim_{\varepsilon \rightarrow 0} \iint_{\Omega_\varepsilon}\,
\partial_{y_iy_j}p(t,\tau,y)f(t-\tau,x-y)\,d\tau\,dy.$$
Hence we get the last conclusion of Theorem \ref{thm:Cauchy} for $\partial_{ij}v$.

For $\partial_tv$ we can proceed similarly, details are left to the interested reader.
\end{proof}

\section*{Appendix: Proof of Theorem \ref{thm:Segovia-Torrea}}

In this section we fix a weight $w>0$ such that $w^{-1}\in A_1(\X)$. We first obtain a couple of
properties about $w^{-1}$ that will be needed in the proof of Theorem \ref{thm:Segovia-Torrea}.

Weights in $A_1(\X)$ satisfy a reverse H\"older inequality, see \cite[Theorem~1]{Calderon}.
Hence there exist constants $\varepsilon>0$ and $C>0$ such that, for any ball $B\subset\X$
and any $1\leq r\leq1+\varepsilon$,
\begin{equation}\label{eq:reverse Holder}
\begin{aligned}
\bigg(\frac{1}{\mu(B)}\int_B(w^{-1})^r\,d\mu\bigg)^{1/r}
&\leq \bigg(\frac{1}{\mu(B)}\int_B(w^{-1})^{1+\varepsilon_1}\,d\mu\bigg)^{1/(1+\varepsilon_1)} \\
&\leq \frac{C}{\mu(B)}\int_Bw^{-1}\,d\mu \leq C\inf_Bw^{-1}.
\end{aligned}
\end{equation}

From \eqref{eq:reverse Holder} we immediately have that, for any $1\leq r\leq 1+\varepsilon$,
$$\frac{1}{\mu(B)}\int_B(w^{-1})^r\,d\mu\leq C\inf_Bw^{-r},$$
which means that $w^{-r}\in A_1(\X)$. Therefore,
as $A_1(\X)\subset A_p(\X)$ for any $p\geq1$ (see \cite[Proposition~7.2(1)]{Duo}),
we conclude that
$$w^{-r}\in A_r(\X),\quad\hbox{for any}~1\leq r\leq 1+\varepsilon.$$
In the case when $1<r\leq1+\varepsilon$, this last condition reads,
\begin{equation}\label{zzz}
\bigg(\int_B w^{\frac{r}{r-1}}\,d\mu\bigg)\bigg(\int_B w^{-r}\,d\mu\bigg)^{\frac{1}{r-1}}\leq
C\mu(B)^\frac{r}{r-1}.
\end{equation}
Let $1/r+1/r'=1$. Then $\frac{r}{r-1}=r'$, $-r=\frac{r'}{1-r'}$, and $\frac{1}{r-1}=r'-1$, so \eqref{zzz} can be written as
$$\bigg(\int_B w^{r'}\,d\mu\bigg)\bigg(\int_B (w^{r'})^{\frac{1}{1-{r'}}}\,d\mu\bigg)^{r'-1}\leq C\mu(B)^{r'}.$$
We have just proved that
\begin{equation}\label{eq:Bloom}
w^{r'}\in A_{r'}(\X),\quad\hbox{for every}~1<r\leq1+\varepsilon,~1/r+1/r'=1.
\end{equation}

With \eqref{eq:reverse Holder} and \eqref{eq:Bloom} at hand
we can now present the proof of Theorem \ref{thm:Segovia-Torrea}.

\begin{proof}[Proof of Theorem \ref{thm:Segovia-Torrea}]
Let $f\in L^\infty_c(\X;E)$ such that $wf\in L^\infty(\X;E)$.
Fix any metric ball $B=B(\x_0,\delta)\subset\X$ with center at $\x_0\in\X$ and radius $\delta>0$.
We use the notation $cB=B(\x_0,c\delta)$, for $c>0$.
Fix $R>5\kappa$, where $\kappa\geq1$ is the quasi-triangle inequality constant for the quasi-metric $\rho$.
We can write
$$f=f\chi_{\kappa RB}+f\chi_{(\kappa RB)^c}=:f_1+f_2,$$
Then $Tf=Tf_1+Tf_2$.
If $\x\in B$ then $\x\notin\supp(f_2)$ and we can write
$$Tf_2(\x)=\int_{\X}K(\x,\y)f(\y)\,d\mu(\y),$$
see \eqref{CZ2}. Hence we can define
\begin{equation}\label{eq:cB}
c_B=\frac{1}{\mu(B)}\int_BTf_2(\z)\,d\mu(\z)=\frac{1}{\mu(B)}\int_B\int_{\X}K(\z,\y)f_2(\y)\,d\mu(\y)\,d\mu(\z)\in F.
\end{equation}
Now,
\begin{equation}\label{eq:principio}
\begin{aligned}
\frac{1}{\mu(B)}\int_B\|Tf-c_B\|_F\,d\mu &\leq \frac{1}{\mu(B)}\int_{B}\|Tf_1\|_F\,d\mu
+\frac{1}{\mu(B)}\int_{B}\|Tf_2-c_B\|_F\,d\mu \\
&=:I_1+I_2.
\end{aligned}
\end{equation}
We estimate $I_1$ and $I_2$ separately.

We begin with $I_1$. Pick any $1<r<1+\varepsilon$. We apply H\"older's inequality, \eqref{eq:reverse Holder},
and the fact that $T$ is bounded
from $L^{r'}(\X,v;E)$ into $L^{r'}(\X,v;F)$, for $v=w^{r'}\in A_{r'}(\X)$ to get
\begin{equation}\label{eq:f1}
\begin{aligned}
I_1 &= \frac{1}{\mu(B)}\int_{B}w^{-1}\|Tf_1\|_Fw\,d\mu \\
&\leq\bigg(\frac{1}{\mu(B)}\int_Bw^{-r}\,d\mu\bigg)^{1/r}\bigg(\frac{1}{\mu(B)}\int_B\|Tf_1\|_F^{r'}w^{r'}\,d\mu\bigg)^{1/r'} \\
&\leq C\inf_Bw^{-1}\bigg(\frac{1}{\mu(B)}\int_B\|wf_1\|_E^{r'}\,d\mu\bigg)^{1/r'} \\
&\leq C\inf_Bw^{-1}\|wf\|_{L^\infty(\X;E)}.
\end{aligned}
\end{equation}

For $I_2$, by using that $\x\notin\supp(f_2)$ and the definition of $c_B$ in \eqref{eq:cB},
\begin{align*}
I_2&=\frac{1}{\mu(B)}\int_B\bigg\|\frac{1}{\mu(B)}\int_B\int_{\X}
\big(K(\x,\y)-K(\z,\y)\big)f_2(\y)\,d\mu(\y)\,d\mu(\z)\bigg\|_Fd\mu(\x) \\
&\leq\frac{1}{\mu(B)^2}\int_B\int_B\int_{\X}\|K(\x,\y)-K(\z,\y)\|\|w(\y)f_2(\y)\|_E\,w(\y)^{-1}
\,d\mu(\y)\,d\mu(\z)\,d\mu(\x) \\
&\leq\frac{\|wf\|_{L^\infty(\X;E)}}{\mu(B)^2}\int_B\int_B
\bigg[\int_{(\kappa RB)^c}\|K(\x,\y)-K(\z,\y)\|w(\y)^{-1}\,d\mu(\y)\bigg]d\mu(\z)\,d\mu(\x).
\end{align*}

We claim that, for any $\x,\z\in B$,
\begin{equation}\label{eq:falta}
J:=\int_{(\kappa RB)^c}\|K(\x,\y)-K(\z,\y)\|w(\y)^{-1}\,d\mu(\y)\leq C\inf_{B}w^{-1}.
\end{equation}

With \eqref{eq:falta} the conclusion follows. Indeed, we have
$$I_2\leq C\inf_Bw^{-1}\|wf\|_{L^\infty(\X;E)}.$$
By plugging this last estimate and \eqref{eq:f1} into \eqref{eq:principio} we get that, for
almost every $\x\in\X$,
$$w(\x)M^\#_F(Tf)(\x)=w(\x)\sup_{B\ni\x}\frac{1}{\mu(B)}\int_B\|Tf-c_B\|_F\,d\mu
\leq C\|wf\|_{L^\infty(\X;E)},$$
where $C$ depends only on $T$, $w$ and the structure. Thus
$$\|wM^\#_F(Tf)\|_{L^\infty(\X)}\leq C\|wf\|_{L^\infty(\X;E)},$$
as desired.

For the proof of \eqref{eq:falta}, let us recall that $B=B(\x_0,\delta)$ and
let $A_k=((\kappa R)^k B)\setminus((\kappa R)^{k-1}B)$, for $k\geq2$.
For any $\x,\z\in B$ we have $\rho(\x,\z)<2\kappa\delta$. If $\y\in(\kappa RB)^c$ then
$$\rho(\y,\x_0)\leq\kappa\rho(\y,\z)+\kappa\rho(\z,\x_0)\leq \kappa\rho(\y,\z)+\kappa\delta
\leq \kappa\rho(\y,\z)+\rho(\y,\x_0)/R,$$
which gives
$$\rho(\y,\z)\geq \frac{R-1}{\kappa R}\rho(\y,\x_0).$$
Moreover, since $\kappa\geq1$,
$$\kappa R\delta\leq\rho(\x_0,\y)\leq \kappa\rho(\x_0,\z)+\kappa\rho(\z,\y)\leq
\kappa^2\delta+\kappa\rho(\y,\z),$$
so, as $R>5\kappa$, this implies that $4\kappa\delta\leq(R-\kappa)\delta\leq\rho(\y,\z)$. Hence
$$\rho(\y,\z)\geq4\kappa\delta>2\rho(\x,\z).$$
Also, if $\y\in A_k$ then $\rho(\y,\x_0)\geq (\kappa R)^{k-1}\delta$. 
Finally, observe that
$$B(\x_0,\tfrac{R-1}{4\kappa^2R}\rho(\y,\x_0))\subset B(\z,\rho(\y,\z)).$$
In fact, for $\w\in B(\x_0,\frac{R-1}{4\kappa^2R}\rho(\y,\x_0))$,
\begin{align*}
\rho(\w,\z) &\leq \kappa\rho(\w,\x_0)+\kappa\rho(\x_0,\z) \leq \frac{R-1}{4\kappa R}\rho(\y,\x_0)+\kappa\delta \\
&\leq \frac{1}{4}\rho(\y,\z)+\frac{1}{4}\rho(\y,\z)\leq\rho(\y,\z),
\end{align*}
and the conclusion follows. Then we can use 
the smoothness condition given in Definition \ref{CZ}(II.2) and H\"older's inequality
for some $1<r<1+\varepsilon$ to get
\begin{align*}
J &= \sum_{k=2}^\infty\int_{A_k}\|K(\x,\y)-K(\z,\y)\|w(\y)^{-1}\,d\mu(\y) \\
&\leq \sum_{k=2}^\infty\bigg(\int_{A_k}\frac{\rho(\x,\z)^{r'}}{\rho(\y,\z)^{r'}\mu(B(\z,\rho(\y,\z)))^{r'}}\,d\mu(\y)\bigg)^{1/r'}
\bigg(\int_{(\kappa R)^{k}B}w^{-r}\,d\mu\bigg)^{1/r} \\
&\leq C\Bigg[\sum_{k=2}^\infty(2\kappa\delta)\frac{\kappa R}{R-1}
\bigg(\int_{A_k}\frac{1}{\rho(\y,\x_0)^{r'}\mu(B(\x_0,\frac{R-1}{4\kappa^2R}\rho(\y,\x_0)))^{r'}}
\,d\mu(\y)\bigg)^{1/r'}\\
&\qquad\qquad\times\mu((\kappa R)^kB)^{1/r}\inf_{(\kappa R)^kB}w^{-1}\Bigg] \\
&\leq C_\kappa\inf_{B}w^{-1}\sum_{k=2}^\infty\delta\frac{1}{(\kappa R)^{k-1}\delta}
\frac{\mu((\kappa R)^kB)^{1/r'}}{\mu(B(\x_0,\frac{R-1}{4\kappa^2R}(\kappa R)^{k-1}\delta))}
\mu((\kappa R)^kB)^{1/r} \\
&\leq C_{\kappa}\inf_{B}w^{-1}\sum_{k=2}^\infty\frac{1}{(\kappa R)^{k-1}}
\frac{\mu((\kappa R)^kB)}{\mu(B(\x_0,(\kappa R)^{k-2}\delta))} \\
&= C_{\kappa}\inf_{B}w^{-1}\sum_{k=2}^\infty\frac{1}{(\kappa R)^{k-1}}
\frac{\mu((\kappa R)^kB)}{\mu((\kappa R)^{k-2}B)}\\
&\leq C\inf_Bw^{-1}\sum_{k=2}^\infty\frac{1}{(\kappa R)^{k-1}}=C\inf_Bw,
\end{align*}
where in the last inequality we used that
$$\mu((\kappa R)^{k}B)=\mu((\kappa R)^2(\kappa R)^{k-2}B)\leq C_d((\kappa R)^2)^\gamma\mu((\kappa R)^{k-2}B),$$
where $\gamma>0$ depends only on the doubling constant $C_d$, see Calder\'on \cite[Lemma~1]{Calderon}.
\end{proof}



\end{document}